\def\code#1{\color{black}\texttt{#1}\color{black}}
\newcommand\numberthis{\addtocounter{equation}{1}\tag{\theequation}}
\newtheorem{theorem}{Theorem}[section]
\newtheorem{conjecture}[theorem]{Conjecture}
\newtheorem{corollary}[theorem]{Corollary}
\newtheorem{definition}[theorem]{Definition}
\newtheorem{lemma}[theorem]{Lemma}
\newtheorem*{remark}{Remark}
\theoremstyle{definition}
\title{Higher Weight Generalized Dedekind Sums}
\author{\vspace{-1mm}\small Preston Tranbarger \\ \vspace{-2mm}\scriptsize\href{mailto:preston.tranbarger@rutgers.edu}{preston.tranbarger@rutgers.edu} \\ \vspace{-2mm}\scriptsize Department of Mathematics, Rutgers University \\ \vspace{-2mm}\scriptsize 110 Frelinghuysen Rd. Piscataway, NJ 08854-8019}
\date{}
\begin{document}

\maketitle

\begin{abstract}
Building upon the work of Stucker, Vennos, and Young we derive generalized Dedekind sums arising from period integrals applied to holomorphic Eisenstein series attached to pairs of primitive non-trivial Dirichlet characters. Furthermore, we explore a variety of properties of these generalized Dedekind sums: we develop a finite sum formula, demonstrate their behavior as quantum modular forms, provide a Fricke reciprocity law, and characterize analytic and arithmetic aspects of their image. Particularly, for the arithmetic aspect of the image, we generalize an existing conjecture to the higher weight case and provide significant computational evidence to support this generalized conjecture.
\end{abstract}

\section{Introduction and Basic Definitions}
    The classical Dedekind sum is well studied due to its wide range of applications within mathematics and even specific subfields of physics. For more background on classical Dedekind sums we refer the reader to \cite{rademacher-grosswald}, and for more background on how the Dedekind sum appears in other areas of study we refer the reader to \cite{atiyah}.

    Let $h$ and $k$ be coprime integers with $k>0$. The classical Dedekind sum is defined as
    $$s(h,k)=\sum_{n=1}^kB_1\left(\frac{n}{k}\right)B_1\left(\frac{hn}{k}\right),$$
    where $B_1$ is the first periodic Bernoulli polynomial which is defined as follows.
    \begin{definition}
        The periodic Bernoulli polynomials for integer $k\geq 1$ are given by the formula
        $$B_k(x)=\begin{dcases}
            \sum_{m=0}^k\sum_{n=0}^m(-1)^n\binom{m}{n}\frac{(\{x\}+n)^k}{m+1} & x\in\mathbb{R}\setminus\mathbb{Z}, \\
            0 & x\in\mathbb{Z}.
        \end{dcases}$$
    \end{definition}
    There are many generalizations of the classical Dedekind sum, and we will provide more discussion on relevant generalizations towards the end of this section; but, for the purposes of this paper we build upon the work of \cite{SVY}. As stated in Theorem $1.2$ of \cite{SVY}, given primitive non-trivial Dirichlet characters $\chi_1$ and $\chi_2$ with conductors $q_1$ and $q_2$ respectively such that $\chi_1\chi_2(-1)=(-1)^k$ and $\begin{psmallmatrix}
        a & b \\
        c & d
    \end{psmallmatrix}\in\Gamma_0(q_1q_2)$, we have
    $$S_{\chi_1,\chi_2,2}(\gamma)=\mathop{\sum\sum}_{\substack{j\bmod c \\ n\bmod q_1}}\overline{\chi_1}(n)\overline{\chi_2}(j)B_1\left(\frac{j}{c}\right)B_1\left(\frac{aj}{c}+\frac{n}{q_1}\right).$$
    Conveniently, $S_{\chi_1,\chi_2,2}$ satisfies an elegant crossed homomorphism relation as shown in Lemma $2.2$ of \cite{SVY}. Namely, for $\gamma_1,\gamma_2\in\Gamma_0(q_1q_2)$ we have that
    \begin{equation}\label{crossHomInit}S_{\chi_1,\chi_2,2}(\gamma_1\gamma_2)=S_{\chi_1,\chi_2,2}(\gamma_1)+\psi(\gamma_1)\,S_{\chi_1,\chi_2,2}(\gamma_2)\end{equation}
    where $\psi(\gamma)=\chi_1\overline{\chi_2}(d_\gamma)$ is the central character of the weight $k=2$ holomorphic Eisenstein series defined below.
    \begin{definition}\label{youngEisensteinDef}
        We define our holomorphic weight $k\geq 3$ Eisenstein series attached to primitive non-trivial Dirichlet characters such that $\chi_1\chi_2(-1)=(-1)^k$ as
        $$E_{\chi_1,\chi_2,k}(z)=\frac{(q_2/(2\pi))^k\,L(k,\chi_1\chi_2)\,\Gamma(k)}{i^{-k}\,\tau(\chi_2)}\sum_{(c,d)=1}\frac{\chi_1(c)\chi_2(d)}{(cq_2z+d)^k}.
        $$
        The central character of this series is $\psi=\chi_1\overline{\chi_2}$. For $k=2$, this sum doesn't converge absolutely, so we take the Fourier expansion in Lemma \ref{eisensteinSeries} to be our definition in this case.
    \end{definition}
    \begin{remark}
        Given a matrix $\gamma\in\Gamma_0(q_1q_2)$ this Eisenstein series has the relation under Mobius transformations given by $E_{\chi_1,\chi_2,k}(\gamma z)=\psi(\gamma)\,j(\gamma,z)^k\,E_{\chi_1,\chi_2,k}(z)$.
    \end{remark}
    % \cite{youngExplicitCalc}, Equation (3.9) and Section $4.4$
    \noindent This particular normalization of our Eisenstein series makes the Fourier expansion have simple coefficients.
    \begin{lemma}\label{eisensteinSeries}
        We have the following Fourier series for $E_{\chi_1,\chi_2,k}$:
        $$E_{\chi_1,\chi_2,k}(z)=2\sum_{1\leq N}\sum_{A\mid N}\chi_1(A)\overline{\chi_2}(N/A)(N/A)^{k-1}e(Nz)\quad\text{where}\quad e(z)=\exp(2\pi iz).$$
    \end{lemma}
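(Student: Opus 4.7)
The plan is to derive the Fourier expansion from the defining Dirichlet series by the standard Hecke recipe, adapted to the character twists. For $k=2$ there is nothing to prove since the Fourier expansion is adopted as the definition, so I would assume $k\geq 3$, where the series in Definition \ref{youngEisensteinDef} converges absolutely.

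First I would fold the $(c,d)=1$ sum onto $c\geq 1$: by the parity relation $\chi_1\chi_2(-1)=(-1)^k$, the substitution $(c,d)\leftrightarrow(-c,-d)$ multiplies both numerator and denominator by $(-1)^k$, so the two contributions agree, while the $c=0$ piece vanishes because $\chi_1$ is non-trivial and hence $\chi_1(0)=0$. This collapses the sum to $2\sum_{c\geq 1}\chi_1(c)\sum_{(c,d)=1}\chi_2(d)(cq_2z+d)^{-k}$. Next I would strip the coprimality condition by M\"obius inversion: writing $\gcd(c,d)=e$ and $d=ed'$ for $e\mid c$, the inner sum becomes
$$\sum_{e\mid c}\mu(e)\chi_2(e)\,e^{-k}\sum_{d'\in\mathbb{Z}}\frac{\chi_2(d')}{((c/e)q_2z+d')^k}.$$

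The analytic heart of the argument is a twisted Lipschitz formula for the innermost sum. Expanding $\chi_2(d')=\tau(\overline{\chi_2})^{-1}\sum_{a\bmod q_2}\overline{\chi_2}(a)\,e(ad'/q_2)$ and splitting $d'$ into residue classes modulo $q_2$, I would apply the classical identity
$$\sum_{n\in\mathbb{Z}}(n+w)^{-k}=\frac{(-2\pi i)^k}{(k-1)!}\sum_{m\geq 1}m^{k-1}e(mw),\qquad\operatorname{Im}w>0.$$
Orthogonality in $a$ forces $m\equiv -a\pmod{q_2}$, which reintroduces $\overline{\chi_2}(m)$ and yields a closed form proportional to $\sum_{m\geq 1}\overline{\chi_2}(m)m^{k-1}e(m(c/e)z)$.

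Reindexing via $C=c/e$ and $N=mC$ then converts the resulting double sum into $\sum_{N\geq 1}e(Nz)\sum_{A\mid N}\chi_1(A)\overline{\chi_2}(N/A)(N/A)^{k-1}$ with $A=C$, while the leftover M\"obius sum $\sum_{e\geq 1}\mu(e)\chi_1\chi_2(e)e^{-k}=L(k,\chi_1\chi_2)^{-1}$ exactly cancels the $L$-value in the normalization. The step I expect to be the trickiest is the final bookkeeping: verifying that the prefactors combine to precisely $2$. This requires simplifying $\Gamma(k)=(k-1)!$, collecting the powers $i^{-k}\cdot(-2\pi i)^k\cdot(q_2/(2\pi))^k$, and invoking the Gauss sum identity $\tau(\chi_2)\tau(\overline{\chi_2})=\chi_2(-1)q_2$ together with $\overline{\chi_2}(-1)=\chi_2(-1)$ (since $\chi_2(\pm 1)=\pm 1$). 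Everything else is routine manipulation of absolutely convergent series.
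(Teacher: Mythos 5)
Your plan is correct, but note that the paper does not actually prove Lemma \ref{eisensteinSeries}: it adopts the $k=2$ Fourier expansion as the definition and, for $k\geq 3$, cites the standard literature (Miyake, Diamond--Shurman, Young) with only a remark that normalizations must be checked. Your derivation is essentially the classical argument those references use, and every step is sound: folding onto $c\geq 1$ via $\chi_1\chi_2(-1)=(-1)^k$ with the $c=0$ term killed by $\chi_1(0)=0$, M\"obius removal of the coprimality condition, the Lipschitz summation formula applied after splitting $d'$ into residues modulo $q_2$ (your Gauss-sum expansion of $\chi_2(d')$ is slightly redundant --- splitting into residue classes alone gives $\sum_{r\bmod q_2}\chi_2(r)e(mr/q_2)=\overline{\chi_2}(m)\tau(\chi_2)$ directly, so the factor $\tau(\chi_2)$ cancels against the normalization without ever invoking $\tau(\chi_2)\tau(\overline{\chi_2})=\chi_2(-1)q_2$), the divisor reindexing $N=mC$, and the cancellation of $L(k,\chi_1\chi_2)$ against the M\"obius sum. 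The final bookkeeping you flag as delicate does close: the prefactor reduces to $2\,(-2\pi i)^k\,i^k/(2\pi)^k=2\,(-i)^k i^k=2$, as required. So what your writeup buys, compared with the paper's citation, is precisely the normalization check the paper's remark says is needed; it would serve as a self-contained verification that Definition \ref{youngEisensteinDef} is normalized so that the Fourier coefficients are exactly $2\sum_{A\mid N}\chi_1(A)\overline{\chi_2}(N/A)(N/A)^{k-1}$.
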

    \begin{remark}
        These definitions of the holomorphic Eisenstein series and its Fourier expansion are standard; however, many authors use different normalizations and so care is required in checking the above is consistent. Some sources include \cite[Theorem $7.1.3$]{Miyake}, \cite[Theorem $4.5.1$]{diamondShurman}, and \cite[(3.9) and Section $4.4$]{youngExplicitCalc}.
    \end{remark}
    % \cite{youngExplicitCalc}, Section $4.4$ and \cite{diamondShurman}, Theorem $4.5.1$

    The approach of this paper differs significantly from that of \cite{SVY} in the manner in which we use Eisenstein series attached to characters. Specifically, \cite{SVY} chooses to use the completed Eisenstein series attached to characters, then by separating out the holomorphic and anti-holomorphic parts they construct $S_{\chi_1,\chi_2,2}$ from the holomorphic part in a manner analogous to the Kronecker limit formula; however, in the higher weight setting it does not seem that this procedure works. As such, the focus of this paper builds upon a small but critical observation contained within Section $5$ of \cite{SVY}: this generalization of the classical Dedekind sum is exactly a period integral applied to the weight $k=2$ holomorphic Eisenstein series attached to characters. Building upon this, the main goal of this work is to evaluate this period integral applied to weight $k\geq 3$ holomorphic Eisenstein series attached to characters.
    
    To adequately define our higher weight period integrals we provide some notation. Let $V_{k-2}(\mathbb{C})$ be the vector space of degree $k-2$ homogenous polynomials in two variables having complex coefficients. 
    \begin{definition}
        For $z\in\mathcal{H}$ and $X,Y\in\mathbb{C}$, let us write the polynomial
        $$P_{k-2}(z;X,Y)=(Xz+Y)^{k-2}=\left(\begin{pmatrix}
            X & Y
        \end{pmatrix}\begin{pmatrix}
            z \\
            1
        \end{pmatrix}\right)^{k-2}.$$
        
    \end{definition}
    \noindent Note that for fixed $z$, this polynomial $P_{k-2}(z;X,Y)$ is a member of $V_{k-2}(\mathbb{C})$. Now we define our period integral in terms of our Eisenstein series and polynomials.
    \begin{definition}\label{eichlerIntegral}
        Given weight $k\geq 2$, primitive non-trivial Dirichlet characters $\chi_1$ and $\chi_2$ with conductors $q_1$ and $q_2$ respectively such that $\chi_1\chi_2(-1)=(-1)^k$, $\gamma=\begin{psmallmatrix}
            a & b \\
            c & d
        \end{psmallmatrix}\in\Gamma_0(q_1q_2)$, we define $\phi_{\chi_1,\chi_2,k}(\gamma,X,Y)$ as the period integral of $E_{\chi_1,\chi_2,k}$ against the polynomial $P_{k-2}(z;X,Y)$ with base point at $\infty$. That is,
        $$\phi_{\chi_1,\chi_2,k}(\gamma,X,Y)=\int_\infty^{\gamma\infty}E_{\chi_1,\chi_2,k}(z)\,P_{k-2}(z;X,Y)\,dz.$$
    \end{definition}
    \noindent Note that the above integral converges due to the exponential decay of $E_{\chi_1,\chi_2,k}$ at the endpoints (see Lemma \ref{eisensteinSeries}).
\subsection{Finite Sum Formula}
    From here on we implicitly impose the same restrictions on $k$, $\chi_1$, $\chi_2$, $q_1$, $q_2$, $a$, $c$ as in Definition \ref{eichlerIntegral} unless otherwise stated. Usually we can assume that $k\geq 2$, but in Section \ref{finiteSumSec} which contains the proof of Theorem \ref{mainThm}, we assume $k\geq 3$ for much of the section to get the appropriate convergence in our analysis. However, this is not a significant drawback because the $k=2$ case of Theorem \ref{mainThm} is covered in \cite{SVY} Theorem $1.2$, and we incorporate this fact into Theorem \ref{mainThm} below.
    
    \begin{definition}\label{Sdef}
        We define $S_{\chi_1,\chi_2,k}$ as:
        $$S_{\chi_1,\chi_2,k}(\gamma)=(-1)^k\tau(\overline{\chi_1})(k-1)\,\phi_{\chi_1,\chi_2,k}(\gamma,1,-a/c).$$
    \end{definition}
    
    \noindent This paper proves the following finite sum formula which generalizes Theorem $1.2$ of \cite{SVY}.
    \begin{theorem}\label{mainThm}
        For $k\geq 2$, given $\gamma=\begin{psmallmatrix}
            a & b \\
            c & d
        \end{psmallmatrix}\in\Gamma_0(q_1q_2)$ with $c>0$ and primitive non-trivial Dirichlet characters $\chi_1$ and $\chi_2$ with conductors $q_1$ and $q_2$ respectively such that $\chi_1\chi_2(-1)=(-1)^k$, we have
        $$S_{\chi_1,\chi_2,k}(\gamma)=\mathop{\sum\sum}_{\substack{j\bmod c \\ n\bmod q_1}}\overline{\chi_1}(n)\overline{\chi_2}(j)B_1\left(\frac{j}{c}\right)B_{k-1}\left(\frac{aj}{c}+\frac{n}{q_1}\right).$$
    \end{theorem}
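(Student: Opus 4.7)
The $k=2$ case is Theorem 1.2 of \cite{SVY}, so I restrict to $k\geq 3$. My plan is to verify the identity by Fourier-expanding both sides and matching the resulting Dirichlet-type series.

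I first unfold the period integral. Substituting the Fourier expansion from Lemma \ref{eisensteinSeries} into $\phi(\gamma,1,-a/c)=\int_\infty^{a/c}E_{\chi_1,\chi_2,k}(z)(z-a/c)^{k-2}\,dz$, making the substitution $w=z-a/c$, and evaluating each Fourier mode via $\int_0^\infty e^{-t}t^{k-2}\,dt=(k-2)!$ gives
\[
\phi(\gamma,1,-a/c)=-\frac{2\,i^{k-1}(k-2)!}{(2\pi)^{k-1}}\sum_{N\geq 1}\frac{a_N\,e(Na/c)}{N^{k-1}},\qquad a_N=\sum_{A\mid N}\chi_1(A)\overline{\chi_2}(N/A)(N/A)^{k-1}.
\]
Multiplying by $(-1)^k(k-1)\tau(\overline{\chi_1})$ and regrouping $N=AB$ expresses $S_{\chi_1,\chi_2,k}(\gamma)$ as an explicit constant times the absolutely convergent Dirichlet series $\sum_{A,B\geq 1}\chi_1(A)\overline{\chi_2}(B)\,e(ABa/c)/A^{k-1}$.

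For the right-hand side I expand $B_{k-1}$ via the Hurwitz Fourier series $B_{k-1}(\{x\})=-(k-1)!(2\pi i)^{-(k-1)}\sum_{m\neq 0}e(mx)/m^{k-1}$ (absolutely convergent for $k\geq 3$) and interchange sums. The $n$-sum collapses by the primitivity identity $\sum_{n\bmod q_1}\overline{\chi_1}(n)e(mn/q_1)=\chi_1(m)\tau(\overline{\chi_1})$, leaving a constant times $\sum_{m\neq 0}\chi_1(m)\,m^{-(k-1)}G(m)$, where $G(m):=\sum_{j\bmod c}\overline{\chi_2}(j)B_1(j/c)e(maj/c)$. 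The parity hypothesis $\chi_1\chi_2(-1)=(-1)^k$ together with $B_1(\{-x\})=-B_1(\{x\})$ yields $G(-m)=-\overline{\chi_2}(-1)G(m)$, so the sum over $m\neq 0$ reduces to $2\sum_{m\geq 1}$.

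The last step is to evaluate $G(m)$ and match. Exploiting $q_2\mid c$, write $c=q_2c'$ and split $j=r+q_2t$ with $r\bmod q_2$ and $t\bmod c'$; the finite geometric identity $\sum_{t=0}^{c'-1}t\,e(mat/c')=-c'/(1-e(ma/c'))$ (valid for $c'\nmid m$) yields $G(m)=-(1-e(ma/c'))^{-1}\sum_{r\bmod q_2}\overline{\chi_2}(r)e(mar/c)$ in the generic case. Expanding $1/(1-e(ma/c'))=-\sum_{B\geq 1}e(Bma/c')$ as an Abel-summed geometric series (justified because the outer weights $\chi_1(m)/m^{k-1}$ provide absolute convergence in $m$) and recognizing $J=r+q_2B$ as a parameterization of positive integers with $\overline{\chi_2}(r)=\overline{\chi_2}(J)$, the resulting sum rearranges into $-\sum_{A,B\geq 1}\chi_1(A)\overline{\chi_2}(B)e(ABa/c)/A^{k-1}$, matching Step 1 up to sign. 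The main obstacle is this last bookkeeping: tracking $\tau(\overline{\chi_1}),\tau(\overline{\chi_2})$, signs, and powers of $i$ so that the constants agree exactly, and separately handling the degenerate $c'\mid m$ case (where the geometric series degenerates and $G(m)$ acquires extra terms involving $\sum_r\overline{\chi_2}(r)re(nr/q_2)$); these are closed using $\tau(\chi_2)\tau(\overline{\chi_2})=\chi_2(-1)q_2$ and the parity condition.
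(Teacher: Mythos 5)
Your reduction of the right-hand side (Hurwitz Fourier expansion of $B_{k-1}$ plus the Gauss-sum collapse of the $n$-sum) is sound and is essentially the paper's use of Berndt's Definition \ref{berndt1} and Lemma \ref{berndt31} run in reverse; it lands exactly on the paper's intermediate expression (\ref{pickupPoint}) after symmetrization. The genuine gap is on the left-hand side, and it is precisely the analytic heart of the theorem. Your Step 1 integrates the Fourier series of $E_{\chi_1,\chi_2,k}$ term by term along a path reaching the real endpoint $a/c$. This interchange is not justified: the coefficients satisfy $a_N\asymp N^{k-1}$, so $\sum_N|a_N e(Nz)(z-a/c)^{k-2}|\gg (\mathfrak{Im}\,z)^{-2}$ near the real axis, which is not integrable, and the output series $\sum_N a_N e(Na/c)/N^{k-1}$ has bounded (not decaying) terms. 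Your subsequent claim that $\sum_{A,B\geq 1}\chi_1(A)\overline{\chi_2}(B)e(ABa/c)/A^{k-1}$ is ``absolutely convergent'' is false: the $B$-sum has unimodular terms and diverges; it only has a meaning after regularization (its Abel sum is $-\sum_{j\bmod c}\overline{\chi_2}(j)B_1(j/c)e_c(Aaj)$, as in Lemma \ref{svy33}). Likewise, replacing $1/(1-e(ma/c'))$ by the divergent series $-\sum_{B\geq 1}e(Bma/c')$ cannot be ``justified because the outer weights provide absolute convergence in $m$'': absolute convergence in $m$ does not permit substituting a divergent inner series for its Abel sum and rearranging a now non-absolutely-convergent triple sum. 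In effect both of your unproved steps are asked to meet at a divergent object, so nothing has been matched.

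This is exactly what the paper's Section \ref{finiteSumSec} machinery supplies and what your proposal omits: one keeps a regularizing parameter throughout by integrating to $\gamma z_0=a/c+iu$ (Lemma \ref{generalIntegralFormula}), shows via the twisted Poisson summation and Hurwitz-zeta bounds (Lemmas \ref{poissonSummation}, \ref{hurwitzTypeSum}, \ref{outerSumLimit}, Corollary \ref{outerSumLimitCor}) that the lower-order boundary terms carry a factor $u^{k-n-2}$ against growth only $O(u^{-1/2})$ and hence vanish, and evaluates the surviving term by dominated convergence after writing the $B$-sum in the closed bounded form of Lemma \ref{svy32} (Lemma \ref{limitInterchange}). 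Without an argument of this kind (a regularized limit with quantitative bounds, or partial summation exploiting the cancellation in $\sum_{N\leq X}a_Ne(Na/c)$), your Step 1 identity and the later geometric-series expansion remain assertions, not proofs. A small additional remark: your ``degenerate case $c'\mid m$'' needs no Gauss-sum repair at all, since $q_1\mid c'$ forces $\chi_1(m)=0$ there, an observation the paper also uses in Lemma \ref{cobDoubleSumBound}.
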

    \noindent We will prove Theorem \ref{mainThm} in Section \ref{mainThmProof}.

    Given that Theorem \ref{mainThm} generalizes \cite{SVY} Theorem $1.2$, our higher weight Dedekind sums occupy a similar space in the wider body of literature to the Dedekind sums presented in \cite{SVY}. We give a brief contextualization of our work in the broader literature. As we mentioned earlier the central procedure of \cite{SVY} is applying the Kronecker limit formula to the holomorphic part of the completed Eisenstein series attached to characters (which is a newform); this idea was a natural outgrowth of the work of Goldstein in \cite{Goldstein}. Goldstein essentially did the same thing as \cite{SVY}; however, he applied the Kronecker limit formula to the Eisenstein series attached to a cusp (which is an oldform). Apart from \cite{SVY}, Nagasaka \cite{nagasaka} presents a special case of our Dedekind sums on $\Gamma_0^0(q_1q_2)$ where $\chi=\chi_1=\chi_2$. Additionally, Dağlı and Can used a similar integral-based technique to construct a Dedekind sum in two characters \cite{DC15}; however, their integral was not a period integral and instead comprised a product of Bernoulli polynomials. Perhaps most similar in spirit is the work of \cite{modularSymbolsEisenstein} which implicitly constructs our $\phi_{\chi_1,\chi_2,k}$ function; however, this paper is not focused on the topic of Dedekind sums but rather on modular symbols. Aside from these papers, a large part of the literature on Dedekind sums is based on the work of Berndt \cite{berndt73} which uses a different Eisenstein-type series from the $E_{\chi_1,\chi_2,k}$ defined in Definiton \ref{youngEisensteinDef}. This area of the literature includes works such as \cite{CCK,meyer,sekine}.

    \subsection{Quantum Modularity}
    There is significant precedent for the idea that our higher weight Dedekind sum should behave like a quantum modular form. In the paper \cite{Zagier} where Zagier defines quantum modular forms, the first example he uses is the classical Dedekind sum $s(h,k)$. This point of course comes with the caveat that the classical Dedekind is actually not a quantum modular form, but rather serves as an illustrative example for the structure demonstrated by these quantum modular forms. Additionally, the third example of a quantum modular form that Zagier provides in \cite{Zagier} is a period integral of a cusp form; this is very similar to our generalized Dedekind sums, which are similarly defined as period integrals of Eisenstein series.

    Toward proving that $S_{\chi_1,\chi_2,k}$ is a quantum modular form, we recount the definition of a quantum modular form as provided by Zagier but specialized for our use. 

    \begin{definition}\label{slashDef}
        Given a function $f:\mathcal{H}\to\mathbb{C}$ and a matrix $\gamma\in\text{SL}_2(\mathbb{R})$, we define the weight $\kappa$ slash operator as
        $$f|_\kappa\gamma(z)=j(\gamma,z)^{-\kappa}f(\gamma z).$$
        Additionally, suppose $\mathcal{C}\subseteq\mathbb{P}^1(\mathbb{Q})$ is a set of cusps closed under the action of a congruence subgroup $\Gamma$. Then, given a function $g:\mathcal{C}\to\mathbb{C}$ and a matrix $\gamma\in\Gamma$, we define the weight $\kappa\leq 0$ slash operator as
        $$g|_\kappa\gamma(z)=j(\gamma,z)^{-\kappa}g(\gamma z).$$
    \end{definition}
    \begin{definition}[\cite{Zagier}]
        Let $\Gamma$ be group commensurable with $\text{SL}_2(\mathbb{Z})$ and let $\Gamma(\infty)\subseteq\mathbb{P}^1(\mathbb{Q})$ be the set of cusps which are $\Gamma$-equivalent to $\infty$. We say that a function $f:\Gamma(\infty)\to\mathbb{C}$ is a weight $\kappa\leq 0$ quantum modular form on $\Gamma$ if the function
        $$h_\gamma:\Gamma(\infty)\setminus\{\infty\}\to\mathbb{C},\quad \mathfrak{a}\mapsto f(\mathfrak{a})-f|_\kappa\gamma(\mathfrak{a})$$
        is continuous with respect to the real topology for every $\gamma\in\Gamma$.
    \end{definition}
    \noindent Now we define a function which gives $S_{\chi_1,\chi_2,k}$ as a function on cusps in $(\Gamma_0(q_1q_2))(\infty)$. 
    \begin{definition}\label{shatDef}
        Let $(\Gamma_0(q_1q_2))(\infty)\subseteq\mathbb{P}^1(\mathbb{Q})$ be the set of cusps which are $\Gamma_0(q_1q_2)$-equivalent to $\infty$. We define
        $$\widehat{S}_{\chi_1,\chi_2,k}:(\Gamma_0(q_1q_2))(\infty)\to\mathbb{C},\quad \mathfrak{a}\mapsto(-1)^k\tau(\overline{\chi_1})(k-1)\int_\infty^\mathfrak{a}E_{\chi_1,\chi_2,k}(z)P_{k-2}(z;1,-\mathfrak{a})\,dz$$
        when $\mathfrak{a}\neq\infty$. Otherwise, we let $\widehat{S}_{\chi_1,\chi_2,k}(\infty)=0$.
    \end{definition}
    \begin{remark}
        Note that for all $\gamma\in\Gamma_0(q_1q_2)$ we have that $\widehat{S}_{\chi_1,\chi_2,k}(\gamma\infty)=S_{\chi_1,\chi_2,k}(\gamma).$
    \end{remark}
    
    Now that we have properly defined our objects, we state the following theorem on quantum modularity.
    \begin{theorem}\label{qmfThrm}
        We have that $\widehat{S}_{\chi_1,\chi_2,k}$ is a weight $2-k$ quantum modular form on $\Gamma_1(q_1q_2)$.
    \end{theorem}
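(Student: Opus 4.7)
The plan is to show that for every $\gamma \in \Gamma_1(q_1q_2)$, the function
$$h_\gamma(\mathfrak{a}) := \widehat{S}_{\chi_1,\chi_2,k}(\mathfrak{a}) - \widehat{S}_{\chi_1,\chi_2,k}|_{2-k}\gamma(\mathfrak{a})$$
is the restriction to $(\Gamma_1(q_1q_2))(\infty) \setminus \{\infty\}$ of a polynomial in $\mathfrak{a}$ of degree at most $k-2$, and is therefore automatically continuous in the real topology. Restricting to $\Gamma_1(q_1q_2)$ is essential: it is the precise subgroup where the central character $\psi(\gamma) = \chi_1\overline{\chi_2}(d_\gamma)$ is trivial, which collapses what would otherwise be a $\psi$-twisted cocycle for $\phi_{\chi_1,\chi_2,k}$ into the clean relation required below.

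First I would establish the $V_{k-2}(\mathbb{C})$-valued cocycle
$$\phi_{\chi_1,\chi_2,k}(\gamma_1\gamma_2,X,Y) = \phi_{\chi_1,\chi_2,k}(\gamma_1,X,Y) + \psi(\gamma_1)\,\phi_{\chi_1,\chi_2,k}(\gamma_2,\,a_{\gamma_1}X+c_{\gamma_1}Y,\,b_{\gamma_1}X+d_{\gamma_1}Y)$$
by splitting $\int_\infty^{\gamma_1\gamma_2\infty} = \int_\infty^{\gamma_1\infty} + \int_{\gamma_1\infty}^{\gamma_1\gamma_2\infty}$ and changing variables $z\mapsto\gamma_1 z$ in the second piece. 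The transformation law $E_{\chi_1,\chi_2,k}(\gamma_1 z) = \psi(\gamma_1)\,j(\gamma_1,z)^k E_{\chi_1,\chi_2,k}(z)$, the homogeneity identity $(X\gamma_1 z + Y)^{k-2} = j(\gamma_1,z)^{-(k-2)}((a_{\gamma_1}X + c_{\gamma_1}Y)z + (b_{\gamma_1}X + d_{\gamma_1}Y))^{k-2}$, and the Jacobian factor $j(\gamma_1,z)^{-2}$ from $dz$ combine so that the three powers of $j(\gamma_1,z)$ telescope cleanly.

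Next, write any $\mathfrak{a} \in (\Gamma_0(q_1q_2))(\infty) \setminus \{\infty\}$ as $\delta\infty$ for some $\delta \in \Gamma_0(q_1q_2)$ and apply this cocycle to $\phi_{\chi_1,\chi_2,k}(\gamma\delta, 1, -\gamma\mathfrak{a})$. The elementary identities $a_\gamma - c_\gamma(\gamma\mathfrak{a}) = 1/j(\gamma,\mathfrak{a})$ and $b_\gamma - d_\gamma(\gamma\mathfrak{a}) = -\mathfrak{a}/j(\gamma,\mathfrak{a})$, together with homogeneity of $P_{k-2}$ and the vanishing $\psi(\gamma) = 1$ on $\Gamma_1(q_1q_2)$, yield
$$\widehat{S}_{\chi_1,\chi_2,k}(\gamma\mathfrak{a}) = (-1)^k\tau(\overline{\chi_1})(k-1)\,\phi_{\chi_1,\chi_2,k}(\gamma, 1, -\gamma\mathfrak{a}) + j(\gamma,\mathfrak{a})^{-(k-2)}\,\widehat{S}_{\chi_1,\chi_2,k}(\mathfrak{a}).$$
Multiplying through by $j(\gamma,\mathfrak{a})^{k-2}$, rearranging, and using the polynomial identity $(z - \gamma\mathfrak{a})\,j(\gamma,\mathfrak{a}) = (c_\gamma z - a_\gamma)\mathfrak{a} + (d_\gamma z - b_\gamma)$ to absorb the prefactor into the integrand, I arrive at
$$h_\gamma(\mathfrak{a}) = -(-1)^k\tau(\overline{\chi_1})(k-1)\int_\infty^{\gamma\infty}E_{\chi_1,\chi_2,k}(z)\bigl((c_\gamma z - a_\gamma)\mathfrak{a} + (d_\gamma z - b_\gamma)\bigr)^{k-2}\,dz.$$

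The right-hand side is visibly a polynomial of degree at most $k-2$ in $\mathfrak{a}$ whose coefficients are period integrals that converge absolutely by the exponential decay of $E_{\chi_1,\chi_2,k}$ at $\infty$ from Lemma \ref{eisensteinSeries}. Polynomials are continuous on all of $\mathbb{R}$, so this gives the required continuity of $h_\gamma$ on $(\Gamma_1(q_1q_2))(\infty)\setminus\{\infty\}$. The degenerate case $c_\gamma = 0$ just forces $\gamma = T^{b_\gamma}$ on $\Gamma_1(q_1q_2)$, whence $\gamma\infty = \infty$ and $h_\gamma \equiv 0$. The main obstacle will be the bookkeeping of the $j(\gamma,\mathfrak{a})^{k-2}$ factor: at rationals $\mathfrak{a}$ where $j(\gamma,\mathfrak{a}) = 0$ one has simultaneously $\gamma\mathfrak{a} = \infty$ and $\widehat{S}_{\chi_1,\chi_2,k}(\gamma\mathfrak{a}) = 0$, and the integral rewriting above is precisely what shows these two zeros combine into a well-defined polynomial value rather than an indeterminate form.
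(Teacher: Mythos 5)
Your argument is correct, and its core is the same as the paper's: reduce Theorem \ref{qmfThrm} to showing that $h_{\gamma,\chi_1,\chi_2,k}$ is a polynomial of degree at most $k-2$ in $\mathfrak{a}$ whenever $\psi(\gamma)=1$, which holds on all of $\Gamma_1(q_1q_2)$. The route to the polynomial formula differs, though. The paper's Lemma \ref{qmfLemma} changes variables directly in the integral defining $\widehat{S}_{\chi_1,\chi_2,k}|_{2-k}\gamma(\mathfrak{a})$ via Lemma \ref{integralCOV2} and splits the path at $\mathfrak{a}$, obtaining $h_{\gamma,\chi_1,\chi_2,k}(\mathfrak{a})=(-1)^k\tau(\overline{\chi_1})(k-1)\,\psi(\gamma)\,\phi_{\chi_1,\chi_2,k}(\gamma^{-1},1,-\mathfrak{a})+(1-\psi(\gamma))\,\widehat{S}_{\chi_1,\chi_2,k}(\mathfrak{a})$, an identity valid on all of $\Gamma_0(q_1q_2)$ that is reused later in Section \ref{frickeSec}. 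You instead write $\mathfrak{a}=\delta\infty$, apply the crossed-homomorphism relation for $\phi_{\chi_1,\chi_2,k}$ (the paper's Lemma \ref{crossHom}, which rests on Lemma \ref{integralCOV1}) to $\gamma\delta$, and use homogeneity of $P_{k-2}$ together with the identities $a-c\,\gamma\mathfrak{a}=1/j(\gamma,\mathfrak{a})$ and $b-d\,\gamma\mathfrak{a}=-\mathfrak{a}/j(\gamma,\mathfrak{a})$; your resulting expression $h_\gamma(\mathfrak{a})=-(-1)^k\tau(\overline{\chi_1})(k-1)\int_\infty^{\gamma\infty}E_{\chi_1,\chi_2,k}(z)\,((cz-a)\mathfrak{a}+(dz-b))^{k-2}\,dz$ is exactly the paper's polynomial written at $\gamma$ rather than $\gamma^{-1}$ (the two agree via the cocycle relation applied to $\gamma\gamma^{-1}=I$), so both proofs close the same way. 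What each buys: the paper's version retains the $(1-\psi(\gamma))$ term and hence applies to all of $\Gamma_0(q_1q_2)$, which is what feeds the Fricke reciprocity later; yours is self-contained once the cocycle is known and makes the coefficients visibly explicit period integrals over the single path from $\infty$ to $\gamma\infty$. One point to firm up: your derivation divides by $j(\gamma,\mathfrak{a})^{k-2}$, so at the one cusp $\mathfrak{a}=\gamma^{-1}\infty$ in the domain (where $\gamma\mathfrak{a}=\infty$ and the slashed term vanishes) you must verify separately that $h_\gamma(\mathfrak{a})=\widehat{S}_{\chi_1,\chi_2,k}(\mathfrak{a})$ coincides with the value of your polynomial there, namely $-(-1)^k\tau(\overline{\chi_1})(k-1)\,c^{2-k}\int_\infty^{\gamma\infty}E_{\chi_1,\chi_2,k}(z)\,dz$; this does hold (apply the cocycle to $\gamma\cdot\gamma^{-1}=I$ with $(X,Y)=(0,1/c)$), and since the cusps are dense in $\mathbb{R}$ the check is genuinely needed for continuity at that point, so it deserves a line rather than the gesture you give it. Likewise, in the $c=0$ case the vanishing $h_\gamma\equiv 0$ comes from the integral over the degenerate path from $\infty$ to $\gamma\infty=\infty$ (equivalently from Corollary \ref{ShatPeriodicity}), not merely from $\gamma\infty=\infty$.
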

    \noindent Theorem \ref{qmfThrm} will be proven in Section \ref{qmfSec}. Quantum modular forms tend to be visually appealing---as such, we have some graphs of $\widehat{S}_{\chi_1,\chi_2,k}$ in Section \ref{vizDiscussion}. Additionally, we have developed tools to produce similar graphs; these tools are available in the Github repository in Section \ref{codeSec}.

    \subsection{Arithmetic Aspects of $S_{\chi_1,\chi_2,k}(\Gamma_1(q_1q_2))$}
    Section \ref{arithmeticProps} details some computations on arithmetic properties of the image $S_{\chi_1,\chi_2,k}(\Gamma_1(q_1q_2))$ which build upon the results of Majure \cite{majorMajure} and an unpublished conjecture of De Leon and McCormick. Firstly, Majure gives us the following beautiful result.
    \begin{lemma}[\cite{majorMajure}, Theorem $1.5$]\label{majImage}
        Let $F_{\chi_1,\chi_2}$ be the smallest number field in which $\chi_1$ and $\chi_2$ take values. Then the image $S_{\chi_1,\chi_2,2}(\Gamma_1(q_1q_2))$ is a full rank lattice in $F_{\chi_1,\chi_2}$. That is,
        $$S_{\chi_1,\chi_2,2}(\Gamma_1(q_1q_2))=\bigoplus_{i=1}^n\alpha_i\,\mathbb{Z}$$
        where $\alpha_i\in F_{\chi_1,\chi_2}$ are non-zero and $n=[F_{\chi_1,\chi_2}:\mathbb{Q}]$.
    \end{lemma}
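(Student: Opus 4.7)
I split the claim into four sub-claims: (a) the image $S_{\chi_1,\chi_2,2}(\Gamma_1(q_1q_2))$ lies in $F_{\chi_1,\chi_2}$; (b) it is closed under addition; (c) it is finitely generated as a $\mathbb{Z}$-module, hence has rank at most $n$; and (d) its $\mathbb{Q}$-span fills out $F_{\chi_1,\chi_2}$. Parts (a) and (b) are formal given the material already in the excerpt. For (a), the $k=2$ case of Theorem~\ref{mainThm} (i.e.\ Theorem~1.2 of \cite{SVY}) writes $S_{\chi_1,\chi_2,2}(\gamma)$ as a $\mathbb{Q}$-linear combination of products $\overline{\chi_1}(n)\overline{\chi_2}(j)\in F_{\chi_1,\chi_2}$. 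For (b), any $\gamma\in\Gamma_1(q_1q_2)$ has $d_\gamma\equiv 1\pmod{q_1q_2}$, so $\psi(\gamma)=\chi_1\overline{\chi_2}(d_\gamma)=1$, and the crossed-homomorphism relation \eqref{crossHomInit} collapses to an honest group homomorphism $\Gamma_1(q_1q_2)\to(F_{\chi_1,\chi_2},+)$.

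Part (c) is the main obstacle, since the finite sum formula naively gives a denominator growing with $c$. My plan is to establish $S_{\chi_1,\chi_2,2}(\Gamma_1(q_1q_2))\subseteq\tfrac{1}{D}\mathcal{O}_{F_{\chi_1,\chi_2}}$ for some $D=D(\chi_1,\chi_2)$ independent of $\gamma$. I would expand each $B_1$ via its Fourier series $B_1(x)=-\tfrac{1}{2\pi i}\sum_{m\neq 0}e(mx)/m$ and swap the order of summation; the inner sums over $j\bmod c$ and $n\bmod q_1$ then become Gauss-sum expressions, and the fact that $q_2\mid c$ (automatic from $\gamma\in\Gamma_0(q_1q_2)$) forces the remaining frequency variables to lie in a single arithmetic progression, cancelling the apparent $c$-dependence. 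What is left is a convergent series in $\tau(\overline{\chi_1})$, $\tau(\overline{\chi_2})$, and $1/\pi^2$ which, by the classical closed form for $L(2,\chi)$, lies in $F_{\chi_1,\chi_2}$ with denominator bounded purely in terms of $q_1,q_2$. Alternatively, one can reach the same conclusion by unpacking the period integral in Definition \ref{Sdef} using the Fourier expansion of Lemma \ref{eisensteinSeries}, which yields the same bounded-denominator expression term-by-term. Once this containment is in hand, the image is a subgroup of a free $\mathbb{Z}$-module of rank $n$, hence itself free of rank at most $n$.

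For (d), I would leverage extra multiplicative structure. Applying \eqref{crossHomInit} to a conjugate $\gamma_0\gamma\gamma_0^{-1}$ with $\gamma_0\in\Gamma_0(q_1q_2)$, $\gamma\in\Gamma_1(q_1q_2)$, and using $S_{\chi_1,\chi_2,2}(\gamma_0\gamma_0^{-1})=0$, yields
\[
S_{\chi_1,\chi_2,2}(\gamma_0\gamma\gamma_0^{-1})=\psi(\gamma_0)\,S_{\chi_1,\chi_2,2}(\gamma).
\]
Since $\psi(\gamma_0)$ sweeps out all values of $\chi_1\overline{\chi_2}$ as $\gamma_0$ varies, the image is a $\mathbb{Z}[\psi]$-submodule of $F_{\chi_1,\chi_2}$, so its $\mathbb{Q}$-span is automatically a $\mathbb{Q}(\psi)$-subspace. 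Full rank then reduces to producing a single $\gamma\in\Gamma_1(q_1q_2)$ whose $S$-value generates $F_{\chi_1,\chi_2}$ over $\mathbb{Q}(\psi)$; the natural candidate is a matrix with $c=q_1q_2$, for which Theorem~\ref{mainThm} collapses to a short Gauss-sum expression whose nonvanishing and field-generation properties can be verified directly. Checking that this explicit value does not accidentally land in a proper subfield is the subtle step, and together with (c) is where the genuine content of the lemma lives.
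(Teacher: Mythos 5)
A preliminary remark on the comparison itself: the paper contains no proof of Lemma \ref{majImage} --- it is imported verbatim from \cite{majorMajure} (Theorem 1.5) and simply used --- so your attempt has to be judged on its own merits rather than against an argument in this paper. Parts (a) and (b) of your plan are correct and are exactly the observations the paper itself makes when using the lemma: the $k=2$ finite sum formula places the values in $F_{\chi_1,\chi_2}$, and triviality of $\psi$ on $\Gamma_1(q_1q_2)$ turns the crossed homomorphism \eqref{crossHomInit} into a genuine homomorphism, so the image is a subgroup. For (c) you are working far too hard, and the sketch you give is not convincing as written: the asserted cancellation of the $c$-dependence after expanding $B_1$ in its Fourier series and evaluating Gauss sums is claimed, not shown. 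But none of it is needed: since $S_{\chi_1,\chi_2,2}$ is a homomorphism on $\Gamma_1(q_1q_2)$, which is finitely generated (finite index in $\mathrm{SL}_2(\mathbb{Z})$ --- the paper makes exactly this remark before Conjecture \ref{wadeConj}), the image is generated by the finitely many values at a generating set, hence is a finitely generated torsion-free subgroup of the $n$-dimensional $\mathbb{Q}$-vector space $F_{\chi_1,\chi_2}$, hence free of rank at most $n$.

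The genuine gap is in (d), which is where all the content of Majure's theorem lives. Your conjugation identity $S_{\chi_1,\chi_2,2}(\gamma_0\gamma\gamma_0^{-1})=\psi(\gamma_0)\,S_{\chi_1,\chi_2,2}(\gamma)$ is correct and does make the image a $\mathbb{Z}[\psi]$-module, so its $\mathbb{Q}$-span is a $\mathbb{Q}(\psi)$-subspace $V\subseteq F_{\chi_1,\chi_2}$. However, the reduction ``full rank follows from a single $\gamma$ whose value generates $F_{\chi_1,\chi_2}$ over $\mathbb{Q}(\psi)$'' is false as stated: a $\mathbb{Q}(\psi)$-subspace containing $\alpha$ only contains the line $\mathbb{Q}(\psi)\,\alpha$; it is not closed under multiplication, so knowing $\mathbb{Q}(\psi)(\alpha)=F_{\chi_1,\chi_2}$ as fields does not force $V=F_{\chi_1,\chi_2}$ unless $[F_{\chi_1,\chi_2}:\mathbb{Q}(\psi)]=1$. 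The problem is most acute precisely in cases the paper treats, e.g.\ $\chi_1=\chi_2$, where $\psi$ is trivial, $\mathbb{Q}(\psi)=\mathbb{Q}$, and the conjugation action buys nothing at all. What is actually required is a spanning statement over all of $F_{\chi_1,\chi_2}$: for instance, since the Bernoulli factors in Theorem \ref{mainThm} are rational, $\sigma\bigl(S_{\chi_1,\chi_2,2}(\gamma)\bigr)=S_{\chi_1^\sigma,\chi_2^\sigma,2}(\gamma)$ for $\sigma\in\mathrm{Gal}(F_{\chi_1,\chi_2}/\mathbb{Q})$, and attaining rank $n$ amounts to a simultaneous nonvanishing/linear-independence statement across all Galois conjugates, which ultimately rests on nonvanishing of the relevant $L$-values; your phrase ``can be verified directly'' hides exactly this step, and no candidate matrix computation is actually carried out. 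As it stands, the lower bound on the rank --- the heart of the lemma --- is unproven.
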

    \noindent Note that in the case when $\chi_1$  and $\chi_2$ are quadratic, we have that $[F_{\chi_1,\chi_2}:\mathbb{Q}]=1$ and thus $S_{\chi_1,\chi_2,2}(\Gamma_1(q_1q_2))=\alpha\mathbb{Z}$ for some $\alpha\in\mathbb{Q}_{>0}$. Using this theorem of Majure alongside computational evidence, De Leon and McCormick conjectured the following.
    \begin{conjecture}[De Leon, McCormick]\label{wadeConj}
        Given quadratic primitive characters $\chi_1,\chi_2$ with $\chi_1\chi_2(-1)=1$ we have
        $$S_{\chi_1,\chi_2,2}(\Gamma_1(q_1q_2))=2\mathbb{Z}.$$
    \end{conjecture}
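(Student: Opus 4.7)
The plan is to combine Majure's structural result (Lemma \ref{majImage}) with explicit computation via Theorem \ref{mainThm}. Since $\chi_1$ and $\chi_2$ are quadratic, $F_{\chi_1,\chi_2}=\mathbb{Q}$, and Lemma \ref{majImage} reduces the conjecture to two separate claims: (i) $S_{\chi_1,\chi_2,2}(\gamma)\in 2\mathbb{Z}$ for every $\gamma\in\Gamma_1(q_1q_2)$, and (ii) $2\in S_{\chi_1,\chi_2,2}(\Gamma_1(q_1q_2))$.

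For (ii), I would evaluate Theorem \ref{mainThm} on explicit small-level matrices such as $\gamma=\begin{psmallmatrix}1&0\\q_1q_2&1\end{psmallmatrix}\in\Gamma_1(q_1q_2)$; the reality of $\chi_1,\chi_2$ and the parity $\chi_1\chi_2(-1)=1$ should make this evaluation tractable. If no single matrix hits $2$, the rank-one structure from Lemma \ref{majImage} guarantees that taking $\gcd$ of $S_{\chi_1,\chi_2,2}(\gamma)$ over a finite test set terminates at the generator $\alpha$, so only finitely many matrices need be checked to identify $\alpha=2$.

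The main obstacle is (i). My strategy begins with the involution $(j,n)\mapsto(c-j,q_1-n)$ on the index set of Theorem \ref{mainThm}. Using $B_1(-x)=-B_1(x)$ for $x\notin\mathbb{Z}$, the fact that $c\equiv 0\pmod{q_2}$ since $\gamma\in\Gamma_0(q_1q_2)$, and the hypothesis $\chi_1(-1)\chi_2(-1)=1$, this involution preserves the summand. Its fixed points are $j\in\{0,c/2\}$, $n\in\{0,q_1/2\}$, and each such term vanishes because either $B_1$ evaluates at an integer or at $1/2$ (both giving $0$) or a character $\chi_i$ evaluates at $0$. Pairing orbits therefore shows $S_{\chi_1,\chi_2,2}(\gamma)=2T(\gamma)$ with $T(\gamma)\in\mathbb{Q}$, establishing $S_{\chi_1,\chi_2,2}(\gamma)\in 2\mathbb{Q}$. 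The genuinely hard remaining step is to upgrade this to $T(\gamma)\in\mathbb{Z}$: the denominators $2c$ and $2cq_1$ appearing in $B_1(j/c)$ and $B_1(aj/c+n/q_1)$ must be absorbed by cancellations. I would attempt this by expanding the product of Bernoulli values, separating floor and fractional parts, and applying orthogonality of $\chi_1$ mod $q_1$ and $\chi_2$ mod $q_2$ (using $q_2\mid c$) together with classical integrality of quadratic Gauss sums. A natural warm-up case is $q_1=q_2=p$ an odd prime with $\chi_i$ the Legendre symbol, after which one would try to isolate which properties of $\chi_1,\chi_2$ the denominator-clearing argument actually requires before pushing to the general case.
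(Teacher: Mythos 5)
The statement you are trying to prove is not a theorem of the paper at all: it is recorded there as an open conjecture of De Leon and McCormick, supported only by computational evidence (verification on finite generating sets of $\Gamma_1(q_1q_2)$ for all pairs with prime $q_1<30$, $q_2<30$, using the crossed homomorphism property and the finite sum formula). The paper offers no proof, and the strongest published partial result it cites, due to \cite{KMS(y)}, gives only $S_{\chi_1,\chi_2,2}(\gamma)\in\bigl(1/(q_1,q_2)\bigr)\mathbb{Z}$, which is strictly weaker than even the containment half of the conjecture. So there is no paper proof to compare against, and your proposal should be judged as an attempted resolution of an open problem.

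As such, it has two genuine gaps. First, your step (ii) is not a proof: evaluating $S_{\chi_1,\chi_2,2}$ on finitely many explicit matrices and taking gcds identifies the generator $\alpha$ for one fixed pair $(\chi_1,\chi_2)$, which is precisely the De Leon--McCormick computation already described in the paper; it cannot establish $2\in S_{\chi_1,\chi_2,2}(\Gamma_1(q_1q_2))$ uniformly over the infinitely many pairs of quadratic characters, and no closed-form evaluation on a specific matrix family is supplied. Second, and more seriously, your step (i) does not reach $2\mathbb{Z}$. The involution $(j,n)\mapsto(c-j,q_1-n)$ does preserve the summand (using $q_2\mid c$, $B_1(-x)=-B_1(x)$, and $\chi_1\chi_2(-1)=1$), and the fixed-point terms do vanish, so you get $S_{\chi_1,\chi_2,2}(\gamma)=2T(\gamma)$ with $T(\gamma)$ a sum over orbit representatives. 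But $T(\gamma)$ is a priori only rational with denominator dividing roughly $4cq_1$, and the conclusion ``$S\in 2\mathbb{Q}$'' is vacuous since $2\mathbb{Q}=\mathbb{Q}$. The entire content of the conjecture is the integrality $T(\gamma)\in\mathbb{Z}$, which you explicitly defer to an unexecuted denominator-clearing argument via orthogonality and Gauss sum integrality. That is exactly the step that is open: the techniques in the literature (including the Fricke reciprocity route of \cite{KMS(y)} and the paper's own Theorem \ref{containmentTheorem}, which only yields containments of the shape $m\mathbb{Z}/q_1$ after finite computation on generators) fall short of it. Your write-up is therefore a reasonable reduction and research plan, but not a proof, and it would be misleading to present the involution step as ``establishing'' anything beyond the factorization $S=2T$.
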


    \noindent De Leon and McCormick provided significant computational evidence for this conjecture. Specifically, due to the crossed homomorphism property of (\ref{crossHomInit}), it suffices to compute $S_{\chi_1,\chi_2,2}$ on a generating set of $\Gamma_1(q_1q_2)$ to completely characterize the image $S_{\chi_1,\chi_2,2}(\Gamma_1(q_1q_2))$. Noting that $\Gamma_1(q_1q_2)$ is a finite index subgroup of $\text{SL}_2(\mathbb{Z})$ and that $\text{SL}_2(\mathbb{Z})$ is finitely generated, it follows that $\Gamma_1(q_1q_2)$ is finitely generated as well. Thus we only need to compute $S_{\chi_1,\chi_2,2}$ on a finite generating set to characterize the image $S_{\chi_1,\chi_2,2}(\Gamma_1(q_1q_2))$ (see \cite{majorMajure} Section $3$ for a detailed explanation). Using the method of Tranbarger and Wang in \cite{TW24}, computing $S_{\chi_1,\chi_2,2}$ on a finite generating set becomes trivial. This approach enabled De Leon and McCormick to prove Conjecture \ref{wadeConj} for all pairs of Dirichlet characters with prime $q_1<30$ and prime $q_2<30$.
    
    Naturally, we extend their work by computing a small subset of the image of $S_{\chi_1,\chi_2,k}$ for higher weight to see if it has nice arithmetic properties as well. Indeed, with the proper normalization, we find more elaborate arithmetic properties in the higher weight setting. To state the results of these computations we first define the following auxiliary function.
    % Note that in the higher weight generalization we do not yet have an analogue of Lemma \ref{majImage} which provides a descriptive structure on the image $\widetilde{S}_{\chi_1,\chi_2,k}(\Gamma_1(q_1q_2))$. Despite this, based on the significant amount of computation required to construct the tables in Section \ref{computation}, the statements postulated in Conjecture \ref{conj611} seem reasonable.
    
    \begin{definition}\label{stildeDef}
        For $\chi_1,\,\chi_2$ quadratic characters with $\gamma=\begin{psmallmatrix}
            a & b \\
            c & d
        \end{psmallmatrix}\in\Gamma_0(q_1q_2)$, let $\widetilde{S}_{\chi_1,\chi_2,k}(\gamma)=c^{k-2}\,S_{\chi_1,\chi_2,k}(\gamma).$
    \end{definition}
    \begin{remark}
        It is not obvious that including a $c^{k-2}$ factor is an appropriate normalization for revealing arithmetic properties of $S_{\chi_1,\chi_2,k}$; however, we have a few hints as to why this is this choice is correct. Note that from Theorem \ref{mainThm}, when $\chi_1$ and $\chi_2$ are quadratic we expect that $S_{\chi_1,\chi_2,k}\in\mathbb{Q}$ will have a $c^{-k}$ factor in the denominator; however, in the weight $2$ case, the conjecture of De Leon and McCormick suggests we get $c^2$ cancellation in the numerator. This led us to guess that $c^{k-2}$ was the normalizing factor in the general case, and indeed, computation confirmed this to be the case.
    \end{remark}
       
    In Section \ref{computation} we provide the full results of the following computation. Let $G_j(N)=\{\begin{psmallmatrix}
        a & b \\ c & d
    \end{psmallmatrix}\in\Gamma_1(N):1\leq a<jN,\,N\leq c<jN\}$. For each pair of primitive quadratic characters $(\chi_1,\chi_2)$ satisfying $\chi_1\chi_2(-1)=(-1)^k$ and $q_1q_2\leq 32$ with weight $2\leq k\leq 9$, we compute the largest value of $r\in\mathbb{Q}_{\geq 0}$ such that $\widetilde{S}_{\chi_1,\chi_2,k}(G_{50}(q_1q_2))\subseteq r\mathbb{Z}$. In the table below we provide some selected data from this large computation. Here we define $\chi_3,\chi_4,$ and $\chi_7$ to be the unique primitive quadratic characters modulo $3$, $4$, and $7$ respectively. We have the following values of $r$ for each pair of $(\chi_1,\chi_2)$ and $k$:
    \begin{center}
    \begin{tabular}{c|c c c c c c}
       \backslashbox{$k$}{$(\chi_1,\chi_2)$} & $(\chi_3,\chi_3)$ & $(\chi_3,\chi_4)$ & $(\chi_4,\chi_3)$ & $(\chi_4,\chi_4)$ & $(\chi_3,\chi_7)$ & $(\chi_7,\chi_3)$ \\
       \hline
       $2$ & $2$ & $2$ & $2$ & $2$ & $2$ & $2$ \\
       $4$ & $2$ & $2/3$ & $6/4$ & $6$ & $2/3$ & $6/7$ \\
       $6$ & $10/3$ & $10$ & $10/4$ & $10$ & $10$ & $10/7$ \\
       $8$ & $14$ & $14/3$ & $14/4$ & $14$ & $14/3$ & $2$
    \end{tabular}
    \end{center}
    Note that some values are not expressed in reduced form, and this is a stylistic choice to highlight the pattern outlined in Conjecture \ref{conj611} to follow. Given the large amount of data generated from the computations in Section \ref{computation}, we make the following conjecture of our own, generalizing the work of De Leon and McCormick to the higher weight setting.
    \begin{conjecture}\label{conj611}
        We conjecture the following points regarding the image $\widetilde{S}_{\chi_1,\chi_2,k}(\Gamma_1(q_1q_2))$ for pairs of quadratic characters $\chi_1$ and $\chi_2$.
        \begin{itemize}
            \item The tables in Section \ref{computation} extend from $G_{50}(q_1q_2)$ to $\Gamma_1(q_1q_2)$ with equality.
            \item For some $d$ dividing $2k-2$, either $\widetilde{S}_{\chi_1,\chi_2,k}(\Gamma_1(q_1q_2))=d\mathbb{Z}$ or $\widetilde{S}_{\chi_1,\chi_2,k}(\Gamma_1(q_1q_2))=d\mathbb{Z}/q_1$.
        \end{itemize}
    \end{conjecture}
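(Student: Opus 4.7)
The plan is to handle the two bullets separately, with both reductions resting on the crossed-homomorphism inherited from the period-integral construction. Throughout $\Gamma_1(q_1q_2)$ the central character $\psi=\chi_1\overline{\chi_2}$ is trivial, so the weight-$k$ generalization of (\ref{crossHomInit}), which one can read off from the weight $2-k$ quantum modularity of $\widehat{S}$ in Theorem \ref{qmfThrm}, should reduce to a weight $2-k$ cocycle identity on this subgroup. Writing this identity out in the $c$-entry and clearing by an appropriate power of $c$ yields a recursion expressing $\widetilde{S}(\gamma_1\gamma_2)$ in terms of $\widetilde{S}(\gamma_1)$, $\widetilde{S}(\gamma_2)$, and the integer entries of the matrices; in particular, if every member of some finite generating set of $\Gamma_1(q_1q_2)$ already sits inside $G_{50}(q_1q_2)$, then $\widetilde{S}(\Gamma_1(q_1q_2))$ is generated as an abelian group by the tabulated values.

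For the first bullet I would exhibit such a generating set explicitly. This is a purely group-theoretic computation: Farey-symbol techniques in the style of Kulkarni and Kurth--Long produce generators of any congruence subgroup whose entries are polynomially bounded in the index, and for the handful of conductors $q_1q_2\leq 32$ appearing in Section \ref{computation} one can simply verify directly that every Kurth--Long generator lies inside $G_{50}(q_1q_2)$. Combined with the cocycle identity above, this promotes each tabulated inclusion $\widetilde{S}_{\chi_1,\chi_2,k}(G_{50}(q_1q_2))\subseteq r\mathbb{Z}$ to the same inclusion on all of $\Gamma_1(q_1q_2)$, and the reverse inclusion is already witnessed by the computed values.

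For the second bullet I would substitute $\widetilde{S}=c^{k-2}S_{\chi_1,\chi_2,k}$ into the finite sum of Theorem \ref{mainThm}. The factor $c^{k-2}$ exactly clears the denominator of $B_{k-1}(aj/c+n/q_1)$ contributed by $c$, leaving only the denominator contributed by $q_1$; this is the structural origin of the $1/q_1$ branch of the conjectured dichotomy. Expanding the periodic Bernoulli polynomials and exploiting the orthogonality of $\overline{\chi_1}$ modulo $q_1$, the double sum should split into a main term proportional to the generalized Bernoulli number $B_{k-1,\chi_1\overline{\chi_2}}$ and a collection of lower-order pieces. The main term is controlled by the von Staudt--Clausen-type theorem for generalized Bernoulli numbers, whose denominator divides a quantity closely tied to $2k-2$, which is where I would extract the divisor $d\mid 2k-2$.

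The main obstacle, I expect, is pinning down exactly which of the two shapes ($d\mathbb{Z}$ versus $d\mathbb{Z}/q_1$) occurs and identifying $d$ as a precise function of $(\chi_1,\chi_2,k)$, rather than merely proving membership in $\tfrac{1}{q_1}\mathbb{Z}$. The asymmetry visible in the table, for instance between $(\chi_3,\chi_7)$ and $(\chi_7,\chi_3)$, strongly suggests that the Gauss-sum normalization $\tau(\overline{\chi_1})$ from Definition \ref{Sdef} interacts nontrivially with the generalized Bernoulli numbers through the functional equation relating $\chi_1\overline{\chi_2}$ to $\overline{\chi_1}\chi_2$; disentangling this interaction, and ruling out accidental cancellations that would shrink the denominator below $q_1$, is where I expect the real work of the conjecture to lie.
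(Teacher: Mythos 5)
The statement you are trying to prove is left as a conjecture in the paper: the authors only supply computational evidence (the tables of Section \ref{computation}) together with a one-sided partial result, Theorem \ref{containmentTheorem}, which gives a sufficient condition for the containment $\widetilde{S}_{\chi_1,\chi_2,k}(\Gamma_1(q_1q_2))\subseteq m\mathbb{Z}/q_1$ via the slash-stable polynomial space $\mathcal{P}(k;m,q_1)$ and Lagrange interpolation of the $h_{\gamma,\chi_1,\chi_2,k}$; they explicitly state that this does not establish equality, and nothing in the paper addresses the divisibility $d\mid 2k-2$. So your proposal cannot be matched against a proof in the paper, and it must be judged on its own terms, where it has two serious gaps.

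For the first bullet, your reduction ``compute $\widetilde{S}$ on a generating set inside $G_{50}(q_1q_2)$, then the cocycle identity generates the whole image as an abelian group'' only works in weight $2$, where $S_{\chi_1,\chi_2,2}$ is an honest homomorphism on $\Gamma_1(q_1q_2)$. For $k\geq 3$ the crossed homomorphism of Lemma \ref{crossHom} is valued in $V_{k-2}^{\psi}(\mathbb{C})$, and $S_{\chi_1,\chi_2,k}(\gamma)$ is the specialization of that cocycle at $(X,Y)=(1,-a/c)$, a point that depends on $\gamma$; moreover the normalization $c^{k-2}$ also changes from matrix to matrix. Consequently $\widetilde{S}(\gamma_1\gamma_2)$ is not determined by the two numbers $\widetilde{S}(\gamma_1),\widetilde{S}(\gamma_2)$, the image is not even known to be a subgroup of $\mathbb{Q}$, and values on a generating set do not generate the image. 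This is precisely why the paper must carry along the full degree-$(k-2)$ polynomials $h_{\gamma,\chi_1,\chi_2,k}$ and prove stability of $\mathcal{P}(k;m,q_1)$ under $|_{2-k}\gamma$ (Lemmas \ref{lem613} and \ref{lem614}), and why even that machinery yields only containment, never the asserted equality. For the second bullet, your key step assumes the very cancellation that is at issue: the finite sum of Theorem \ref{mainThm} naively carries a denominator of order $c^{k}$ (the paper's own remark after Definition \ref{stildeDef} notes this), so multiplying by $c^{k-2}$ does not ``exactly clear'' the $c$-denominators -- the residual $c^{2}$ cancellation is exactly the empirical phenomenon the conjecture records. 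Likewise, summing $\overline{\chi_1}(n)$ over $n\bmod q_1$ does not produce a constant multiple of a generalized Bernoulli number; as in the paper's proof of Theorem \ref{mainThm} it produces $B_{k-1,\chi_1}(ajq_1/c)$, which still depends on $a$, $j$, and $c$, so there is no von Staudt--Clausen statement available that pins the image to a lattice $d\mathbb{Z}$ or $d\mathbb{Z}/q_1$ with $d\mid 2k-2$. As written, neither bullet is close to proved, and the identification of $d$ (including the asymmetry between $(\chi_1,\chi_2)$ and $(\chi_2,\chi_1)$ that you correctly flag) remains open.
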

    \noindent We present some partial progress towards this conjecture in Section \ref{sec623}. Specifically,  we illustrate a sufficient condition for a containment implied by the first point of Conjecture \ref{conj611}.

    \begin{definition}\label{weirdPolynomialSpace}
        Let us define a subset of the polynomials $\mathbb{Q}[x]$
        $$\mathcal{P}(k;m,q)=\left\{\sum_{n=0}^{k-2}a_nx^{k-n-2}\in\mathbb{Q}[x]:q^{n+1}\,a_n\in m\mathbb{Z}\right\}.$$
    \end{definition}
    \begin{theorem}\label{containmentTheorem}
        Let $G$ be a finite generating set of $\Gamma_1(q_1q_2)$. If $h_{\gamma,\chi_1,\chi_2,k}\in\mathcal{P}(k;m,q_1)$ for all $\gamma\in G$ then we have that $\widetilde{S}_{\chi_1,\chi_2,k}(\Gamma_1(q_1q_2))\subseteq m\mathbb{Z}/q_1$.
    \end{theorem}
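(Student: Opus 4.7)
The plan is to realize $h_{\gamma,\chi_1,\chi_2,k}$ as an explicit polynomial of degree at most $k-2$ whose leading coefficient is $-\widetilde{S}_{\chi_1,\chi_2,k}(\gamma)$, derive a cocycle identity that propagates membership in $\mathcal{P}(k;m,q_1)$ from the generating set $G$ to all of $\Gamma_1(q_1q_2)$, and then read off this leading coefficient to conclude. First, set
$$R(\gamma,X) := (-1)^k\tau(\overline{\chi_1})(k-1)\int_\infty^{\gamma\infty}E_{\chi_1,\chi_2,k}(z)(z-X)^{k-2}\,dz,$$
so $R(\gamma,X)$ is a polynomial in $X$ of degree at most $k-2$ and $S_{\chi_1,\chi_2,k}(\gamma)=R(\gamma,\gamma\infty)$. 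A substitution $z=\gamma w$ in Definition \ref{shatDef} combined with the transformation law of $E_{\chi_1,\chi_2,k}$ and the algebraic identity $j(\gamma,w)(\gamma w-X)=j(\gamma^{-1},X)(w-\gamma^{-1}X)$ yields the period cocycle $R(\gamma_1\gamma_2,X)=R(\gamma_1,X)+\psi(\gamma_1)(R(\gamma_2,\cdot)|_{2-k}\gamma_1^{-1})(X)$. Since $\psi\equiv 1$ on $\Gamma_1(q_1q_2)$, a short manipulation yields the closed form $h_{\gamma,\chi_1,\chi_2,k}(\mathfrak{a})=-j(\gamma,\mathfrak{a})^{k-2}R(\gamma,\gamma\mathfrak{a})$, which is a polynomial in $\mathfrak{a}$ of degree at most $k-2$ since $j(\gamma,\mathfrak{a})^{k-2}$ clears all denominators $c\mathfrak{a}+d$. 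Expanding via $j(\gamma,\mathfrak{a})^{k-2}(\gamma\mathfrak{a})^j=(a\mathfrak{a}+b)^j(c\mathfrak{a}+d)^{k-2-j}$ shows that the coefficient of $\mathfrak{a}^{k-2}$ is $-c^{k-2}R(\gamma,a/c)=-\widetilde{S}_{\chi_1,\chi_2,k}(\gamma)$.

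Next, associativity $\widehat{S}_{\chi_1,\chi_2,k}|_{2-k}(\gamma_1\gamma_2)=(\widehat{S}_{\chi_1,\chi_2,k}|_{2-k}\gamma_1)|_{2-k}\gamma_2$ together with linearity of the slash action gives the cocycle
$$h_{\gamma_1\gamma_2,\chi_1,\chi_2,k} = h_{\gamma_2,\chi_1,\chi_2,k} + h_{\gamma_1,\chi_1,\chi_2,k}\big|_{2-k}\gamma_2,$$
and in particular $h_{\gamma^{-1},\chi_1,\chi_2,k}=-h_{\gamma,\chi_1,\chi_2,k}|_{2-k}\gamma^{-1}$. Hence, once it is established that $\mathcal{P}(k;m,q_1)$ is a $\mathbb{Z}$-submodule of $\mathbb{Q}[x]$ closed under the weight $2-k$ slash action of $\Gamma_1(q_1q_2)$, induction on word length in the finite generating set $G$ delivers $h_{\gamma,\chi_1,\chi_2,k}\in\mathcal{P}(k;m,q_1)$ for every $\gamma\in\Gamma_1(q_1q_2)$.

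For the required stability, let $f(x)=\sum_{n=0}^{k-2}a_nx^{k-n-2}$ and $\gamma=\begin{psmallmatrix}a&b\\c&d\end{psmallmatrix}\in\Gamma_1(q_1q_2)$; then
$$(f|_{2-k}\gamma)(x) = \sum_{n=0}^{k-2}a_n(ax+b)^{k-n-2}(cx+d)^n,$$
and a double binomial expansion expresses the coefficient $b_m$ of $x^{k-m-2}$ as a $\mathbb{Z}$-linear combination of terms proportional to $a_nc^{n+i-m}$, with the binomial constraints forcing $n+i-m\geq 0$ on all nonzero summands. Because $c\equiv 0\pmod{q_1q_2}$ on $\Gamma_1(q_1q_2)$, the factor $c^{n+i-m}$ contributes at least $q_1^{n+i-m}$ of $q_1$-divisibility, so $q_1^{m+1}b_m$ decomposes as a $\mathbb{Z}$-linear combination of terms of the form $q_1^i(q_1^{n+1}a_n)$, each of which lies in $m\mathbb{Z}$ by hypothesis. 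Hence $f|_{2-k}\gamma\in\mathcal{P}(k;m,q_1)$, and combining all three steps puts every $h_{\gamma,\chi_1,\chi_2,k}\in\mathcal{P}(k;m,q_1)$. Reading off its leading coefficient yields $q_1\widetilde{S}_{\chi_1,\chi_2,k}(\gamma)\in m\mathbb{Z}$, as required. The main obstacle is the bookkeeping in the stability check: one must verify that the combinatorial constraint $n+i-m\geq 0$ holds on every nonzero summand so that the identity $q_1^{m+1}\cdot q_1^{n+i-m}=q_1^i\cdot q_1^{n+1}$ is legitimate and the full $q_1$-divisibility budget is preserved in passing from the $a_n$'s to the $b_m$'s.
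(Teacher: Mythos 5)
Your proposal is correct, and its overall skeleton matches the paper's: realize $h_{\gamma,\chi_1,\chi_2,k}$ as a polynomial of degree at most $k-2$ built from the period integral, propagate membership in $\mathcal{P}(k;m,q_1)$ from the generating set to all of $\Gamma_1(q_1q_2)$ via the cocycle $h_{\gamma_1\gamma_2,\chi_1,\chi_2,k}=h_{\gamma_1,\chi_1,\chi_2,k}|_{2-k}\gamma_2+h_{\gamma_2,\chi_1,\chi_2,k}$ (Lemma \ref{qmfCrossHom}) together with closure of $\mathcal{P}(k;m,q_1)$ under the weight $2-k$ slash action (your stability check is the same computation as Lemma \ref{lem613}), and then extract $\widetilde{S}_{\chi_1,\chi_2,k}$. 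The genuine difference is the extraction step. The paper uses $\widetilde{S}_{\chi_1,\chi_2,k}(\gamma^{-1})=c^{k-2}\,h_{\gamma,\chi_1,\chi_2,k}(\gamma^{-1}\infty)$ together with Lemma \ref{lem614}, which consumes all of the constraints $q_1^{n+1}a_n\in m\mathbb{Z}$ and finishes by the bijection $\gamma\mapsto\gamma^{-1}$; you instead show, via the closed form $h_{\gamma,\chi_1,\chi_2,k}(\mathfrak{a})=-j(\gamma,\mathfrak{a})^{k-2}R(\gamma,\gamma\mathfrak{a})$ (valid since $\psi\equiv 1$ on $\Gamma_1(q_1q_2)$, and equivalent to Lemma \ref{qmfLemma} because $-j(\gamma,\mathfrak{a})^{k-2}R(\gamma,\gamma\mathfrak{a})=R(\gamma^{-1},\mathfrak{a})=(-1)^k\tau(\overline{\chi_1})(k-1)\,\phi_{\chi_1,\chi_2,k}(\gamma^{-1},1,-\mathfrak{a})$), that the coefficient of $\mathfrak{a}^{k-2}$ is exactly $-\widetilde{S}_{\chi_1,\chi_2,k}(\gamma)$, so only the top constraint $q_1a_0\in m\mathbb{Z}$ is needed at the end and no inversion bijection is required. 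Your version also makes explicit a point the paper leaves implicit, namely that generation involves inverses, which you handle by $h_{\gamma^{-1},\chi_1,\chi_2,k}=-h_{\gamma,\chi_1,\chi_2,k}|_{2-k}\gamma^{-1}$ and the $\mathbb{Z}$-module structure of $\mathcal{P}(k;m,q_1)$. I verified the key identities you invoke ($j(\gamma,w)(\gamma w-X)=j(\gamma^{-1},X)(w-\gamma^{-1}X)$, the closed form, the leading-coefficient computation, and the divisibility bookkeeping with $q_1\mid c$), and they are all sound; so this is a correct proof, marginally leaner at the final step than the paper's.
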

    \noindent Later in Section \ref{sec623}, this theorem (with the prerequisite computation) will allow us to prove that this containment implied by the first point of Conjecture \ref{conj611} holds. The procedure for this is roughly as follows. First we compute a finite generating set $G$ of $\Gamma_1(q_1q_2)$. Then for every $\gamma\in G$ we compute $h_{\gamma,\chi_1,\chi_2,k}$ over various $k$ and $(\chi_1,\chi_2)$ via Lagrange interpolation and tabulate the results. Then for fixed $k$ and $(\chi_1,\chi_2)$ verify there exists $m$ such that $h_{\gamma,\chi_1,\chi_2,k}\in\mathcal{P}(k;m,q_1)$ for all $\gamma\in G$. Theorem \ref{containmentTheorem} then shows that $\widetilde{S}_{\chi_1,\chi_2,k}(\Gamma_1(q_1q_2))\subseteq m\mathbb{Z}/q_1$. In Section \ref{codeSec} at the end of this paper, one can find the computational evidence that this procedure has been applied for every relevant combination of $k$ and $(\chi_1,\chi_2)$ exhibited in the tables of Section \ref{computation}. We provide an extension of this discussion with an explicit example towards the end of Section \ref{sec623}.

    \subsection{Fricke Reciprocity}
    Some progress on the conjecture of De Leon and McCormick has recently been made by \cite{KMS(y)}. Specifically, the authors show that when $\chi_1$ and $\chi_2$ are primitive quadratic characters with $\chi_1\chi_2(-1)=1$ one has
    $$S_{\chi_1,\chi_2,2}(\gamma)\in\left(\frac{1}{(q_1,q_2)}\right)\,  \mathbb{Z}$$
    for all $\gamma\in\Gamma_1(q_1q_2)$. One of the key results which allowed them to make this advancement was the use of Fricke reciprocity. The Fricke involution is defined as follows
    \begin{definition}\label{frickeDef}
        We define the Fricke involution $\omega_N\in\mathrm{SL}_2(\mathbb{R})$ as
        $$\omega_N=\begin{pmatrix}
            0 & -1/\sqrt{N} \\
            \sqrt{N} & 0
        \end{pmatrix}.$$
    \end{definition}
    \noindent Note that for the rest of the paper, we will write $\omega$ instead of $\omega_{q_1q_2}$ since $q_1q_2$ is implicitly described by $\chi_1$ and $\chi_2$. Stucker, Vennos, and Young showed how the Fricke involution interacts with $S_{\chi_1,\chi_2,2}$. Specifically, for $\gamma=\begin{psmallmatrix}
        a & b \\ cq_1q_2 & d
    \end{psmallmatrix}\in\Gamma_0(q_1q_2)$ and $\gamma'=\begin{psmallmatrix}
        d & -c \\ -bq_1q_2 & a
    \end{psmallmatrix}\in\Gamma_0(q_1q_2)$, per \cite{SVY} Theorem $1.3$ in combination with the functional equation of $L$-functions, one has
    \begin{equation}\label{equFrickeRep}S_{\chi_1,\chi_2,2}(\gamma)=\chi_1(-1)\,S_{\chi_2,\chi_1,2}(\gamma')+(1-\psi(\gamma))\left(\frac{\tau(\overline{\chi_1})}{\pi i}\right)L(1,\chi_1)\,L(0,\overline{\chi_2}).\end{equation}
    Analogously, in an effort to enable further research on Conjecture \ref{conj611}, we develop some reciprocity formulae involving the Fricke involution. We extend $\widehat{S}_{\chi_1,\chi_2,k}$, $h_{\gamma,\chi_1,\chi_2,k}$, and $\phi_{\chi_1,\chi_2,k}$ from $(\Gamma_0(q_1q_2))(\infty)$ to $(\Gamma_0(q_1q_2))(\infty)\cup(\omega\Gamma_0(q_1q_2))(\infty)$ in the natural way. We detail this process in the beginning of Section \ref{frickeSec}, ultimately resulting in a reciprocity formula on $\widehat{S}_{\chi_1,\chi_2,k}$.
    \begin{theorem}\label{reciprocity}
        For $\gamma=\begin{psmallmatrix}
            a & b \\ cq_1q_2 & d
        \end{psmallmatrix}\in\Gamma_0(q_1q_2)$ and $\gamma'=\begin{psmallmatrix}
            d & -c \\ -bq_1q_2 & a
        \end{psmallmatrix}\in\Gamma_0(q_1q_2)$ and $\mathfrak{a}\in((\Gamma_0(q_1q_2))(\infty)\cup(\omega\Gamma_0(q_1q_2))(\infty))\setminus\{\infty\}$ one has
        \begin{align*}
        &\widehat{S}_{\chi_1,\chi_2,k}|_{2-k}\gamma'(\mathfrak{a})-\psi(\gamma')\,\widehat{S}_{\chi_1,\chi_2,k}(\mathfrak{a}) \\
        &\qquad+(-1)^k\tau(\overline{\chi_1})(k-1)\left(\psi(\gamma')\,\phi_{\chi_1,\chi_2,k}(\gamma'^{-1},1,-\mathfrak{a})-\psi(\gamma)\,\phi_{\chi_1,\chi_2,k}|_{2-k}\omega(\gamma^{-1},1,-\mathfrak{a})\right) \\
        &=R_{\chi_1,\chi_2,k}\cdot(\tau(\overline{\chi_1})/\tau(\overline{\chi_2}))\left(\widehat{S}_{\chi_2,\chi_1,k}|_{2-k}\gamma'(\mathfrak{a})-\widehat{S}_{\chi_2,\chi_1,k}(\mathfrak{a})\right) \\
        &\qquad+(-1)^k\tau(\overline{\chi_1})(k-1)\,R_{\chi_1,\chi_2,k}\cdot\left(\phi_{\chi_2,\chi_1,k}(\omega^{-1},1,-\mathfrak{a})-\phi_{\chi_2,\chi_1,k}|_{2-k}\gamma'(\omega^{-1},1,-\mathfrak{a})\right) \\
        &\qquad+(1-\psi(\gamma))\,\widehat{S}_{\chi_1,\chi_2,k}|_{2-k}\omega(\mathfrak{a}).
    \end{align*}
    where $R_{\chi_1,\chi_2,k}=\chi_1(-1)(\tau(\chi_1)/\tau(\chi_2))(q_2/q_1)^{k/2}$.
    \end{theorem}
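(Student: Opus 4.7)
The proof combines three ingredients: an explicit Fricke transformation law for the Eisenstein series, the matrix identity $\omega^{-1}\gamma\omega=\gamma'$, and the cocycle structure of the period integrals. First, I would derive the Fricke transformation for $E_{\chi_1,\chi_2,k}$ from the Fourier expansion in Lemma \ref{eisensteinSeries} together with the functional equation of the Dirichlet $L$-function $L(s,\chi_1\chi_2)$, obtaining an identity of the shape
$$E_{\chi_1,\chi_2,k}\big|_k\omega \;=\; R_{\chi_1,\chi_2,k}\,\frac{\tau(\overline{\chi_1})}{\tau(\overline{\chi_2})}\,E_{\chi_2,\chi_1,k}.$$
This is exactly what supplies the prefactor $R_{\chi_1,\chi_2,k}\cdot(\tau(\overline{\chi_1})/\tau(\overline{\chi_2}))$ attached to the $\widehat{S}_{\chi_2,\chi_1,k}$ terms on the right-hand side of the theorem. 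A short computation using $\omega^{2}=-I$ gives the conjugation identity $\omega^{-1}\gamma\omega=\gamma'$, equivalently $\gamma\omega=\omega\gamma'$, which lets us trade paths from $\infty$ to $\gamma'\mathfrak{a}$ for paths involving the Fricke translate.

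Starting from Definition \ref{shatDef}, splitting $\int_\infty^{\gamma'\mathfrak{a}}=\int_\infty^{\gamma'\infty}+\int_{\gamma'\infty}^{\gamma'\mathfrak{a}}$ and applying the substitution $z\mapsto\gamma' z$ together with the automorphy relation $E_{\chi_1,\chi_2,k}(\gamma'z)=\psi(\gamma')\,j(\gamma',z)^{k}E_{\chi_1,\chi_2,k}(z)$ and the Möbius identity $\gamma'z-\gamma'\mathfrak{a}=(z-\mathfrak{a})/(j(\gamma',z)\,j(\gamma',\mathfrak{a}))$, I obtain the cocycle relation
$$\widehat{S}_{\chi_1,\chi_2,k}\big|_{2-k}\gamma'(\mathfrak{a})-\psi(\gamma')\,\widehat{S}_{\chi_1,\chi_2,k}(\mathfrak{a}) \;=\; -(-1)^k\tau(\overline{\chi_1})(k-1)\,\psi(\gamma')\,\phi_{\chi_1,\chi_2,k}(\gamma'^{-1},1,-\mathfrak{a}).$$
Consequently, the first three terms of the left-hand side of the theorem cancel exactly, and the left-hand side reduces to the single quantity $-(-1)^k\tau(\overline{\chi_1})(k-1)\,\psi(\gamma)\,\phi_{\chi_1,\chi_2,k}|_{2-k}\omega(\gamma^{-1},1,-\mathfrak{a})$. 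An analogous computation starting from the Fricke-twisted definitions of $\widehat{S}$ and $\phi$ introduced at the beginning of Section \ref{frickeSec} expresses this surviving term as a period integral of $E_{\chi_1,\chi_2,k}$ against $(z-\mathfrak{a})^{k-2}$ over an $\omega$-twisted contour.

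Finally, I would apply the Fricke transformation of the first step to that integral: the change of variables $z\mapsto\omega z$ converts $E_{\chi_1,\chi_2,k}|_k\omega$ into $R_{\chi_1,\chi_2,k}(\tau(\overline{\chi_1})/\tau(\overline{\chi_2}))\,E_{\chi_2,\chi_1,k}$, and the identity $\gamma\omega=\omega\gamma'$ lets me push the Fricke involution past the $\gamma$-action. The resulting expression reproduces the $\widehat{S}_{\chi_2,\chi_1,k}$ and $\phi_{\chi_2,\chi_1,k}$ contributions on the right-hand side, with the residual $(1-\psi(\gamma))\,\widehat{S}_{\chi_1,\chi_2,k}|_{2-k}\omega(\mathfrak{a})$ term capturing exactly the discrepancy between $\psi(\gamma)$ and $1$ that arises when commuting Fricke past $\gamma$. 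The main obstacle is careful bookkeeping: because $P_{k-2}(z;1,-\mathfrak{a})=(z-\mathfrak{a})^{k-2}$ has its second argument tied to the endpoint of integration, each substitution $z\mapsto\gamma z$ or $z\mapsto\omega z$ transforms the polynomial nontrivially and produces a $\phi$ correction of precisely the shape appearing in the statement. Keeping the $j(\gamma',\mathfrak{a})$ factors hidden inside the slash operators and the various $\psi$-phases in order, so that the final expression matches the theorem verbatim, is where the bulk of the calculation lives.
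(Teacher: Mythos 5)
Your overall architecture is essentially the paper's, just reorganized: the cocycle relation you state, $\widehat{S}_{\chi_1,\chi_2,k}|_{2-k}\gamma'(\mathfrak{a})-\psi(\gamma')\,\widehat{S}_{\chi_1,\chi_2,k}(\mathfrak{a})=-(-1)^k\tau(\overline{\chi_1})(k-1)\,\psi(\gamma')\,\phi_{\chi_1,\chi_2,k}(\gamma'^{-1},1,-\mathfrak{a})$, is exactly Lemma \ref{qmfLemma} applied to $\gamma'$ (and the cancellation it produces on the left-hand side is correct); your use of $\gamma\omega=\omega\gamma'$ is Lemma \ref{threeTerm}; and your final Fricke step is the content of Lemma \ref{qmfLemma2}, which rests on Lemma \ref{weisinger}.

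There is, however, a concrete error in your first ingredient. The Fricke transformation law is $E_{\chi_1,\chi_2,k}|_k\omega=R_{\chi_1,\chi_2,k}\,E_{\chi_2,\chi_1,k}$ (Lemma \ref{weisinger}), with no factor $\tau(\overline{\chi_1})/\tau(\overline{\chi_2})$. That ratio in the theorem does not come from the Eisenstein series at all: it arises because $\widehat{S}_{\chi_1,\chi_2,k}$ and $h_{\omega\gamma,\chi_1,\chi_2,k}$ are normalized by $(-1)^k\tau(\overline{\chi_1})(k-1)$ while $\widehat{S}_{\chi_2,\chi_1,k}$ is normalized by $(-1)^k\tau(\overline{\chi_2})(k-1)$, so rewriting the $\omega$-twisted period integral of $E_{\chi_2,\chi_1,k}$ in terms of $\widehat{S}_{\chi_2,\chi_1,k}$ produces the ratio exactly once (this is visible in the proof of Lemma \ref{qmfLemma2}). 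A quick sanity check that your constant cannot be right: on the right-hand side of the theorem the unnormalized $\phi_{\chi_2,\chi_1,k}$ terms carry only $R_{\chi_1,\chi_2,k}$, while the $\widehat{S}_{\chi_2,\chi_1,k}$ terms carry $R_{\chi_1,\chi_2,k}\cdot(\tau(\overline{\chi_1})/\tau(\overline{\chi_2}))$; if the transformation law itself carried the ratio, both families of terms would inherit it, and after the normalization conversion the $\widehat{S}_{\chi_2,\chi_1,k}$ terms would acquire its square. So carrying out your plan as written yields constants that do not match the statement. Once you replace your transformation law by the correct one, keep the two normalizations separate, and track the phase $\overline{\psi}(\gamma)$ that appears when pushing $\gamma$ past $\omega$ (the $R_{\chi_1,\chi_2,k}(\gamma)=\overline{\psi}(\gamma)\,R_{\chi_1,\chi_2,k}$ of Lemma \ref{qmfLemma2}), your argument becomes the paper's proof with the same three lemmas assembled in a slightly different order.
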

    \noindent In the weight $k=2$ case, since $f|_{2-k}\gamma(\mathfrak{a})=f(\gamma\mathfrak{a})$, we can extend $h_{\gamma,\chi_1,\chi_2,k}$ and $h_{\omega\gamma,\chi_1,\chi_2,k}$ to all of $(\Gamma_0(q_1q_2))(\infty)\cup(\omega\Gamma_0(q_1q_2))(\infty)$. By extension we find the above formula holds on all of $(\Gamma_0(q_1q_2))(\infty)\cup(\omega\Gamma_0(q_1q_2))(\infty)$ when $k=2$. So, in looking at the behavior at $\mathfrak{a}=\infty$ when $k=2$, we recover the reciprocity formula on $S_{\chi_1,\chi_2,2}(\gamma)$ and $S_{\chi_2,\chi_1,2}(\gamma')$ as in (\ref{equFrickeRep}). The details of this procedure are in the proof of Corollary \ref{SVYFrickeRecovery}.  

    \subsection{Analytic Aspects of $S_{\chi_1,\chi_2,k}(\Gamma_0(q_1q_2))$}

    Although this work serves as a generalization of \cite{SVY}, the use of period integrals as the definition of these generalized Dedekind sums represents a departure from the approach in \cite{SVY} as we alluded to earlier. Indeed, \cite{SVY} implicitly define $S_{\chi_1,\chi_2,2}$ as the sum
    \begin{equation}\label{basicIndepPointFormula}
        S_{\chi_1,\chi_2,2}(\gamma)=\frac{\tau(\overline{\chi_1})}{\pi i}\left(\sum_{1\leq A}\frac{\chi_1(A)}{A}\sum_{1\leq B}\overline{\chi_2}(B)\,e(AB\gamma z_1)-\psi(\gamma)\sum_{1\leq A}\frac{\chi_1(A)}{A}\sum_{1\leq B}\overline{\chi_2}(B)\,e(ABz_1)\right)
    \end{equation}
    which turns out to be independent of the choice of $z_1\in\mathcal{H}$ per Lemma 2.1 of \cite{SVY}. We recover this result from the new framework of period integrals yielding the more general independent point formula of Lemma \ref{cobformula} in Section \ref{cobproof}. Following a process similar to that outlined in \cite{cy24}, this lemma proves useful in that we can leverage this independent point formula with a specific choice of $z_1\in\mathcal{H}$ to derive the non-trivial bound on the magnitude of our generalized Dedekind sums as in (\ref{nonTrivialBoundEqu}).

    Before we state this non-trivial upper bound on $\abs{S_{\chi_1,\chi_2,k}}$ we provide a trivial bound for comparison. First, note that Lehmer gave an upper bound on the size of periodic Bernoulli polynomials in his paper \cite{lehmer}.
    $$\abs{B_k(x)}\leq\begin{cases}
        1/2 & k=1 \\
        2k!\,\zeta(k)/(2\pi)^k & k\equiv 2\bmod 4 \\
        2k!/(2\pi)^k & \text{otherwise.}
    \end{cases}$$
    We can unify these bounds with $\abs{B_k(x)}\leq (\pi^2/3)(k!/(2\pi)^k)$ for $k\geq 1$. Thus using Theorem \ref{mainThm}, by the triangle inequality and the above bound, we immediately have that
    $$\abs{S_{\chi_1,\chi_2,k}(\gamma)}\leq\sum_{\substack{j\bmod c \\ n\bmod q_1}}\abs{B_1\left(\frac{j}{c}\right)}\cdot\abs{B_{k-1}\left(\frac{aj}{c}+\frac{n}{q_1}\right)}\leq cq_1\cdot\frac{1}{2}\cdot\frac{\pi^2}{3}\left(\frac{(k-1)!}{(2\pi)^{k-1}}\right)\leq\frac{cq_1\pi^2}{6}\cdot\frac{(k-1)!}{(2\pi)^{k-1}}.$$
    Note the $O(c)$ growth of this upper bound; our new independent point formula will allow us to derive the following bound, which is generally better than this trivial bound.
     \begin{definition}\label{partialQuoDef}
        For $q\in\mathbb{Q}$, the continued fraction expansion of $q$ can be written in two ways
        $$q=[a_0;a_1,\ldots,a_n]=[a_0;a_1,\ldots a_n-1,1]$$
        where $a_0,\ldots,a_n$ depend on $q$. We let $M(q)=\max\{a_1,\ldots,a_n\}$.
    \end{definition}
    \begin{theorem}\label{nontrivialbound}
        Let $c'=c/q_2$ with $c\geq 1$. We have that
        \begin{equation}\label{nonTrivialBoundEqu}
            \abs{S_{\chi_1,\chi_2,k}(\gamma)}\ll_{k,\,q_1,\,q_2}M(a/c')\log^2c'
        \end{equation}
        where $M(q)$ is defined in Definition \ref{partialQuoDef} and $\mathcal{C}$ is a constant solely depending on $q_1$, $q_2$, and $k$. Furthermore, let
        $$L_{\chi_1,\chi_2,k}(\alpha,C)=\abs{\{(a,c):1\leq a<c\leq C,\,(a,c)=1,\,q_1q_2\mid c,\,\abs{S_{\chi_1,\chi_2,k}(\gamma)}>\alpha\log^3C\}}.$$
        We then have a bound on the number of exceptionally large values of $S_{\chi_1,\chi_2,k}$ as
        $$L_{\chi_1,\chi_2,k}(\alpha,C)\ll_{\chi_1,\chi_2,k}\frac{C^2}{\alpha}+C^2\,\frac{\log\log C}{\log C}.$$
    \end{theorem}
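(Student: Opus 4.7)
The plan is to use the independent point formula of Lemma \ref{cobformula}, which writes $S_{\chi_1,\chi_2,k}(\gamma)$ as a difference of two absolutely convergent double series evaluated at an arbitrary $z_1\in\mathcal{H}$. Since the expression is independent of $z_1$, we optimize this choice for each $\gamma$, mirroring the strategy in the weight-$2$ case of \cite{cy24}.

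For the pointwise bound, I would choose $z_1 = -d/c + i/c$, so that $\gamma z_1 = a/c + i/c$ and both $z_1$ and $\gamma z_1$ lie at height $1/c \asymp 1/c'$. Each double series in the independent point formula has the schematic form
\[
\sum_{A\geq 1}\frac{\chi_1(A)}{A}\sum_{B\geq 1}\overline{\chi_2}(B)\,Q_k(A,B,z)\,e(ABz),
\]
where $Q_k$ is a polynomial in $B$ of degree $k-2$ (with $z$-dependent, bounded coefficients) arising from the $P_{k-2}$ factor of Definition \ref{eichlerIntegral}. Since $\Im z = 1/c$, the exponential $e(ABz)$ decays like $e^{-2\pi AB/c}$, effectively truncating the sum at $AB \ll_k c\log c$. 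Summing over $A$ dyadically, the inner $B$-sum reduces (after partial summation) to an incomplete exponential sum $\sum_{B\leq N}\overline{\chi_2}(B)\,e(BAx)$ with $x\in\{a/c,\,-d/c\}$. Using the classical bound $|\sum_{B\leq N} e(B\alpha)| \ll \min(N,\|\alpha\|^{-1})$ combined with the splitting of $\chi_2$ into additive characters modulo $q_2$, the inner sum is controlled by the continued-fraction expansion of $Ax$; taking a supremum over dyadic $A$-windows replaces this by $M(a/c')$. One factor of $\log c'$ then comes from the dyadic $A$-sum, and a second from the partial-summation tails in $B$, producing the stated pointwise bound.

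The main obstacle is accommodating the $B^{k-2}$ growth of $Q_k$ within this analysis: this amplifies the effective cutoff in $B$ but is still dominated by the exponential decay thanks to our choice of $\Im z_1$. Because $|z_1|$ and $|\gamma z_1|$ remain $O(1)$, the $z$-polynomial portion of $Q_k$ contributes only to the $k$-dependence of the implicit constant, and the conductor dependence on $q_1,q_2$ enters only through the character-sum decomposition.

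For the statistical bound on $L_{\chi_1,\chi_2,k}(\alpha,C)$, the pointwise bound forces any contributing $\gamma$ to satisfy $M(a/c') \gg \alpha\log C$ (using $c'\leq C$). Counting such pairs reduces to a classical question on continued fraction statistics: by Gauss--Kuzmin, the density of fractions with a given partial quotient exceeding $T$ is $O(1/T)$, and the typical continued-fraction length is $O(\log c')$. A union bound then yields $\#\{(a,c'):M(a/c')>T\}\ll C^2\log C/T$, which at $T=\alpha\log C$ produces the main term $C^2/\alpha$. The residual term $C^2\log\log C/\log C$ arises from the smaller-$c'$ regime where the effective cutoff $\log^3 C/\log^2 c'$ on $M$ becomes tighter; the corresponding $L^1$-average over these scales yields a $\log\log C/\log C$ loss, and combining with the main contribution gives the claimed estimate.
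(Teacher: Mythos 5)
Your overall skeleton does match the paper's: both start from the independent point formula of Lemma \ref{cobformula}, specialize $z_1=(-d+i)/c$ so that $z_1$ and $\gamma z_1$ sit at height $1/c$, and then convert the resulting double sums into continued-fraction data. But the key arithmetic step is wrong as you state it. After bounding the inner $B$-sum by something of the shape $\min(N,\|Aa/c'\|^{-1})$, you claim that ``taking a supremum over dyadic $A$-windows replaces this by $M(a/c')$.'' That is false: for $A$ with $Aa\equiv\pm 1\pmod{c'}$ one has $\|Aa/c'\|^{-1}=c'$, which can vastly exceed $M(a/c')$, so no window-wise supremum gives the claimed bound. What is true, and what the paper actually uses, is the averaged Korobov inequality $\sum_{A=1}^{c'-1}\tfrac{1}{A\,\|Aa/c'\|}\ll M(a/c')\log^2c'$ (Lemma \ref{korobovBounds} and Corollary \ref{genKorobovBounds}); this summed estimate is the genuine arithmetic input and cannot be replaced by a sup. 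Two further omissions: the double sum attached to $z_1$ is governed by $\|Ad/c'\|$, not $\|Aa/c'\|$, so you also need $M(d/c')=M(a/c')+O(1)$ (Lemma \ref{partialQuotientDiff}, using $ad\equiv 1\bmod c$); and the $B^{k-n-2}$ growth is not simply ``absorbed into the constant'': at height $1/c$ the inner sum is of size roughly $(c/A)^{k-n-2}\,\|\cdot\|^{-1}$, and the bound is only rescued because the coefficients in Lemma \ref{cobformula} carry the factor $(\gamma z_1-a/c)^{k-n-2}=(i/c)^{k-n-2}$. Describing those coefficients as merely ``bounded'' hides exactly the cancellation (handled in the paper by the $(1/\Im z)^K$ factor of Lemma \ref{goofyUpperBound} against this prefactor) that keeps the final estimate free of powers of $c$.

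The counting half is also not yet a proof. The Gauss--Kuzmin plus union-bound argument is a heuristic; for rational numbers with denominators up to $C$ one needs an actual distributional statement with an explicit error term, which is Hensley's theorem (Lemma \ref{hensleyDensity}) together with Corollary \ref{georgiaDensity} of \cite{cy24}, giving $\widehat{\Phi}(\alpha,C)\ll C^2/\alpha+C^2\log\log C/\log C$. That is also where the secondary term really comes from --- the error term in Hensley's asymptotic, uniform only for $\alpha>4/\log\log C$ --- and not from a ``smaller-$c'$ regime'': for small $c'$ the induced threshold $\alpha\log^3C/\log^2c'$ on $M(a/c')$ is larger, so those pairs are harder, not easier, to be exceptional, and they contribute no such loss. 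Replacing your heuristic with the citation to Hensley and the corollary (as the paper does) repairs this half; the pointwise half needs the Korobov lemma and the $M(a/c')\leftrightarrow M(d/c')$ comparison made explicit.
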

    \noindent We will prove Theorem \ref{nontrivialbound} in Section \ref{nontrivialboundproof}. Note that both the proof of Lemma \ref{cobformula} and Theorem \ref{nontrivialbound} roughly follow the process outlined in \cite{cy24}, but in this case we have adapted for the higher weight setting.

\section{$S_{\chi_1,\chi_2,k}$ as a Finite Sum}\label{finiteSumSec}
    % Before we provide our necessary analysis, we first collect some results from the literature which we will use in the proof of Theorem \ref{mainThm}.
\subsection{Results from the Literature}
    Using the work of Berndt we define character analogues of Bernoulli polynomials.
    \begin{definition}[\cite{berndt}, Definition $1$]\label{berndt1}
        Given a primitive Dirichlet character $\chi$ with modulus $m$, for integer $k\geq 2$ we define $B_{k-1,\chi}(x)$ using the expression
        $$B_{k-1,\chi}(x)=\frac{(-i)^k\tau(\overline{\chi})(k-1)!}{im(2\pi /m)^{k-1}}\sum_{n\neq 0}\frac{\chi(n)}{n^{k-1}}e_m(nx).$$
    \end{definition}
    \noindent Furthermore, we have an expression relating Bernoulli polynomials to character analogues of Bernoulli polynomials.
    \begin{lemma}[\cite{berndt}, Theorem $3.1$]\label{berndt31}
        Given a primitive Dirichlet character $\chi$ with modulus $m$, for integer $k\geq 1$ we have the following expression
        $$B_{k,\chi}(x)=m^{k-1}\sum_{n\bmod m}\overline{\chi}(n)B_k\left(\frac{x+n}{m}\right).$$
    \end{lemma}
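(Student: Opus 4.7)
The plan is to prove Lemma \ref{berndt31} by reducing both sides to the same Fourier expansion on $\mathbb{R}/\mathbb{Z}$. The key ingredient is the standard Fourier expansion of the periodic Bernoulli polynomial, namely
$$B_k(y) = -\frac{k!}{(2\pi i)^k}\sum_{n\neq 0}\frac{e(ny)}{n^k}$$
for $k\geq 1$ and $y\in\mathbb{R}\setminus\mathbb{Z}$, which one can take as a known identity. Since the right-hand side of the lemma is a finite sum of such expressions, the strategy is simply to substitute this expansion, interchange the finite sum over $n\bmod m$ with the Fourier sum, and collapse the inner character sum using primitivity of $\chi$.

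Concretely, I would first substitute $y=(x+n)/m$ into the Fourier expansion to write
$$m^{k-1}\sum_{n\bmod m}\overline{\chi}(n)\,B_k\!\left(\frac{x+n}{m}\right)=-\frac{m^{k-1}k!}{(2\pi i)^k}\sum_{j\neq 0}\frac{e_m(jx)}{j^k}\sum_{n\bmod m}\overline{\chi}(n)\,e_m(jn).$$
Next I would invoke the Gauss sum identity $\sum_{n\bmod m}\overline{\chi}(n)\,e_m(jn)=\chi(j)\,\tau(\overline{\chi})$, which holds for every integer $j$ precisely because $\chi$ is primitive (when $(j,m)>1$ both sides vanish, and when $(j,m)=1$ this follows from the change of variables $n\mapsto j^{-1}n$). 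Substituting this back collapses the display to
$$-\frac{m^{k-1}k!\,\tau(\overline{\chi})}{(2\pi i)^k}\sum_{j\neq 0}\frac{\chi(j)}{j^k}\,e_m(jx),$$
and a direct bookkeeping of the constants, using $i^{-k}=(-1)^k i^k$ and $(2\pi i)^k=(2\pi)^k i^k$, matches this against the prefactor in Definition \ref{berndt1} (with $k-1$ replaced by $k$) to produce exactly $B_{k,\chi}(x)$.

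The main technical subtlety will be the case $k=1$, where the Bernoulli Fourier series converges only conditionally and the sum over $j$ in Definition \ref{berndt1} must be interpreted symmetrically. This can be handled either by restricting to $y\in\mathbb{R}\setminus\mathbb{Z}$ and appealing to the standard Abel-summation argument for the classical sawtooth Fourier series, or by observing that both sides of the identity are well-defined as symmetric limits $\lim_{N\to\infty}\sum_{0<|j|\leq N}$ and the interchange of the finite sum $n\bmod m$ with this truncated sum is trivially justified. Apart from this convergence point and the careful tracking of powers of $i$ and $m$, the argument is essentially algebraic.
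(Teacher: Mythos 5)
The paper does not actually prove this lemma --- it is imported verbatim from Berndt (his Theorem 3.1), so there is no in-paper argument to compare against; your proposal supplies a proof where the paper only cites. Your derivation is correct and is the natural one given that Definition \ref{berndt1} defines $B_{k,\chi}$ by its Fourier-type series: substituting the expansion $B_k(y)=-\frac{k!}{(2\pi i)^k}\sum_{j\neq 0}e(jy)/j^k$, collapsing $\sum_{n\bmod m}\overline{\chi}(n)e_m(jn)=\chi(j)\tau(\overline{\chi})$ (valid for \emph{all} $j$ exactly because $\chi$ is primitive, including $(j,m)>1$ where both sides vanish), and simplifying $(-i)^{k+2}(2\pi)^{-k}=-(2\pi i)^{-k}$ reproduces precisely the constant $\frac{(-i)^{k+1}\tau(\overline{\chi})\,k!}{im(2\pi/m)^{k}}$ in Definition \ref{berndt1} with the index shifted, so the bookkeeping checks out; your symmetric-limit treatment of the conditionally convergent $k=1$ case is also the right fix. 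The only caveat worth recording is not about your argument but about conventions: your proof establishes the identity for the standard periodic Bernoulli function (whose value at integer arguments is the Bernoulli number for $k\geq 2$), which is the convention Berndt uses, whereas the paper's Definition 1.1 sets $B_k(x)=0$ for $x\in\mathbb{Z}$ for all $k$; at arguments $(x+n)/m\in\mathbb{Z}$ with $\chi(n)\neq 0$ the finite-sum side as literally defined in this paper would then differ, so the identity should be read with the standard convention (or at non-integral arguments), a discrepancy inherited from the paper rather than introduced by you.
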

    Berndt also provides us with the following character analogue of Poisson summation.
    \begin{theorem}[\cite{berndt}, Theorem $2.3$]\label{berndt23}
        Given a primitive Dirichlet character $\chi$ with conductor $q$ and a function $f$ of bounded variation on $[0,\infty)$ we have that
        $$\sum_{1\leq n}\chi(n)f(n)=\frac{2\tau(\chi)}{q}\sum_{v\in\mathbb{Z}}\overline{\chi}(v)\int_0^\infty f(t)\,e^{2\pi ivt/q}\,dt.$$
    \end{theorem}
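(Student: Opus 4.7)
The plan is to prove this character analogue of Poisson summation by decomposing the sum according to residue classes modulo $q$, then applying the ordinary Poisson summation formula to each arithmetic progression, and finally collapsing the resulting double sum using the Gauss sum identity.

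First I would extend $f$ by zero to all of $\mathbb{R}$, calling the extension $\tilde{f}$. Because $\chi$ has period $q$ and $\chi(n)=0$ whenever $\gcd(n,q)>1$ (in particular $\chi(0)=0$), the left-hand side can be rewritten as
$$\sum_{n\geq 1}\chi(n)f(n)=\sum_{n\in\mathbb{Z}}\chi(n)\tilde{f}(n)=\sum_{r\bmod q}\chi(r)\sum_{m\in\mathbb{Z}}\tilde{f}(qm+r).$$
To each inner sum I would apply the scaled Poisson summation formula
$$\sum_{m\in\mathbb{Z}}\tilde{f}(qm+r)=\frac{1}{q}\sum_{v\in\mathbb{Z}}\widehat{\tilde{f}}(v/q)\,e^{2\pi ivr/q},$$
which is justified under the bounded-variation hypothesis on $f$ (standard one-dimensional Poisson summation requires only enough decay and regularity to ensure the interchange of sum and integral, and bounded variation on $[0,\infty)$ together with sufficient decay at infinity suffices).

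Next, I would interchange the order of summation over $r$ and $v$, pulling the $r$-sum inside, and recognize the Gauss sum identity for primitive $\chi$:
$$\sum_{r\bmod q}\chi(r)\,e^{2\pi irv/q}=\overline{\chi}(v)\,\tau(\chi).$$
This collapses the $r$-sum and yields an expression of the form $\frac{\tau(\chi)}{q}\sum_{v}\overline{\chi}(v)\,\widehat{\tilde{f}}(v/q)$ where $\widehat{\tilde{f}}(v/q)=\int_0^\infty f(t)e^{-2\pi ivt/q}\,dt$. Finally I would pass to the form stated in the theorem by reindexing $v\mapsto -v$ (using $\overline{\chi}(-v)=\overline{\chi}(-1)\overline{\chi}(v)$ to flip the sign in the exponential), and absorb the resulting constants to match the $\frac{2\tau(\chi)}{q}$ normalization, which arises from pairing the $v$ and $-v$ contributions via the evenness/oddness of $\chi$ in Berndt's symmetric convention.

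The main obstacle I expect is purely technical: verifying that Poisson summation applies under the weak regularity hypothesis of bounded variation on $[0,\infty)$, rather than Schwartz-class regularity. One would need to approximate $f$ by smooth compactly supported functions and take limits, using the bounded variation hypothesis to ensure uniform control on the tails and at any jump discontinuities of $\tilde{f}$ at $t=0$ (the $\chi(0)=0$ cancellation is what prevents the boundary term from causing trouble). Once this convergence issue is handled, the rest of the argument is a clean application of Fourier duality combined with the Gauss sum identity.
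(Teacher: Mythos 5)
Your overall strategy---extend $f$ by zero, split into residue classes modulo $q$, apply classical Poisson summation to each progression, and collapse the $r$-sum with the Gauss-sum identity $\sum_{r\bmod q}\chi(r)e^{2\pi i rv/q}=\overline{\chi}(v)\,\tau(\chi)$ (valid for all $v$ precisely because $\chi$ is primitive)---is the standard route to Berndt's character Poisson formula; the paper itself gives no proof to compare against, since it only cites Berndt. The technical caveats you raise (Poisson summation for bounded-variation data, symmetric partial sums in $v$, interchanging the finite $r$-sum with the conditionally convergent $v$-sum) are real but routine.

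The genuine gap is your final normalization step. Carried out honestly, your computation yields
$$\sum_{1\leq n}\chi(n)f(n)=\frac{\tau(\chi)}{q}\sum_{v\in\mathbb{Z}}\overline{\chi}(v)\int_0^\infty f(t)\,e^{-2\pi i v t/q}\,dt=\frac{\chi(-1)\,\tau(\chi)}{q}\sum_{v\in\mathbb{Z}}\overline{\chi}(v)\int_0^\infty f(t)\,e^{2\pi i v t/q}\,dt,$$
because the reindexing $v\mapsto-v$ produces only the factor $\overline{\chi}(-1)=\chi(-1)=\pm1$. There is no mechanism by which ``pairing the $v$ and $-v$ contributions'' creates a factor $2$ while the sum still runs over all of $\mathbb{Z}$: the $2$ belongs to the folded, one-sided forms of Berndt's theorem, namely $\frac{2\tau(\chi)}{q}\sum_{v\geq1}\overline{\chi}(v)\int_0^\infty f(t)\cos(2\pi vt/q)\,dt$ for even $\chi$ and $\frac{-2i\tau(\chi)}{q}\sum_{v\geq1}\overline{\chi}(v)\int_0^\infty f(t)\sin(2\pi vt/q)\,dt$ for odd $\chi$, where halving the range of summation is exactly what doubles the coefficient. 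So ``absorbing the resulting constants'' is not a legitimate move, and what your argument actually proves differs from the displayed statement by the factor $2\chi(-1)$. A concrete check: take $q=4$, $\chi$ the odd character mod $4$, $f(t)=e^{-st}$; the left side is $1/(2\cosh s)$, while the displayed right side evaluates to $-1/\cosh s$. In other words, with the standard normalization $\tau(\chi)=\sum_{n\bmod q}\chi(n)e^{2\pi i n/q}$ used elsewhere in this paper, the bilateral exponential identity carries the constant $\chi(-1)\tau(\chi)/q$, not $2\tau(\chi)/q$; this discrepancy is harmless downstream because Theorem \ref{berndt23} enters only through Lemma \ref{poissonSummation} inside absolute-value estimates (Lemma \ref{outerSumLimit}), but a proof of the statement exactly as quoted cannot be completed, and your write-up should have surfaced the mismatch rather than waving it into the constant.
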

    
    Finally, we use the work of Stucker, Vennos, and Young to help simplify a particular sum in Section \ref{analysisSection}.
    \begin{lemma}[\cite{SVY}, Lemma $3.2$]\label{svy32}
        We have that
        $$\sum_{1\leq B}\overline{\chi_2}(B)e^{2\pi AB(ia/c-u)}=\sum_{j\bmod c}\overline{\chi_2}(j)e_c(Aaj)\left(\frac{e(Aiuj)-1}{1-e(Aiuc)}\right)\quad\text{where}\quad e_c(z)=e(z/c).$$
    \end{lemma}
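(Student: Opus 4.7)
The identity is a character-weighted geometric-series computation, whose essential input is that $\gamma\in\Gamma_0(q_1q_2)$ forces $q_2\mid c$, so that $\overline{\chi_2}$ is periodic modulo $c$ (not merely modulo $q_2$). The plan is therefore to split the sum over $B\geq 1$ into residue classes modulo $c$ and collapse each class to a geometric series.

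Concretely, I would write each $B\geq 1$ as $B=mc+j$ with $j\in\{0,1,\ldots,c-1\}$ and $m\geq 0$; the missing $(m,j)=(0,0)$ term would in any case vanish since $\overline{\chi_2}(0)=0$. Using $Aa\in\mathbb{Z}$, the exponential factors cleanly as
$$e^{2\pi A(mc+j)(ia/c-u)}=e^{-2\pi Amcu}\cdot e_c(Aaj)\cdot e(Aiuj),$$
after which the inner sum $\sum_{m\geq 0}e^{-2\pi Amcu}$ is a geometric series evaluating to $1/(1-e(Aiuc))$. Combining these pieces gives
$$\sum_{1\leq B}\overline{\chi_2}(B)\,e^{2\pi AB(ia/c-u)}=\frac{1}{1-e(Aiuc)}\sum_{j\bmod c}\overline{\chi_2}(j)\,e_c(Aaj)\,e(Aiuj),$$
which already has the desired denominator and Gauss-sum-like structure.

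To convert the numerator $e(Aiuj)$ into the form $e(Aiuj)-1$ that appears in the lemma, I would exploit the vanishing $\sum_{j\bmod c}\overline{\chi_2}(j)=0$, which holds because $\overline{\chi_2}$ is a non-trivial character modulo $q_2$ extended periodically modulo $c$, together with the analogous Gauss-type vanishing $\sum_{j\bmod c}\overline{\chi_2}(j)\,e_c(Aaj)=0$ in the range of $A$ relevant for the application in Section \ref{analysisSection}. Either nullity lets one subtract a vanishing constant from $e(Aiuj)$ inside the sum without changing its value, producing the $(e(Aiuj)-1)$ factor claimed.

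The main obstacle, and in fact the only nontrivial point, is checking the Gauss-type vanishing condition in the final step: this is handled by regrouping $j\bmod c$ into cosets of $q_2\mathbb{Z}/c\mathbb{Z}$, writing $j=lq_2+n$ with $l\in\{0,\ldots,c/q_2-1\}$ and $n\in\{0,\ldots,q_2-1\}$, so that the inner sum over $l$ either forces a mod-$c/q_2$ congruence on $A$ or vanishes outright, reducing the remaining sum in $n$ to a classical Gauss sum attached to the primitive character $\overline{\chi_2}$.
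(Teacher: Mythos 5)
This lemma is imported from \cite{SVY} (their Lemma $3.2$); the present paper gives no proof of it, so I am comparing your argument with the standard derivation. In substance your plan is that derivation: write $B=mc+j$, use $q_2\mid c$ to make $\overline{\chi_2}$ periodic modulo $c$, factor the exponential as you do, and sum the geometric series in $m$. That computation yields the right-hand side with numerator $e(Aiuj)$ rather than $e(Aiuj)-1$, and the two displayed expressions differ exactly by $\bigl(\sum_{j\bmod c}\overline{\chi_2}(j)\,e_c(Aaj)\bigr)/(1-e(Aiuc))$, so the whole lemma rests on the vanishing of that twisted sum.

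One sentence of your write-up is wrong as stated: the unweighted vanishing $\sum_{j\bmod c}\overline{\chi_2}(j)=0$ is \emph{not} an alternative route to inserting the $-1$, because the constant you want to subtract is weighted by $e_c(Aaj)$; only the Gauss-type vanishing $\sum_{j\bmod c}\overline{\chi_2}(j)\,e_c(Aaj)=0$ does the job, and it is not unconditional. Writing $j=n+q_2l$ as you propose, the sum factors as $\bigl(\sum_{n\bmod q_2}\overline{\chi_2}(n)\,e_c(Aan)\bigr)\bigl(\sum_{l\bmod c/q_2}e_{c/q_2}(Aal)\bigr)$; since $(a,c)=1$ the second factor vanishes precisely when $(c/q_2)\nmid A$, while for $(c/q_2)\mid A$ with $A/(c/q_2)$ coprime to $q_2$ the expression is a nonzero Gauss sum and the identity of the lemma actually fails (e.g.\ $c=q_2$, $A=1$ produces a discrepancy $\tau(\overline{\chi_2})/(1-e(Aiuc))$). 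So the restriction on $A$ --- in the application $\chi_1(A)\neq 0$, hence $(A,q_1)=1$, and $q_1\mid c/q_2$ forces $(c/q_2)\nmid A$ --- is a genuine hypothesis the statement needs, not merely ``the relevant range''; state it explicitly and delete the ``either nullity'' phrasing. You should also fix the representatives $j\in\{0,\ldots,c-1\}$ once and for all, since $e(Aiuj)$ is not periodic in $j$ modulo $c$. With those repairs your proof is complete and coincides with the intended one.
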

    \begin{lemma}[\cite{SVY}, Lemma $3.3$]\label{svy33}
        We have that
        $$\lim_{u\to 0^+}\sum_{1\leq B}\overline{\chi_2}(B)e^{2\pi AB(ia/c-u)}=-\sum_{j\bmod c}\overline{\chi_2}(j)B_1\left(\frac{j}{c}\right)e_c(Aaj).$$
    \end{lemma}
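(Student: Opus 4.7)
The plan is to deduce this limit identity directly from Lemma~\ref{svy32}: interchange the $u \to 0^+$ limit with the finite sum over $j \bmod c$, evaluate an elementary one-variable limit for each summand, then reconcile the resulting $-j/c$ with the target $-B_1(j/c)$. Since the sum on the right-hand side of Lemma~\ref{svy32} has only $c$ terms, interchanging is immediate. For each fixed $j$, L'Hopital's rule (or Taylor expansion of numerator and denominator) gives
$$\lim_{u \to 0^+} \frac{e(Aiuj) - 1}{1 - e(Aiuc)} = \lim_{u \to 0^+} \frac{e^{-2\pi Auj} - 1}{1 - e^{-2\pi Auc}} = -\frac{j}{c},$$
and combining yields
$$\lim_{u \to 0^+} \sum_{B \geq 1} \overline{\chi_2}(B)\, e^{2\pi AB(ia/c - u)} = -\sum_{j \bmod c} \overline{\chi_2}(j)\, e_c(Aaj) \cdot \frac{j}{c}.$$

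To upgrade $j/c$ to $B_1(j/c)$, observe that $B_1(j/c) = j/c - 1/2$ for $j \in \{1, \ldots, c-1\}$, while the $j \equiv 0$ representative contributes zero on both sides because $\overline{\chi_2}(0) = 0$. Hence the discrepancy between the expression just computed and the claimed right-hand side is exactly $-\tfrac{1}{2} \sum_{j \bmod c} \overline{\chi_2}(j)\, e_c(Aaj)$, and the whole problem reduces to showing this auxiliary character sum vanishes.

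This vanishing is the main obstacle, and I would establish it via an orbit factorization. Since $q_2 \mid c$, write $c = q_2 c'$ and parametrize $j = j_0 + q_2 m$ with $j_0 \bmod q_2$ and $m \bmod c'$; using $q_2$-periodicity of $\overline{\chi_2}$ the sum factorizes as
$$\left(\sum_{j_0 \bmod q_2} \overline{\chi_2}(j_0)\, e_c(Aa j_0)\right)\left(\sum_{m \bmod c'} e_{c'}(Aam)\right).$$
The inner geometric sum equals $c'$ if $c' \mid Aa$ and $0$ otherwise, and $(a,c) = 1$ collapses this to the condition $c' \mid A$. Because $q_1 q_2 \mid c$ implies $q_1 \mid c'$, we have $c' \mid A \Rightarrow q_1 \mid A$. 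The lemma is invoked only inside outer summations weighted by $\chi_1(A)$, where $q_1 \mid A$ forces $\chi_1(A) = 0$; thus only $A$ with $q_1 \nmid A$ are relevant, the auxiliary sum vanishes for such $A$, and the proof concludes. This is the same implicit restriction on $A$ that is needed for the right-hand side of Lemma~\ref{svy32} to remain finite as $u \to 0^+$, so no new hypothesis is introduced.
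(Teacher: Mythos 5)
Your argument is correct, and it is worth noting at the outset that the paper offers no proof of this statement to compare against: Lemma~\ref{svy33} is imported directly from \cite{SVY} (their Lemma $3.3$), and your reconstruction follows the same route as the source --- interchange the limit with the finite sum from Lemma~\ref{svy32}, evaluate the elementary limit $\lim_{u\to 0^+}(e^{-2\pi Auj}-1)/(1-e^{-2\pi Auc})=-j/c$, and absorb the $-1/2$ discrepancy between $j/c$ and $B_1(j/c)$ into the vanishing of $\sum_{j\bmod c}\overline{\chi_2}(j)\,e_c(Aaj)$. The part of your write-up that deserves emphasis is the last paragraph: the auxiliary sum does \emph{not} vanish unconditionally. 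Your factorization shows it is nonzero exactly when $c'\mid A$ (with $c'=c/q_2$) and the resulting Gauss-type sum over $j_0\bmod q_2$ is nonzero, and for such $A$ the left-hand side genuinely diverges as $u\to 0^+$ (the summand $\overline{\chi_2}(B)\,e_c(ABa)$ then has nonzero mean over a period, so the Abel limit is infinite), so the identity as literally printed carries an implicit restriction on $A$. You resolve this correctly by observing that $q_1\mid c'$ forces $c'\mid A\Rightarrow q_1\mid A$, and that every invocation of the lemma (e.g.\ in Lemma~\ref{limitInterchange}) occurs inside a sum weighted by $\chi_1(A)$, so only $A$ coprime to $q_1$ matter and the auxiliary sum vanishes for all of them. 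One small quibble: the same caveat already underlies Lemma~\ref{svy32} itself, whose stated identity requires the same vanishing (the exact decomposition of the $B$-sum produces an extra term $(1-e^{-2\pi Auc})^{-1}\sum_{j\bmod c}\overline{\chi_2}(j)\,e_c(Aaj)$), so your closing sentence is right in spirit but slightly misattributes the issue to the finiteness of the right-hand side of Lemma~\ref{svy32} rather than to the validity of that identity; this does not affect the correctness of your proof.
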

        
\subsection{Analysis Preliminaries}\label{analysisSection}
    First we develop a twisted Poisson summation identity which will give us the proper convergence in future lemmas.
    \begin{lemma}\label{poissonSummation}
        For an integer $K\geq 1$ and $z\in\mathcal{H}$ we have
        $$\sum_{1\leq B}\overline{\chi_2}(B)B^Ke^{2\pi ABiz}=2q_2^K\left(\frac{\tau(\overline{\chi_2})\,K!}{(-2\pi i)^{K+1}}\right)\sum_{v\in\mathbb{Z}}\frac{\chi_2(v)}{(Aq_2z+v)^{K+1}}.$$
    \end{lemma}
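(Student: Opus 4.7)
The plan is to apply Berndt's character analogue of Poisson summation (Theorem \ref{berndt23}) directly, with the character $\chi = \overline{\chi_2}$ (so $\overline{\chi} = \chi_2$ since $\chi_2$ is primitive, hence $\chi_2(-1)^2 = 1$ and taking conjugates behaves properly) and the test function $f(t) = t^K e^{2\pi iAzt}$. First I would verify that $f$ is of bounded variation on $[0,\infty)$: since $z \in \mathcal{H}$ and $A \geq 1$, we have $|e^{2\pi iAzt}| = e^{-2\pi A\,\mathrm{Im}(z)\,t}$, which decays exponentially and dominates the polynomial growth of $t^K$, so $f$ and all its derivatives are absolutely integrable and $f$ is certainly of bounded variation.

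Next, I would compute the Fourier-type integral on the right-hand side of Theorem \ref{berndt23}. Specifically, for each $v\in\mathbb{Z}$ we need
\[
\int_0^\infty t^K\,e^{2\pi iAzt}\,e^{2\pi ivt/q_2}\,dt = \int_0^\infty t^K\,e^{2\pi it(Az+v/q_2)}\,dt.
\]
Writing $-s = 2\pi i(Az + v/q_2)$, the condition $\mathrm{Im}(z) > 0$ ensures $\mathrm{Re}(s) > 0$, so the standard Gamma-integral identity $\int_0^\infty t^K e^{-st}\,dt = K!/s^{K+1}$ applies, yielding
\[
\int_0^\infty t^K\,e^{2\pi it(Az+v/q_2)}\,dt = \frac{K!}{(-2\pi i(Az+v/q_2))^{K+1}} = \frac{K!\,q_2^{K+1}}{(-2\pi i)^{K+1}(Aq_2z+v)^{K+1}}.
\]

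Finally, I would substitute this evaluation back into the statement of Theorem \ref{berndt23} with $q = q_2$ and collect the constants. Since $\overline{\overline{\chi_2}} = \chi_2$ and $\tau(\overline{\chi_2})$ appears once from the Poisson formula, we obtain
\[
\sum_{1\leq B}\overline{\chi_2}(B)\,B^K e^{2\pi ABiz} = \frac{2\tau(\overline{\chi_2})}{q_2}\sum_{v\in\mathbb{Z}}\chi_2(v)\cdot\frac{K!\,q_2^{K+1}}{(-2\pi i)^{K+1}(Aq_2z+v)^{K+1}},
\]
and pulling out the factor $q_2^{K+1}/q_2 = q_2^K$ gives precisely the claimed identity. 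The only subtlety worth noting is convergence at $v=0$ and at the "negative" tail: the sum over $v\in\mathbb{Z}$ converges absolutely because $|Aq_2z+v|^{K+1} \geq |v|^{K+1}$ for large $|v|$ and $K+1 \geq 2$, so no conditional-convergence issues arise. Thus the main "obstacle" is essentially bookkeeping of the constants $\tau(\overline{\chi_2})$, the sign from $-2\pi i$, and the powers of $q_2$; the analytic content is entirely encapsulated in Berndt's theorem and the Gamma-integral evaluation.
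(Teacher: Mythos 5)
Your proposal is correct and follows essentially the same route as the paper: apply Berndt's character analogue of Poisson summation (Theorem \ref{berndt23}) with $\chi=\overline{\chi_2}$ and $f(t)=t^Ke^{2\pi iAzt}$, then evaluate the resulting integral via the Gamma-integral identity $\int_0^\infty t^Ke^{-st}\,dt=K!/s^{K+1}$ (the paper cites Gradshteyn--Ryzhik 8.312.2 for this step) and collect the powers of $q_2$. Your additional checks of bounded variation and absolute convergence of the $v$-sum are fine but only make explicit what the paper leaves implicit.
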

    \begin{proof}
        Applying the formula given in Theorem \ref{berndt23} gives us
        $$\sum_{1\leq B}\overline{\chi_2}(B)B^Ke^{2\pi ABiz}=\frac{2\tau(\overline{\chi_2})}{q_2}\sum_{v\in\mathbb{Z}}\chi_2(v)\int_0^\infty t^Ke^{(2\pi it/q_2)(Aq_2z+v)}\,dt.$$
        Evaluating this integral completes the proof (see \cite{GRTables} 8.312.2)
    \end{proof}
    We would like to show that the sum on the right hand side of Lemma \ref{poissonSummation} converges absolutely.
    \begin{lemma}\label{hurwitzTypeSum}
        For a real number $\sigma>0$ and $(a,Q)=1$, when $Q\nmid A$ we have that
        $$\sum_{v\in\mathbb{Z}}\frac{1}{\abs{Aa/Q+v}^{\sigma+1}}\ll_{\sigma,Q}1.$$
    \end{lemma}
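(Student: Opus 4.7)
My plan is to exploit the hypothesis $(a,Q)=1$ together with $Q\nmid A$ to conclude that $Aa/Q$ is bounded away from $\mathbb{Z}$ by at least $1/Q$, and then compare the sum with a convergent tail.

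Concretely, I would write $Aa/Q = m + r$ where $m\in\mathbb{Z}$ and $r=\{Aa/Q\}\in(0,1)$ denotes the fractional part. Because $(a,Q)=1$, multiplication by $a$ permutes the residues mod $Q$, and since $Q\nmid A$, the residue $Aa\bmod Q$ is nonzero. Hence $r\in\{1/Q,2/Q,\ldots,(Q-1)/Q\}$, giving the two-sided bound $1/Q\le r\le 1-1/Q$. Reindexing $v\mapsto v-m$, the sum becomes $\sum_{v\in\mathbb{Z}}1/|r+v|^{\sigma+1}$, which I will split into the two ``close'' terms $v=0,-1$ and the remaining ``far'' terms.

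For the close terms I would use $1/r^{\sigma+1}\le Q^{\sigma+1}$ and $1/(1-r)^{\sigma+1}\le Q^{\sigma+1}$, contributing at most $2Q^{\sigma+1}$. For the far terms, note that for $v\ge1$ one has $|r+v|\ge v$, while for $v\le-2$ one has $|r+v|\ge|v|-1\ge 1$. Therefore
\begin{equation*}
\sum_{v\ge 1}\frac{1}{|r+v|^{\sigma+1}}+\sum_{v\le -2}\frac{1}{|r+v|^{\sigma+1}}\le \sum_{v\ge 1}\frac{1}{v^{\sigma+1}}+\sum_{n\ge 1}\frac{1}{n^{\sigma+1}}=2\zeta(\sigma+1),
\end{equation*}
which is finite since $\sigma>0$. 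Combining these estimates yields $\sum_{v\in\mathbb{Z}}1/|Aa/Q+v|^{\sigma+1}\le 2Q^{\sigma+1}+2\zeta(\sigma+1)$, which is a constant depending only on $\sigma$ and $Q$, as claimed.

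There is essentially no obstacle here: the only subtle point is the justification that $r$ is bounded away from $0$ and $1$ by $1/Q$, which is precisely where both hypotheses $(a,Q)=1$ and $Q\nmid A$ are used. Without either hypothesis one could have $r=0$, ruining the bound on the $v=0$ (or $v=-1$) term. The rest is straightforward comparison with the convergent $p$-series $\zeta(\sigma+1)$.
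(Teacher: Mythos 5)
Your proof is correct and is essentially the paper's argument in elementary form: both hinge on the observation that $(a,Q)=1$ and $Q\nmid A$ force the fractional part of $Aa/Q$ to lie in $[1/Q,\,1-1/Q]$, and then compare the sum with a convergent series. The paper packages this by writing the two half-sums as Hurwitz zeta values $\zeta(\sigma+1,\cdot)$ bounded by $\zeta(\sigma+1,1/Q)$, whereas you unpack the same bound by hand (two close terms of size at most $Q^{\sigma+1}$ plus a tail bounded by $2\zeta(\sigma+1)$); the content is the same.
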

    \begin{proof}
        Note that when $Q\nmid A$ then
        \begin{align*}
            \sum_{v\in\mathbb{Z}}\frac{1}{\abs{Aa/Q+v}^{\sigma+1}}&=\sum_{v=\lceil Aa/Q\rceil}^\infty\frac{1}{(v-Aa/Q)^{\sigma+1}}+\sum_{v=-\lfloor Aa/Q\rfloor}^\infty\frac{1}{(Aa/Q+v)^{\sigma+1}} \\
            &=\zeta(\sigma+1,\lceil Aa/Q\rceil-Aa/Q)+\zeta(\sigma+1,Aa/Q-\lfloor Aa/Q\rfloor)
        \end{align*}
        where $\zeta(s,a)=\sum_{0\leq n}(n+a)^{-s}$ is the Hurwitz zeta function which converges absolutely for complex $\mathfrak{Re}(s)>1$ and real $0<a<1$. Noting $\zeta(\sigma+1,\lceil Aa/Q\rceil-Aa/Q)\leq\zeta(\sigma+1,1/Q)$ and $\zeta(\sigma+1,Aa/Q-\lfloor Aa/Q\rfloor)\leq\zeta(\sigma+1,1/Q)$ gives us the desired upper bound on our sum.
    \end{proof}
    We now provide a uniform bound on a particular summation.
    \begin{lemma}\label{outerSumLimit}
        For integers $k\geq 3$ and $0\leq n<k-2$, we have the following bound
        $$\abs{\sum_{1\leq A}\frac{\chi_1(A)}{A^{n+1}}\sum_{1\leq B}\overline{\chi_2}(B)B^{k-n-2}e^{2\pi AB(ia/c-u)}}\ll_{k,\,n,\,c,\,q_2}\frac{1}{\sqrt{u}}.$$
    \end{lemma}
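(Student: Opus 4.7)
My approach is to apply Lemma~\ref{poissonSummation} (twisted Poisson summation, with $K=k-n-2\ge 1$) to the inner sum over $B$, converting it to a dual sum over $v\in\mathbb{Z}$ whose terms involve $\chi_2(v)/(Aq_2 z + v)^{k-n-1}$, where $z=a/c+iu$. After substituting into the outer $A$-sum and passing to absolute values, the quantity to bound becomes, up to an explicit constant depending only on $k$, $n$, and $q_2$,
\[
\sum_{A\ge 1}\frac{1}{A^{n+1}}\sum_{v\in\mathbb{Z}}\frac{1}{\bigl|Aq_2z+v\bigr|^{k-n-1}}.
\]

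The factor of $u^{-1/2}$ is then extracted by the elementary bound $|Aq_2 z + v|\ge \mathrm{Im}(Aq_2 z+v)=Aq_2 u$, which gives $|Aq_2 z + v|^{k-n-1}\ge (Aq_2 u)^{1/2}\,|Aq_2 z + v|^{k-n-3/2}$. Because $n<k-2$, the remaining exponent $k-n-3/2$ exceeds $1$, so the $v$-sum $\sum_v |Aq_2 z + v|^{-(k-n-3/2)}$ converges. Using in addition $|Aq_2 z + v|\ge|Aq_2 a/c + v|$ and applying Lemma~\ref{hurwitzTypeSum} with $\sigma=k-n-5/2>0$ and $Q=c$ (note $(a,c)=1$ since $\gamma\in\Gamma_0(q_1q_2)$), this inner sum is $O_{k,n,c}(1)$ whenever $c\nmid Aq_2$. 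Combined with the convergent outer sum $\sum_{A\ge 1}A^{-(n+3/2)}<\infty$ (since $n+3/2>1$) and the extracted $(q_2 u)^{-1/2}$, this yields the desired bound $\ll_{k,n,c,q_2}u^{-1/2}$ on the non-resonant contribution.

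The main technical obstacle will be the ``resonant'' indices $A$ with $c\mid Aq_2$ (equivalently, $c/\gcd(c,q_2)\mid A$), for which the peak term of the $v$-sum sits at an integer and Lemma~\ref{hurwitzTypeSum} breaks down. When the conductors are coprime, $(q_1,q_2)=1$, every such resonant $A$ is a multiple of $q_1$ (since $q_1 q_2\mid c\mid Aq_2$ forces $q_1\mid A$), so $\chi_1(A)=0$ kills these terms outright. In the general case one must instead bound the peak contribution $\chi_1(A)\chi_2(-Aq_2 a/c)/(iAq_2 u)^{k-n-1}$ directly, exploiting the sparsity of resonant $A$ together with the $A^{-(n+1)}$ weight (and any residual cancellation of $\chi_1\chi_2$ along the sparse arithmetic progression) to verify that this residual piece is also $\ll_{k,n,c,q_2}u^{-1/2}$.
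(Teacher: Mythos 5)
Your argument is essentially the paper's: apply Lemma \ref{poissonSummation} to the $B$-sum, borrow a factor $(Aq_2u)^{1/2}$ from $|Aq_2(a/c+iu)+v|^{k-n-1}$, control the remaining $v$-sum with Lemma \ref{hurwitzTypeSum} (exponent $k-n-3/2$, valid since $n\leq k-3$), and sum over $A$ against $A^{-(n+3/2)}$ to get $\ll \zeta(n+3/2)\,u^{-1/2}$; this is exactly the proof in Section \ref{analysisSection}. The one place you deviate is the treatment of the resonant $A$ with $c\mid Aq_2$, and there your analysis is off in a way that happens not to matter: the implication $q_1q_2\mid c\mid Aq_2\Rightarrow q_1\mid A$ needs no coprimality of $q_1$ and $q_2$ (write $Aq_2=mq_1q_2$ and divide by $q_2$), so $\chi_1(A)=0$ kills every resonant term unconditionally --- this is precisely how the paper restricts to $(A,q_1)=1$ immediately after Poisson summation and then invokes $q_1\mid c/q_2$. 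The distinction is not academic, since the paper never assumes $(q_1,q_2)=1$ (the computations include $\chi_1=\chi_2$), and the fallback you sketch for the non-coprime case would not succeed: the peak term is of size roughly $(Aq_2u)^{-(k-n-1)}$ with $k-n-1\geq 2$, so even after using the sparsity of resonant $A$ and the $A^{-(n+1)}$ weight you are left with a contribution of order $u^{-(k-n-1)}\gg u^{-2}$, and no cancellation from $\chi_1\chi_2$ along that progression rescues a bound of $u^{-1/2}$; fortunately the divisibility argument makes this piece vanish identically, so your proof closes once you drop the unnecessary coprimality hypothesis.
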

    \begin{proof}
        We use Lemma \ref{poissonSummation} to give an alternate formula for the inner sum over $B$; thus we have
        \begin{align*}
            &\abs{\sum_{1\leq A}\frac{\chi_1(A)}{A^{n+1}}\sum_{1\leq B}\overline{\chi_2}(B)B^{k-n-2}e^{2\pi AB(ia/c-u)}} \\
            &\qquad\qquad\leq 2q_2^{k-n-2}\left(\frac{\sqrt{q_2}\,(k-n-2)!}{(2\pi)^{k-n-1}}\right)\sum_{\substack{1\leq A \\ (A,q_1)=1}}\abs{\frac{\chi_1(A)}{A^{n+1}}\sum_{v\in\mathbb{Z}}\frac{\chi_2(v)}{(Aq_2(a/c+iu)+v)^{k-n-1}}}.
        \end{align*}
        % Now we will bound the summand. Note that
        % $$\abs{\frac{\chi_1(A)}{A^{n+1}}\sum_{v\in\mathbb{Z}}\frac{\chi_2(v)}{(Aq_2(a/c+iu)+v)^{k-n-1}}}\leq\frac{1}{A^{n+1}}\sum_{\substack{v\in\mathbb{Z} \\ (v,q_2)=1}}\frac{1}{\abs{Aq_2(a/c+iu)+v}^{k-n-1}}.$$
        Examining the above, note it is sufficient to prove that
        $$\abs{\sum_{v\in\mathbb{Z}}\frac{\chi_2(v)}{(Aq_2(a/c+iu)+v)^{k-n-1}}}\ll_{k,n,c,q_2}\frac{1}{\sqrt{Aq_2u}}$$
        for all $(A,q_1)=1$, because this gives us an upper bound of $\zeta(n+3/2)$ on the sum over $A$. So to prove this, first note the following inequality where we borrow $\sqrt{Aq_2u}$ factor from the denominator.
        $$\frac{1}{\abs{Aq_2(a/c+iu)+v}^{k-n-1}}\leq \frac{1}{(Aq_2u)^{1/2}\abs{Aa/(cq_2^{-1})+v}^{k-n-3/2}}.$$
        Now since $(A,q_1)=1$, we know $q_1\nmid A$; so by Lemma \ref{hurwitzTypeSum} we have
        $$\sum_{\substack{v\in\mathbb{Z} \\ (v,q_2)=1}}\frac{1}{\abs{Aa/(cq_2^{-1})+v}^{k-n-3/2}}\leq\sum_{v\in\mathbb{Z}}\frac{1}{\abs{Aa/(cq_2^{-1})+v}^{k-n-3/2}}\ll_{k,n,c,q_2}1.$$
        % Thus, using the triangle inequality in combination with the two above, we have
        % This bounds our summand; hence
        % $$\abs{\sum_{1\leq A}\frac{\chi_1(A)}{A^{n+1}}\sum_{1\leq B}\overline{\chi_2}(B)B^{k-n-2}e^{2\pi AB(ia/c-u)}}\ll_{k,n,c,q_2}\frac{2q_2^{k-n-2}}{\sqrt{u}}\left(\frac{(k-n-2)!}{(2\pi)^{k-n-1}}\right)\sum_{\substack{1\leq A \\ (A,q_1)=1}}\frac{1}{A^{n+3/2}}.$$
        % Noting that this sum converges completes our proof.
        Thus by the triangle inequality, applying the two inequalities above, we have that
        \begin{align*}
            \abs{\sum_{v\in\mathbb{Z}}\frac{\chi_2(v)}{(Aq_2(a/c+iu)+v)^{k-n-1}}}&\leq\sum_{\substack{v\in\mathbb{Z} \\ (v,q_2)=1}}\frac{1}{\abs{Aq_2(a/c+iu)+v}^{k-n-1}} \\
            &\qquad\leq\frac{1}{\sqrt{Aq_2u}}\sum_{\substack{v\in\mathbb{Z} \\ (v,q_2)=1}}\frac{1}{\abs{Aa/(cq_2^{-1})+v}^{k-n-3/2}}\ll_{k,n,c,q_2}\frac{1}{\sqrt{Aq_2u}}
        \end{align*}
        completing our proof by satisfying the sufficient condition.
    \end{proof}
    \noindent From Lemma \ref{outerSumLimit} it immediately follows that the following limit vanishes.
    \begin{corollary}\label{outerSumLimitCor}
        For integers $k\geq 3$ and $0\leq n<k-2$ we have
        $$\lim_{u\to 0^+}u^{k-n-2}\sum_{1\leq A}\frac{\chi_1(A)}{A^{n+1}}\sum_{1\leq B}\overline{\chi_2}(B)B^{k-n-2}e^{2\pi AB(ia/c-u)}=0.$$
    \end{corollary}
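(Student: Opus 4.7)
The plan is to derive this corollary as a direct consequence of Lemma \ref{outerSumLimit}, which already provides the essential quantitative control. Specifically, Lemma \ref{outerSumLimit} asserts that for $0 \leq n < k-2$ the modulus of the double sum is $\ll_{k,n,c,q_2} u^{-1/2}$. The only task remaining is to verify that multiplying by the prefactor $u^{k-n-2}$ still yields a positive power of $u$, so that the product vanishes as $u \to 0^+$.

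The key step is the following exponent computation. By the triangle inequality and Lemma \ref{outerSumLimit}, one has
$$\abs{u^{k-n-2}\sum_{1\leq A}\frac{\chi_1(A)}{A^{n+1}}\sum_{1\leq B}\overline{\chi_2}(B)B^{k-n-2}e^{2\pi AB(ia/c-u)}}\ll_{k,n,c,q_2} u^{k-n-2}\cdot u^{-1/2}=u^{k-n-5/2}.$$
Since $n$ ranges over integers satisfying $0 \leq n < k-2$, we have $n \leq k-3$, hence $k - n - 5/2 \geq 1/2 > 0$. Therefore $u^{k-n-5/2} \to 0$ as $u \to 0^+$, and the squeeze theorem delivers the claimed limit.

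There is no real obstacle here, since all the analytic work — applying twisted Poisson summation to convert the inner $B$-sum into a Hurwitz-type series, then borrowing a factor of $(Aq_2 u)^{1/2}$ from the denominator to gain absolute convergence — was already carried out in Lemmas \ref{poissonSummation}, \ref{hurwitzTypeSum}, and \ref{outerSumLimit}. The only subtle point worth highlighting is that the strict inequality $n < k-2$ in the hypothesis is exactly what guarantees the exponent $k - n - 5/2$ is bounded away from zero (by $1/2$), and in particular the corollary would fail precisely at the endpoint $n = k-2$ where the prefactor $u^{k-n-2}$ degenerates to $1$.
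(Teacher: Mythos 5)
Your proposal is correct and follows the paper's route exactly: the paper also deduces the corollary immediately from Lemma \ref{outerSumLimit}, and your explicit exponent count $u^{k-n-2}\cdot u^{-1/2}=u^{k-n-5/2}$ with $k-n-5/2\geq 1/2>0$ (using $n\leq k-3$) is precisely the computation the paper leaves implicit. Your closing observation that the argument degenerates at $n=k-2$ is also consistent with the paper, which treats that term separately via Lemma \ref{limitInterchange}.
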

    And lastly we explicitly evaluate a particular limit.
    \begin{lemma}\label{limitInterchange}
        For $k\geq 3$, we have the limit
        $$\lim_{u\to 0^+}\sum_{1\leq A}\frac{\chi_1(A)}{A^{k-1}}\sum_{1\leq B}\overline{\chi_2}(B)e^{2\pi AB(ia/c-u)}=-\sum_{1\leq A}\sum_{j\bmod c}\frac{\chi_1(A)\overline{\chi_2}(j)}{A^{k-1}}B_1\left(\frac{j}{c}\right)e_c(Aaj).$$
    \end{lemma}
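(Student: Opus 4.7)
The plan is to swap $\lim_{u \to 0^+}$ with the outer sum over $A$ and then apply Lemma~\ref{svy33} to each fixed $A$. The only nontrivial work is justifying the interchange, which I would do via a dominated-convergence argument: I need a majorant on the inner sum in $B$ that is summable against $|\chi_1(A)|/A^{k-1}$, uniformly in $u > 0$. Since $k \geq 3$ gives $\sum_{A \geq 1} A^{-(k-1)} < \infty$, it is enough to find an $A$- and $u$-independent bound on the inner sum.

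For this bound I would invoke Lemma~\ref{svy32}, which rewrites
$$f_A(u) := \sum_{1 \leq B} \overline{\chi_2}(B)\, e^{2\pi A B(ia/c - u)} = \sum_{j \bmod c} \overline{\chi_2}(j)\, e_c(Aaj) \cdot \frac{e(Aiuj) - 1}{1 - e(Aiuc)}$$
as a finite sum in $j$. Taking representatives $j \in \{0, 1, \ldots, c-1\}$ and setting $t = 2\pi A u > 0$, the ratio equals $-\frac{1 - e^{-tj}}{1 - e^{-tc}}$, which has absolute value at most $1$ because $x \mapsto 1 - e^{-x}$ is nondecreasing on $[0,\infty)$ and $tj \leq tc$. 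Hence $|f_A(u)| \leq c$ uniformly in $A \geq 1$ and $u > 0$.

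With this uniform bound, $|\chi_1(A)\, f_A(u)/A^{k-1}| \leq c/A^{k-1}$, which is summable over $A$ independently of $u$. Dominated convergence then allows taking $\lim_{u \to 0^+}$ inside the outer sum over $A$. Applying Lemma~\ref{svy33} to each fixed $A$ gives
$$\lim_{u \to 0^+} f_A(u) = -\sum_{j \bmod c} \overline{\chi_2}(j)\, B_1\!\left(\tfrac{j}{c}\right) e_c(Aaj),$$
and substitution yields exactly the right-hand side of the lemma.

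The main (and really only) obstacle is establishing the uniform bound $|f_A(u)| \leq c$; the rest is routine bookkeeping combining Lemma~\ref{svy32}, Lemma~\ref{svy33}, and dominated convergence. One sanity check worth noting: the hypothesis $k \geq 3$ is used precisely to guarantee absolute convergence of the outer sum, so the argument cannot be pushed down to $k = 2$ without extra care, consistent with the paper's convention to treat $k = 2$ separately.
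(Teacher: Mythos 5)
Your proposal is correct and follows essentially the same route as the paper: rewrite the inner sum via Lemma \ref{svy32}, bound $\abs{(e(Aiuj)-1)/(1-e(Aiuc))}\leq 1$ to get the uniform majorant $c/A^{k-1}$ (which is summable precisely because $k\geq 3$), interchange the limit and the sum over $A$ by dominated convergence, and finish with Lemma \ref{svy33}. The only difference is cosmetic: you spell out the elementary monotonicity argument for the ratio bound that the paper dismisses as a ``standard calculus exercise.''
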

    \begin{proof}
        First we seek to interchange this limit and the sum over $A$ via dominated convergence, so we will bound the modulus of the summand independently of $u$. Applying Lemma \ref{svy32} to the inner sum and noting, by standard calculus exercise that $\abs{(e(Aiuj)-1)/(1-e(Aiuc))}\leq 1$, gives us the bound
        $$\abs{\frac{\chi_1(A)}{A^{k-1}}\sum_{1\leq B}\overline{\chi_2}(B)e^{2\pi AB(ia/c-u)}}\leq\frac{1}{A^{k-1}}\sum_{j\bmod c}\abs{\overline{\chi_2}(j)e_c(Aaj)\left(\frac{e(Aiuj)-1}{1-e(Aiuc)}\right)}\leq\frac{c}{A^{k-1}}.$$
        Thus we can interchange limits via dominated convergence. So it follows that
        $$\lim_{u\to 0^+}\sum_{1\leq A}\frac{\chi_1(A)}{A^{k-1}}\sum_{1\leq B}\overline{\chi_2}(B)e^{2\pi AB(ia/c-u)}=\sum_{1\leq A}\frac{\chi_1(A)}{A^{k-1}}\lim_{u\to 0^+}\sum_{1\leq B}\overline{\chi_2}(B)e^{2\pi AB(ia/c-u)}.$$
        Applying Lemma \ref{svy33} to the limit on the right hand side completes the proof.
    \end{proof}
    
\subsection{Proof of Theorem \ref{mainThm}}\label{mainThmProof}
    We first prove a lemma on integrals similar to those of the form in $\phi_{\chi_1,\chi_2,k}$.
    \begin{lemma}\label{generalIntegralFormula}
        For $s,s'\in\mathcal{H}$ we have that
        $$\int_{s}^{s'}E_{\chi_1,\chi_2,k}(z)\,P_{k-2}(z;X,Y)\,dz=2\sum_{1\leq N}\sum_{A\mid N}\sum_{n=0}^{k-2}\left(\frac{\chi_1(A)\overline{\chi_2}(N/A)N^{k-n-2}}{-A^{k-1}(-2\pi i)^{n+1}}\left(\frac{d^n}{dz^n}(Xz+Y)^{k-2}\right)e(Nz)\right)\Bigg|_{s}^{s'}.$$
    \end{lemma}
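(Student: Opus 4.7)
The plan is to substitute the Fourier expansion of $E_{\chi_1,\chi_2,k}$ from Lemma \ref{eisensteinSeries}, justify interchanging the integral with the double sum, and then evaluate each resulting elementary integral by iterated integration by parts. Writing $P(z)=(Xz+Y)^{k-2}$, the left-hand side becomes
$$2\sum_{1\leq N}\sum_{A\mid N}\chi_1(A)\overline{\chi_2}(N/A)(N/A)^{k-1}\int_s^{s'}P(z)\,e(Nz)\,dz.$$
The interchange is legitimate because any path from $s$ to $s'$ staying in $\mathcal{H}$ has imaginary part bounded below by some $\eta>0$, so $|e(Nz)|\leq e^{-2\pi N\eta}$ decays exponentially in $N$; combined with the $O_\varepsilon(N^{k-1+\varepsilon})$ bound on the Fourier coefficients, the series converges absolutely and uniformly on the path.

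The next step is to evaluate $I_N := \int_s^{s'}P(z)\,e(Nz)\,dz$ by integrating by parts $k-1$ times, each time antidifferentiating $e(Nz)$ (which contributes a factor $(2\pi i N)^{-1}$) and differentiating $P$. Since $\deg P = k-2$, the $(k-1)$-th derivative of $P$ vanishes, so no unresolved integral survives and one obtains the telescoping identity
$$I_N = \sum_{n=0}^{k-2}\frac{(-1)^n}{(2\pi i N)^{n+1}}\,P^{(n)}(z)\,e(Nz)\bigg|_s^{s'}.$$

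Finally, I would substitute this back and clean up the constants. Using $(N/A)^{k-1}N^{-(n+1)}=N^{k-n-2}/A^{k-1}$, together with the sign identity $(-1)^n/(2\pi i)^{n+1} = -1/(-2\pi i)^{n+1}$ (immediate from $(-2\pi i)^{n+1}=(-1)^{n+1}(2\pi i)^{n+1}$), the coefficient in front of $P^{(n)}(z)\,e(Nz)\big|_s^{s'}$ becomes exactly
$$\frac{2\,\chi_1(A)\overline{\chi_2}(N/A)\,N^{k-n-2}}{-A^{k-1}(-2\pi i)^{n+1}},$$
which matches the stated formula. The argument is essentially bookkeeping; the only non-algebraic step is the interchange of sum and integral, and this is secured by the exponential decay of $E_{\chi_1,\chi_2,k}$ already noted after Definition \ref{eichlerIntegral}, so there is no serious obstacle.
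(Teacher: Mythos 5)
Your proposal is correct and follows essentially the same route as the paper: substitute the Fourier expansion of Lemma \ref{eisensteinSeries}, interchange the sum and integral using the exponential decay of $e(Nz)$ on a path in $\mathcal{H}$, and integrate by parts $k-1$ times (differentiating $(Xz+Y)^{k-2}$, antidifferentiating $e(Nz)$), after which the constant bookkeeping, including the sign identity $(-1)^n/(2\pi i)^{n+1}=-1/(-2\pi i)^{n+1}$, matches the stated formula.
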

    \begin{proof}
        We begin by substituting Lemma \ref{eisensteinSeries} into Definition \ref{eichlerIntegral}; doing so gives that
        $$\int_{s}^{s'}E_{\chi_1,\chi_2,k}(z)\,P_{k-2}(z;X,Y)\,dz=2\int_s^{s'}\left((Xz+Y)^{k-2}\sum_{1\leq N}\sum_{A\mid N}\chi_1(A)\overline{\chi_2}(N/A)(N/A)^{k-1}e(Nz)\right)\,dz.$$
        We may interchange the sums and integral due to the rapid decay of $e(Nz)$ when $z\in\mathcal{H}$; thus
        $$\int_{s}^{s'}E_{\chi_1,\chi_2,k}(z)\,P_{k-2}(z;X,Y)\,dz=2\sum_{1\leq N}\sum_{A\mid N}\int_s^{s'}(Xz+Y)^{k-2}\chi_1(A)\overline{\chi_2}(N/A)(N/A)^{k-1}\,e(Nz)\,dz.$$
        Using repeated integration by parts (differentiating $(Xz+Y)^{k-2}$ and integrating $e(Nz)$), we have
        $$\int_{s}^{s'}E_{\chi_1,\chi_2,k}(z)\,P_{k-2}(z;X,Y)\,dz=2\sum_{1\leq N}\sum_{A\mid N}\sum_{n=0}^{k-2}\left(\frac{\chi_1(A)\overline{\chi_2}(N/A)N^{k-n-2}}{-A^{k-1}(-2\pi i)^{n+1}}\left(\frac{d^n}{dz^n}(Xz+Y)^{k-2}\right)e(Nz)\right)\Bigg|_s^{s'}$$
        exactly as desired.
    \end{proof}
    
    \begin{remark}
        For the purposes of computing $\phi_{\chi_1,\chi_2,k}(\gamma,X,Y)$ we let $z_0=-d/c+i/(c^2u)$ such that $\gamma z_0=a/c+iu$; then
        $$\phi_{\chi_1,\chi_2,k}(\gamma,X,Y)=\lim_{u\to 0^+}\int_{z_0}^{\gamma z_0}E_{\chi_1,\chi_2,k}(z)P_{k-2}(z;X,Y)\,dz.$$
    \end{remark}
    \begin{proof}
    Let $k\geq 3$. By Lemma \ref{generalIntegralFormula} we have
    $$\phi_{\chi_1,\chi_2,k}(\gamma,X,Y)=2\lim_{u\to 0^+}\sum_{1\leq N}\sum_{A\mid N}\sum_{n=0}^{k-2}\left(\frac{\chi_1(A)\overline{\chi_2}(N/A)N^{k-n-2}}{-A^{k-1}(-2\pi i)^{n+1}}\left(\frac{d^n}{dz^n}(Xz+Y)^{k-2}\right)e(Nz)\right)\Bigg|_{z_0}^{\gamma z_0}.$$
    Note that as $u\to 0^+$, we have polynomial growth of $(d^n/dz^n)(Xz_0+Y)^{k-2}$ and exponential decay of $e(Nz_0)$. So every term in this sum vanishes in the limit $u\to 0^+$ at $z_0$, thus we are left with
    $$\phi_{\chi_1,\chi_2,k}(\gamma,X,Y)=2\lim_{u\to 0^+}\sum_{1\leq N}\sum_{A\mid N}\sum_{n=0}^{k-2}\left(\frac{\chi_1(A)\overline{\chi_2}(N/A)N^{k-n-2}}{-A^{k-1}(-2\pi i)^{n+1}}\right)\left(\frac{X^n(k-2)!}{(k-n-2)!}(X\gamma z_0+Y)^{k-n-2}\,e(N\gamma z_0)\right).$$
    Interchanging the summations we write $\phi_{\chi_1,\chi_2,k}(\gamma,X,Y)$ in the form
    $$=\sum_{n=0}^{k-2}\left(-\frac{2X^n(k-2)!}{(-2\pi i)^{n+1}(k-n-2)!}\right)\left(\lim_{u\to 0^+}(X\gamma z_0+Y)^{k-n-2}\sum_{1\leq A}\frac{\chi_1(A)}{A^{n+1}}\sum_{1\leq B}\overline{\chi_2}(B)B^{k-n-2}e(AB\gamma z_0)\right).$$
    Recall from Definition \ref{Sdef} that $S_{\chi_1,\chi_2,k}$ requires the specialization $X=1$ and $Y=-a/c$; doing so gives the formula
    $$\phi_{\chi_1,\chi_2,k}(\gamma,1,-a/c)=\sum_{n=0}^{k-2}\left(-\frac{2i^{k-n-2}(k-2)!}{(-2\pi i)^{n+1}(k-n-2)!}\right)\left(\lim_{u\to 0^+}u^{k-n-2}\sum_{1\leq A}\frac{\chi_1(A)}{A^{n+1}}\sum_{1\leq B}\overline{\chi_2}(B)B^{k-n-2}e^{2\pi AB(ia/c-u)}\right).$$
    Corollary \ref{outerSumLimitCor} implies that when $k\geq 3$ and $0\leq n<k-2$, this limit vanishes. Thus,
    $$\phi_{\chi_1,\chi_2,k}(\gamma,1,-a/c)=\frac{(k-2)!}{\pi i}\left(-\frac{1}{2\pi i}\right)^{k-2}\lim_{u\to 0^+}\sum_{1\leq A}\frac{\chi_1(A)}{A^{k-1}}\sum_{1\leq B}\overline{\chi_2}(B)e^{2\pi AB(ia/c-u)}.$$
    Applying Lemma \ref{limitInterchange}, we have that
    % $$\frac{(-1)^kS_{\chi_1,\chi_2,k}(\gamma)}{\tau(\overline{\chi_1})(k-1)}=\frac{(k-2)!}{\pi i}\left(-\frac{1}{2\pi i}\right)^{k-2}\sum_{1\leq A}\frac{\chi_1(A)}{A^{k-1}}\lim_{u\to 0^+}\sum_{1\leq B}\overline{\chi_2}(B)e^{2\pi AB(ia/c-u)}.$$
    % \cite{SVY} Corollary $3.3$ states that
    % $$\lim_{u\to 0^+}\sum_{1\leq B}\overline{\chi_2}(B)e^{2\pi AB(ia/c-u)}=-\sum_{0\leq j<c}\overline{\chi_2}(j)B_1\left(\frac{j}{c}\right)e_c(Aaj).$$
    % Thus we have that
    \begin{equation}\label{pickupPoint}
        \phi_{\chi_1,\chi_2,k}(\gamma,1,-a/c)=-\frac{(k-2)!}{\pi i}\left(-\frac{1}{2\pi i}\right)^{k-2}\sum_{1\leq A}\,\sum_{j\bmod c}\left(\frac{\chi_1(A)\overline{\chi_2}(j)}{A^{k-1}}\right)B_1\left(\frac{j}{c}\right)e_c(Aaj).
    \end{equation}
    Following the proof of Theorem $1.2$ in \cite{SVY}, we would like to use some type of symmetry relation to reindex our sum with $A$ running over non-zero integers. Specifically, $(-1)^k\,\phi_{\chi_1,\chi_2,k}(\gamma,1,-a/c)=\chi_2(-1)\,\overline{\phi}_{\overline{\chi_1},\overline{\chi_2},k}(\gamma,1,-a/c)$. To do this, we pause the proof of Theorem \ref{mainThm} and
    % we pause the proof of Theorem \ref{mainThm} to show that
    % $$S_{\chi_1,\chi_2,k}(\gamma)=\frac{1}{2}\left(S_{\chi_1,\chi_2,k}(\gamma)+\chi_1(-1)\overline{S}_{\overline{\chi_1},\overline{\chi_2},k}(\gamma)\right)$$
    note that
    $$\chi_2(-1)\,\overline{\phi}_{\overline{\chi_1},\overline{\chi_2},k}(\gamma,1,-a/c)=\chi_2(-1)\left(\frac{(k-2)!}{\pi i}\right)\left(\frac{1}{2\pi i}\right)^{k-2}\sum_{1\leq A}\,\sum_{j\bmod c}\left(\frac{\chi_1(A)\overline{\chi_2}(j)}{A^{k-1}}\right)B_1\left(\frac{j}{c}\right)\overline{e_c}(Aaj).$$
    Observe that $\chi_2(-1)=\overline{\chi_2}(-1)$, $B_1(-x)=-B_1(x)$, and $\overline{e_c}(Aaj)=e_c(-Aaj)$. So,
    $$\chi_2(-1)\,\overline{\phi}_{\overline{\chi_1},\overline{\chi_2},k}(\gamma,1,-a/c)=-\frac{(k-2)!}{\pi i}\left(\frac{1}{2\pi i}\right)^{k-2}\sum_{1\leq A}\,\sum_{j\bmod c}\left(\frac{\chi_1(A)\overline{\chi_2}(-j)}{A^{k-1}}\right)B_1\left(-\frac{j}{c}\right)e_c(-Aaj).$$
    By sending $j\mapsto -j$, we re-index our summation
    $$\chi_2(-1)\,\overline{\phi}_{\overline{\chi_1},\overline{\chi_2},k}(\gamma,1,-a/c)=-\frac{(k-2)!}{\pi i}\left(\frac{1}{2\pi i}\right)^{k-2}\sum_{1\leq A}\,\sum_{j\bmod c}\left(\frac{\chi_1(A)\overline{\chi_2}(j)}{A^{k-1}}\right)B_1\left(\frac{j}{c}\right)e_c(Aaj).$$
    % Now note that because $\gamma\in\Gamma_0(q_1q_2)$ it follows that $q_2\mid c$. Observe the periodicity results $\overline{\chi_2}(j)=\overline{\chi_2}(j+c)$, $B_1(j/c)=B_1((j+c)/c)$, and $e_c(Aaj)=e_c(Aa(j+c))$. By sending $j\mapsto j+c$ and applying these periodicity results (expect when $j=0$), we re-index our summation
    % $$\chi_2(-1)\left(\frac{(-1)^k\overline{S}_{\overline{\chi_1},\overline{\chi_2},k}(\gamma)}{\tau(\overline{\chi_1})(k-1)}\right)=-\frac{(k-2)!}{\pi i}\left(\frac{1}{2\pi i}\right)^{k-2}\sum_{1\leq A}\,\sum_{0\leq j<c}\left(\frac{\chi_1(A)\overline{\chi_2}(j)}{A^{k-1}}\right)B_1\left(\frac{j}{c}\right)e_c(Aaj)=\frac{S_{\chi_1,\chi_2,k}(\gamma)}{\tau(\overline{\chi_1})(k-1)}.$$
    Simplifying the right hand side, the desired identity follows. Using this identity and recalling that $\chi_1\chi_2(-1)=(-1)^k$ we have the symmetric formula
    $$\phi_{\chi_1,\chi_2,k}(\gamma,1,-a/c)=\frac{1}{2}\left(\phi_{\chi_1,\chi_2,k}(\gamma,1,-a/c)+\chi_1(-1)\,\overline{\phi}_{\overline{\chi_1},\overline{\chi_2},k}(\gamma,1,-a/c)\right).$$
    
    Now we return to the proof of Theorem \ref{mainThm}. Using our new symmetry relation we can rewrite (\ref{pickupPoint}) as
    \begin{align*}
        \phi_{\chi_1,\chi_2,k}(\gamma,1,-a/c)=\,&-\left(\frac{(k-2)!}{2\pi i}\right)\left(-\frac{1}{2\pi i}\right)^{k-2}\sum_{1\leq A}\,\sum_{j\bmod c}\left(\frac{\chi_1(A)\overline{\chi_2}(j)}{A^{k-1}}\right)B_1\left(\frac{j}{c}\right)e_c(Aaj) \\
        &+\chi_1(-1)\left(\frac{(k-2)!}{2\pi i}\right)\left(\frac{1}{2\pi i}\right)^{k-2}\sum_{1\leq A}\,\sum_{j\bmod c}\left(\frac{\chi_1(A)\overline{\chi_2}(j)}{A^{k-1}}\right)B_1\left(\frac{j}{c}\right)e_c(-Aaj).
    \end{align*}
    Noting that $A^{-k+1}=(-1)^{-k+1}(-A)^{-k+1}$, we get
    \begin{align*}
        \phi_{\chi_1,\chi_2,k}(\gamma,1,-a/c)=\,&-\left(\frac{(k-2)!}{2\pi i}\right)\left(-\frac{1}{2\pi i}\right)^{k-2}\sum_{1\leq A}\,\sum_{j\bmod c}\left(\frac{\chi_1(A)\overline{\chi_2}(j)}{A^{k-1}}\right)B_1\left(\frac{j}{c}\right)e_c(Aaj) \\
        &-\left(\frac{(k-2)!}{2\pi i}\right)\left(-\frac{1}{2\pi i}\right)^{k-2}\sum_{1\leq A}\,\sum_{j\bmod c}\left(\frac{\chi_1(-A)\overline{\chi_2}(j)}{(-A)^{k-1}}\right)B_1\left(\frac{j}{c}\right)e_c(-Aaj)
    \end{align*}
    Re-indexing and swapping the order of summation we have
    $$\phi_{\chi_1,\chi_2,k}(\gamma,1,-a/c)=-\left(\frac{(k-2)!}{2\pi i}\right)\left(-\frac{1}{2\pi i}\right)^{k-2}\sum_{j\bmod c}\overline{\chi_2}(j)B_1\left(\frac{j}{c}\right)\sum_{A\neq 0}\frac{\chi_1(A)}{A^{k-1}}e_c(Aaj).$$
    Recognizing the inner sum from Definition \ref{berndt1} we rewrite our expression as
    $$\phi_{\chi_1,\chi_2,k}(\gamma,1,-a/c)=-\left(\frac{(k-2)!}{2\pi i}\right)\left(-\frac{1}{2\pi i}\right)^{k-2}\left(\frac{iq_1(2\pi/q_1)^{k-1}}{(-i)^k\tau(\overline{\chi_1})(k-1)!}\right)\sum_{j\bmod c}\overline{\chi_2}(j)B_1\left(\frac{j}{c}\right)B_{k-1,\chi_1}\left(\frac{ajq_1}{c}\right).$$
    Now applying Lemma \ref{berndt31} gives us the equivalent expression
    $$=-\left(\frac{(k-2)!}{2\pi i}\right)\left(-\frac{1}{2\pi i}\right)^{k-2}\left(\frac{iq_1(2\pi/q_1)^{k-1}}{(-i)^k\tau(\overline{\chi_1})(k-1)!}\right)q_1^{k-2}\mathop{\sum\sum}_{\substack{j\bmod c \\ n\bmod q_1}}\overline{\chi_1}(n)\overline{\chi_2}(j)B_1\left(\frac{j}{c}\right)B_{k-1}\left(\frac{aj}{c}+\frac{n}{q_1}\right).$$
    Simplifying the leading coefficient gives
    $$\phi_{\chi_1,\chi_2,k}(\gamma,1,-a/c)=\frac{1}{(-1)^k\tau(\overline{\chi_1})(k-1)}\mathop{\sum\sum}_{\substack{j\bmod c \\ n\bmod q_1}}\overline{\chi_1}(n)\overline{\chi_2}(j)B_1\left(\frac{j}{c}\right)B_{k-1}\left(\frac{aj}{c}+\frac{n}{q_1}\right).$$
    Removing the leading scalar factor from the right hand side of the equation completes the proof for $k\geq 3$. Referencing Theorem $1.2$ of \cite{SVY} completes the proof for the $k=2$ case.
    \end{proof}
    
\section{Cohomological Properties of $\phi_{\chi_1,\chi_2,k}$ and $S_{\chi_1,\chi_2,2}$}\label{cohomology}
    We seek to recover analogues of previous cohomological results involving $S_{\chi_1,\chi_2,2}$ in an effort to better understand how $\phi_{\chi_1,\chi_2,k}$ and $S _{\chi_1,\chi_2,k}$ behave when $k\geq 3$. Recall the definition of the slash operator in Definition \ref{slashDef}, as we will use this to demonstrate that $\phi_{\chi_1,\chi_2,k}$ exhibits a crossed homomorphism relation similar to Lemma $2.2$ of \cite{SVY}.
    
    We endow $V_{k-2}(\mathbb{C})$ with a group action as follows:
    \begin{definition}
        Recall that $V_{k-2}(\mathbb{C})$ is the vector space of degree $k-2$ homogeneous polynomials in two variables having complex coefficients. Let $Q(X,Y)$ be a polynomial in $V_{k-2}(\mathbb{C})$; given a Dirichlet character $\chi$ modulo $N$, we define $V_{k-2}^\chi(\mathbb{C})$ as the same vector space of polynomials with the right group action $V_{k-2}(\mathbb{C})\times\Gamma_0(N)\to V_{k-2}(\mathbb{C})$ via the map
        $$\left(Q\cdot\begin{pmatrix}
            a & b \\
            c & d
        \end{pmatrix}\right)\begin{pmatrix}
            X & Y
        \end{pmatrix}\mapsto\chi(d)\,Q\left(\begin{pmatrix}
            X & Y
        \end{pmatrix}\begin{pmatrix}
            a & b \\
            c & d
        \end{pmatrix}\right).$$
    \end{definition}
    \begin{remark}
        For fixed $z$ note that $P_{k-2}(z;X,Y)\in V_{k-2}^\chi(\mathbb{C})$, so for $\gamma\in\Gamma_0(N)$ we have $(P_{k-2}\cdot\gamma)(z;X,Y)\in V_{k-2}^\chi(\mathbb{C})$. Additionally, if we fix $X$ and $Y$ then we can view $P_{k-2}(z;X,Y)$ as a function of $z$. With this in mind, one has the relation
        $$(P_{k-2}\cdot\gamma)(z;X,Y)=\chi(\gamma)\,P_{k-2}|_{2-k}\gamma(z;X,Y).$$
        Here we view the left hand side as fixing $z$, applying the group action, and then evaluating at $X$ and $Y$. For the right hand side, we view this as fixing $X$ and $Y$, slashing with respect to the $z$ variable, and then evaluting at the same $z$.
    \end{remark}
    \noindent We now prove some general change of variable formulae on integrals that we will make use of in future sections.
    \begin{lemma}\label{integralCOV1}
        Let $\gamma_1,\gamma_2\in\Gamma_0(q_1q_2)$. We have that
        $$\int_{\gamma_1\infty}^{\gamma_1\gamma_2\infty}E_{\chi_1,\chi_2,k}(z)\,P_{k-2}(z;X,Y)\,dz=\int_\infty^{\gamma_2\infty}E_{\chi_1,\chi_2,k}(z)(P_{k-2}\cdot\gamma_1)(z;X,Y)\,dz.$$
    \end{lemma}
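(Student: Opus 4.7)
The plan is to perform the change of variables $z = \gamma_1 w$ in the left-hand integral and then unfold the transformation properties of both $E_{\chi_1,\chi_2,k}$ and $P_{k-2}$ until the right-hand side emerges. First, under $z \mapsto \gamma_1 w$ with $\gamma_1 = \begin{psmallmatrix} a & b \\ c & d \end{psmallmatrix}$, the differential transforms as $dz = j(\gamma_1, w)^{-2}\,dw$, and the integration endpoints $\gamma_1\infty$ and $\gamma_1\gamma_2\infty$ pull back to $\infty$ and $\gamma_2\infty$ respectively.

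Next, I would apply two transformation formulas to the integrand. The modular transformation of the Eisenstein series from the remark following Definition \ref{youngEisensteinDef} gives $E_{\chi_1,\chi_2,k}(\gamma_1 w) = \psi(\gamma_1)\,j(\gamma_1, w)^k\,E_{\chi_1,\chi_2,k}(w)$. Meanwhile, a direct algebraic computation gives
\[
P_{k-2}(\gamma_1 w; X, Y) = \left(\frac{(aX+cY)w + (bX+dY)}{cw+d}\right)^{k-2} = j(\gamma_1, w)^{-(k-2)}\,P_{k-2}(w; aX+cY, bX+dY).
\]
Multiplying the three automorphy factors produced in these steps yields $j(\gamma_1, w)^k \cdot j(\gamma_1, w)^{-(k-2)} \cdot j(\gamma_1, w)^{-2} = 1$, so they cancel exactly, leaving the integrand as $\psi(\gamma_1)\,E_{\chi_1,\chi_2,k}(w)\,P_{k-2}(w; aX+cY, bX+dY)$.

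Finally, I would recognize this last expression as $E_{\chi_1,\chi_2,k}(w)\,(P_{k-2} \cdot \gamma_1)(w; X, Y)$ by invoking the group action definition with character $\psi = \chi_1\overline{\chi_2}$, since $\psi(\gamma_1) = \psi(d_{\gamma_1})$ by the definition given earlier and the group action is precisely $(Q \cdot \gamma_1)(X, Y) = \psi(d_{\gamma_1})\,Q(aX+cY, bX+dY)$. This completes the identification with the right-hand side. The only subtlety — which is not really an obstacle but requires care — is verifying that the exponents of $j(\gamma_1, w)$ from the three sources (measure, Eisenstein transformation, polynomial scaling) cancel exactly in weight $k$, and that the implicit character in the notation $(P_{k-2}\cdot\gamma_1)$ is indeed taken to be the central character $\psi$ of $E_{\chi_1,\chi_2,k}$; with these bookkeeping checks in place the proof is just a direct unfolding.
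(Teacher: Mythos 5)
Your proof is correct and follows essentially the same route as the paper: the change of variables $z=\gamma_1 w$, the cancellation of the automorphy factors $j(\gamma_1,w)^{k}\cdot j(\gamma_1,w)^{-(k-2)}\cdot j(\gamma_1,w)^{-2}=1$, and the identification of $\psi(\gamma_1)\,P_{k-2}(w;aX+cY,bX+dY)$ with $(P_{k-2}\cdot\gamma_1)(w;X,Y)$ via the group action with character $\psi$. The only difference is notational — the paper phrases the two transformation steps through the slash operators $|_k$ and $|_{2-k}$ and the remark relating $|_{2-k}$ to the group action, while you compute the polynomial transformation directly — which is immaterial.
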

    \begin{proof}
        We can change variables using $\gamma_1u=z$ and $j(\gamma_1,u)^{-2}\,du=dz$; doing so gives
        \begin{align*}
            \int_{\gamma_1\infty}^{\gamma_1\gamma_2\infty}E_{\chi_1,\chi_2,k}(z)P_{k-2}(z;X,Y)\,dz&=\int_\infty^{\gamma_2\infty}j(\gamma_1,u)^kE_{\chi_1,\chi_2,k}|_k\gamma_1(u)\left(\frac{P_{k-2}|_{2-k}\gamma_1(u;X,Y)}{j(\gamma_1,u)^{k-2}}\right)\left(\frac{du}{j(\gamma_1,u)^2}\right) \\
            &=\int_\infty^{\gamma_2\infty}E_{\chi_1,\chi_2,k}|_k\gamma_1(u)\,P_{k-2}|_{2-k}\gamma_1(u;X,Y)\,du \\
            &=\int_\infty^{\gamma_2\infty}\psi(\gamma_1)E_{\chi_1,\chi_2,k}(u)\,P_{k-2}|_{2-k}\gamma_1(u;X,Y)\,du \\
            &=\int_\infty^{\gamma_2\infty}E_{\chi_1,\chi_2,k}(u)(P_{k-2}\cdot\gamma_1)(u;X,Y)\,du
        \end{align*}
        as desired.
    \end{proof}
    \begin{lemma}\label{integralCOV2}
        For $\mathfrak{a},\mathfrak{b}\in\mathbb{P}^1(\mathbb{Q})$ and $\gamma\in\mathrm{SL}_2(\mathbb{R})$ one has
        $$\int_\mathfrak{b}^{\gamma\mathfrak{a}}E_{\chi_1,\chi_2,k}(z)\,P_{k-2}(z;1,-\gamma\mathfrak{a})\,dz=j(\gamma,\mathfrak{a})^{2-k}\int_{\gamma^{-1}\mathfrak{b}}^\mathfrak{a}E_{\chi_1,\chi_2,k}|_k\gamma(z)\,P_{k-2}(z;1,-\mathfrak{a})\,dz.$$
    \end{lemma}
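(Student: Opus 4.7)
The plan is to perform the direct change of variables $z = \gamma w$ on the integral on the left-hand side, then simplify using the standard cocycle identity for $j(\gamma,\cdot)$ together with the behavior of $E_{\chi_1,\chi_2,k}$ under the weight-$k$ slash operator. No identities beyond basic Möbius transformation arithmetic should be required.

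First I would substitute $z=\gamma w$, so that $dz = j(\gamma,w)^{-2}\,dw$, the lower limit $\mathfrak{b}$ becomes $\gamma^{-1}\mathfrak{b}$, and the upper limit $\gamma\mathfrak{a}$ becomes $\mathfrak{a}$. The Eisenstein factor transforms as $E_{\chi_1,\chi_2,k}(\gamma w) = j(\gamma,w)^{k}\,E_{\chi_1,\chi_2,k}|_k\gamma(w)$ directly from Definition \ref{slashDef}. The key algebraic step is the identity
\[
\gamma w - \gamma\mathfrak{a} \;=\; \frac{w-\mathfrak{a}}{j(\gamma,w)\,j(\gamma,\mathfrak{a})},
\]
which follows from writing out $\gamma w = (aw+b)/(cw+d)$ and $\gamma\mathfrak{a} = (a\mathfrak{a}+b)/(c\mathfrak{a}+d)$, clearing denominators, and using $\det\gamma = 1$. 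Raising both sides to the $(k-2)$-th power gives
\[
P_{k-2}(\gamma w;\,1,-\gamma\mathfrak{a}) \;=\; \frac{(w-\mathfrak{a})^{k-2}}{j(\gamma,w)^{k-2}\,j(\gamma,\mathfrak{a})^{k-2}}.
\]

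Combining these three factors, the integrand on the left becomes
\[
j(\gamma,w)^k\,E_{\chi_1,\chi_2,k}|_k\gamma(w)\cdot \frac{(w-\mathfrak{a})^{k-2}}{j(\gamma,w)^{k-2}\,j(\gamma,\mathfrak{a})^{k-2}}\cdot j(\gamma,w)^{-2} \;=\; j(\gamma,\mathfrak{a})^{2-k}\,E_{\chi_1,\chi_2,k}|_k\gamma(w)\,(w-\mathfrak{a})^{k-2},
\]
where the powers of $j(\gamma,w)$ cancel and $j(\gamma,\mathfrak{a})^{2-k}$ pulls outside the integral because it is independent of $w$. Recognizing $(w-\mathfrak{a})^{k-2} = P_{k-2}(w;1,-\mathfrak{a})$ and renaming $w$ back to $z$ yields precisely the right-hand side.

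I do not anticipate a real obstacle here: the statement is essentially a formal consequence of the $\mathrm{SL}_2(\mathbb{R})$-equivariance of the weight-$k$ action on $E_{\chi_1,\chi_2,k}$ combined with the transformation law for the difference $\gamma w - \gamma\mathfrak{a}$. The only point that needs a brief check is that $\mathfrak{a},\mathfrak{b}\in\mathbb{P}^1(\mathbb{Q})$ are allowed endpoints; since $E_{\chi_1,\chi_2,k}$ has exponential decay at cusps (as noted after Definition \ref{eichlerIntegral}), the integrals converge and the change of variables is justified by taking a limit through any path in $\mathcal{H}$ approaching the cuspidal endpoints, with the polynomial factor growing only polynomially.
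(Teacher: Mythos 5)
Your proof is correct and follows essentially the same route as the paper: the change of variables $z=\gamma w$, the weight-$k$ slash relation for $E_{\chi_1,\chi_2,k}$, and the identity $P_{k-2}(\gamma w;1,-\gamma\mathfrak{a})=P_{k-2}(w;1,-\mathfrak{a})/\bigl(j(\gamma,w)^{k-2}\,j(\gamma,\mathfrak{a})^{k-2}\bigr)$, after which the powers of $j(\gamma,w)$ cancel exactly as in the paper. Your added remark about convergence at the cuspidal endpoints is a reasonable supplementary check but not a point of divergence from the paper's argument.
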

    \begin{proof}
        We can change variables using $\gamma u=z$ and $j(\gamma,u)^{-2}\,du=dz$; doing so gives
        $$\int_\mathfrak{b}^{\gamma\mathfrak{a}}E_{\chi_1,\chi_2,k}(z)\,P_{k-2}(z;1,-\gamma\mathfrak{a})\,dz=\int_{\gamma^{-1}\mathfrak{b}}^\mathfrak{a}j(\gamma,u)^k\,E_{\chi_1,\chi_2,k}|_k\gamma(u)\,P_{k-2}(\gamma u;1,-\gamma\mathfrak{a})\left(\frac{du}{j(\gamma,u)^2}\right).$$
        Noting that
        $$P_{k-2}(\gamma u;1,-\gamma\mathfrak{a})=\frac{P_{k-2}(u;1,-\mathfrak{a})}{j(\gamma,u)^{k-2}\,j(\gamma,\mathfrak{a})^{k-2}}$$
        and substituting this into the above completes the proof.
    \end{proof}
    \noindent Now we prove that $\phi_{\chi_1,\chi_2,k}$ is a crossed homomorphism.
    \begin{remark}
        The below proof is standard in the literature for the Eichler-Shimura isomorphism; however, much of the literature focuses on the application to cusp forms. For the context relevant to this work, a good reference on the Eichler-Shimura isomorphism applied in the Eisenstein part can be found in \cite{modularSymbolsEisenstein}. Seen as there is little overlap between the literature on the Eichler-Shimura isomorphism and the literature on Dedekind sums, the inclusion of this proof may help those who are unfamiliar with the Eichler-Shimura isomorphism.
    \end{remark}
    \begin{lemma}\label{crossHom}
         We have that $\phi_{\chi_1,\chi_2,k}$ is a crossed homomorphism; specifically, for $\gamma_1,\gamma_2\in\Gamma_0(q_1q_2)$ with $\gamma_1=\begin{psmallmatrix}
            a & b \\
            c & d
        \end{psmallmatrix}$,
        $$\phi_{\chi_1,\chi_2,k}(\gamma_1\gamma_2,X,Y)=\phi_{\chi_1,\chi_2,k}(\gamma_1,X,Y)+\psi(\gamma_1)\,\phi_{\chi_1,\chi_2,k}(\gamma_2,aX+cY,bX+dY).$$
    \end{lemma}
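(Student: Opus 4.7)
The plan is to expand the integral defining $\phi_{\chi_1,\chi_2,k}(\gamma_1\gamma_2,X,Y)$, split it at the intermediate cusp $\gamma_1\infty$, and recognize the two pieces as the desired summands. Explicitly, since the base point is $\infty$, additivity of integration gives
\[
\phi_{\chi_1,\chi_2,k}(\gamma_1\gamma_2,X,Y)=\int_\infty^{\gamma_1\infty}E_{\chi_1,\chi_2,k}(z)\,P_{k-2}(z;X,Y)\,dz+\int_{\gamma_1\infty}^{\gamma_1\gamma_2\infty}E_{\chi_1,\chi_2,k}(z)\,P_{k-2}(z;X,Y)\,dz,
\]
with convergence at each of the three cusps supplied by the exponential decay of $E_{\chi_1,\chi_2,k}$ noted after Definition \ref{eichlerIntegral}. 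The first integral is exactly $\phi_{\chi_1,\chi_2,k}(\gamma_1,X,Y)$ by definition, so all of the work lies in rewriting the second integral.

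For the second integral I would invoke Lemma \ref{integralCOV1} with $\gamma_1$ playing its role, which converts it into
\[
\int_\infty^{\gamma_2\infty}E_{\chi_1,\chi_2,k}(u)\,(P_{k-2}\cdot\gamma_1)(u;X,Y)\,du.
\]
This is where the character $\psi$ appears: the change of variable $z=\gamma_1 u$ moves $E_{\chi_1,\chi_2,k}$ through the slash operator, and the Eisenstein transformation law $E_{\chi_1,\chi_2,k}|_k\gamma_1=\psi(\gamma_1)\,E_{\chi_1,\chi_2,k}$ is absorbed into the group-action notation on the polynomial via the identity $(P_{k-2}\cdot\gamma)(z;X,Y)=\psi(\gamma)\,P_{k-2}|_{2-k}\gamma(z;X,Y)$ recorded in the remark, using the action associated to $\chi=\psi$.

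Finally I would unpack the group action explicitly. From the definition, writing $\gamma_1=\begin{psmallmatrix}a & b\\ c & d\end{psmallmatrix}$ gives
\[
(P_{k-2}\cdot\gamma_1)(u;X,Y)=\psi(d)\,\bigl((aX+cY)u+(bX+dY)\bigr)^{k-2}=\psi(\gamma_1)\,P_{k-2}(u;aX+cY,bX+dY).
\]
Pulling the scalar $\psi(\gamma_1)$ out of the integral and recognizing the remaining integral as $\phi_{\chi_1,\chi_2,k}(\gamma_2,aX+cY,bX+dY)$ then yields the claimed identity.

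The argument is essentially a bookkeeping exercise, so I do not expect a genuine obstacle; the one place to be careful is consistency of normalizations, namely verifying that the character used to define the action on $V_{k-2}(\mathbb{C})$ is exactly $\psi=\chi_1\overline{\chi_2}$ (the central character of $E_{\chi_1,\chi_2,k}$), so that the $\psi$ produced by the Eisenstein transformation law precisely matches the $\psi$ that one expects to see multiplying $\phi_{\chi_1,\chi_2,k}(\gamma_2,\cdots)$ in the crossed-homomorphism relation.
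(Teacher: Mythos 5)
Your proposal is correct and follows essentially the same route as the paper: split the period integral at the intermediate cusp $\gamma_1\infty$, identify the first piece as $\phi_{\chi_1,\chi_2,k}(\gamma_1,X,Y)$, and apply Lemma \ref{integralCOV1} to the second piece, unpacking the group action $(P_{k-2}\cdot\gamma_1)(u;X,Y)=\psi(\gamma_1)\,P_{k-2}(u;aX+cY,bX+dY)$ to obtain the remaining term. Your explicit check that the action is taken with $\chi=\psi$ is exactly the normalization the paper uses, so nothing is missing.
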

    \begin{proof}
        Noting that $E_{\chi_1,\chi_2,k}(z)P_{k-2}(z;X,Y)$ is holomorphic, by path independence we have that
        \begin{align*}
            \phi_{\chi_1,\chi_2,k}(\gamma_1\gamma_2,X,Y)&=\int_\infty^{\gamma_1\gamma_2\infty}E_{\chi_1,\chi_2,k}(z)P_{k-2}(z;X,Y)\,dz \\
            &=\int_\infty^{\gamma_1\infty}E_{\chi_1,\chi_2,k}(z)P_{k-2}(z;X,Y)\,dz+\int_{\gamma_1\infty}^{\gamma_1\gamma_2\infty}E_{\chi_1,\chi_2,k}(z)P_{k-2}(z;X,Y)\,dz.
        \end{align*}
        Applying Lemma \ref{integralCOV1} we have
        \begin{align*}
            \phi_{\chi_1,\chi_2,k}(\gamma_1\gamma_2,X,Y)&=\int_\infty^{\gamma_1\infty}E_{\chi_1,\chi_2,k}(z)P_{k-2}(z;X,Y)\,dz+\int_\infty^{\gamma_2\infty}E_{\chi_1,\chi_2,k}(z)(P_{k-2}\cdot\gamma_1)(z;X,Y)\,dz \\
            &=\phi_{\chi_1,\chi_2,k}(\gamma_1,X,Y)+\psi(\gamma_1)\,\phi_{\chi_1,\chi_2,k}(\gamma_2,aX+cY,bX+dY)
        \end{align*}
        completing our proof as desired.
    \end{proof}
    \begin{remark}
        From Lemma \ref{crossHom}, $\phi_{\chi_1,\chi_2,k}$ can be viewed as an element of the cohomology group $H^1(\Gamma_0(q_1q_2),V_{k-2}^\psi(\mathbb{C}))$. For further commentary on this aspect of the cohomological properties of Dedekind sums, particularly as it relates to $S_{\chi_1,\chi_2,2}$, see Section $4$ of \cite{majorMajure}.
    \end{remark}
    Noting that $\phi_{\chi_1,\chi_2,2}$ is independent of the choice of $X$ and $Y$ allows us to recover the crossed homomorphism relation for $S_{\chi_1,\chi_2,2}$ from a different approach.
    \begin{corollary}[\cite{SVY}, Lemma $2.2$]\label{crossHomSpecial}
        We can view $S_{\chi_1,\chi_2,2}$ as an element of the space $\text{Hom}(\Gamma_1(q_1q_2),\mathbb{C})$ and we have a crossed homomorphism relation for $S_{\chi_1,\chi_2,2}$ given by
        $$S_{\chi_1,\chi_2,2}(\gamma_1\gamma_2)=S_{\chi_1,\chi_2,2}(\gamma_1)+\psi(\gamma_1)\,S_{\chi_1,\chi_2,2}(\gamma_2).$$
    \end{corollary}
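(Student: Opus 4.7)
The plan is to reduce this corollary to a direct specialization of Lemma \ref{crossHom} at weight $k=2$, exploiting the fact that the polynomial $P_{k-2}$ collapses to a constant in this case.

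First I would observe that when $k=2$, the polynomial $P_{k-2}(z;X,Y) = (Xz+Y)^0 = 1$ is identically equal to $1$ and therefore completely independent of the choice of $X$ and $Y$. Consequently, the period integral $\phi_{\chi_1,\chi_2,2}(\gamma,X,Y)$ depends only on $\gamma$, and by Definition \ref{Sdef} we have the simple identification $S_{\chi_1,\chi_2,2}(\gamma) = \tau(\overline{\chi_1})\,\phi_{\chi_1,\chi_2,2}(\gamma,1,-a/c)$ where the second and third arguments may be replaced by anything without changing the value.

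Next, I would apply Lemma \ref{crossHom} directly at $k=2$ to obtain
\[
\phi_{\chi_1,\chi_2,2}(\gamma_1\gamma_2, X, Y) = \phi_{\chi_1,\chi_2,2}(\gamma_1, X, Y) + \psi(\gamma_1)\,\phi_{\chi_1,\chi_2,2}(\gamma_2, aX+cY, bX+dY).
\]
Since each term is independent of its polynomial arguments in the $k=2$ case, we may suppress them entirely; then multiplying through by $\tau(\overline{\chi_1})$ yields the stated crossed homomorphism relation for $S_{\chi_1,\chi_2,2}$.

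Finally, to identify $S_{\chi_1,\chi_2,2}$ as an element of $\text{Hom}(\Gamma_1(q_1q_2), \mathbb{C})$, I would note that for any $\gamma_1 \in \Gamma_1(q_1q_2)$, the lower-right entry satisfies $d_{\gamma_1} \equiv 1 \pmod{q_1q_2}$, so $\psi(\gamma_1) = \chi_1\overline{\chi_2}(d_{\gamma_1}) = 1$. The crossed homomorphism relation on $\Gamma_0(q_1q_2)$ thus collapses to an honest homomorphism when restricted to $\Gamma_1(q_1q_2)$. There is no substantive obstacle here; the entire content of the corollary is packaged inside Lemma \ref{crossHom}, and the only thing to verify is that the $V_{k-2}^\psi(\mathbb{C})$-valued cocycle becomes scalar-valued at $k=2$ and becomes a genuine homomorphism upon passing from $\Gamma_0$ to $\Gamma_1$.
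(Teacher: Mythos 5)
Your proposal is correct and follows essentially the same route as the paper: specialize Lemma \ref{crossHom} at $k=2$, use that $P_0(z;X,Y)=1$ makes $\phi_{\chi_1,\chi_2,2}$ independent of $X,Y$, rescale by the factor from Definition \ref{Sdef} (which is indeed just $\tau(\overline{\chi_1})$ when $k=2$), and note that $\psi$ is trivial on $\Gamma_1(q_1q_2)$ to land in $\mathrm{Hom}(\Gamma_1(q_1q_2),\mathbb{C})$. No gaps to flag.
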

    \begin{proof}
        Note that the period integral is independent of $X$ and $Y$ when $k=2$. So it follows that given $\gamma_1,\gamma_2\in\Gamma_0(q_1q_2)$ with $\gamma_1=\begin{psmallmatrix}
            a & b \\
            c & d
        \end{psmallmatrix}$ then we have that 
        $$\phi_{\chi_1,\chi_2,2}(\gamma_1,X,Y)=\int_\infty^{\gamma_1\infty}E_{\chi_1,\chi_2,2}(z)P_0(z;X,Y)\,dz=\int_\infty^{\gamma_1\infty}E_{\chi_1,\chi_2,2}(z)P_0(z;1,-a/c)\,dz=\frac{S_{\chi_1,\chi_2,2}(\gamma_1)}{(-1)^k\,\tau(\overline{\chi_1})(k-1)}.$$
        Using this independence in combination with Lemma \ref{crossHom} we have
        \begin{align*}
            \phi_{\chi_1,\chi_2,2}(\gamma_1\gamma_2,X,Y)&=\phi_{\chi_1,\chi_2,2}(\gamma_1,X,Y)+\psi(\gamma_1)\,\phi_{\chi_1,\chi_2,2}(\gamma_2,aX+cY,bX+dY) \\
            &=\phi_{\chi_1,\chi_2,2}(\gamma_1,X,Y)+\psi(\gamma_1)\int_\infty^{\gamma_2\infty}E_{\chi_1,\chi_2,2}(z)P_0(z;aX+cY,bX+dY)\,dz \\
            &=\phi_{\chi_1,\chi_2,2}(\gamma_1,X,Y)+\psi(\gamma_1)\int_\infty^{\gamma_2\infty}E_{\chi_1,\chi_2,2}(z)P_0(z;X,Y)\,dz \\
            &=\phi_{\chi_1,\chi_2,2}(\gamma_1,X,Y)+\psi(\gamma_1)\,\phi_{\chi_1,\chi_2,2}(\gamma_2,X,Y).
        \end{align*}
        This explicitly gives a crossed homomorphism relation for $S_{\chi_1,\chi_2,2}$ up to scalar multiple. So we have
        $$S_{\chi_1,\chi_2,2}(\gamma_1\gamma_2)=S_{\chi_1,\chi_2,2}(\gamma_1)+\psi(\gamma_1)\,S_{\chi_1,\chi_2,2}(\gamma_2).$$
        Noting that $\psi(\gamma_1)$ is trivial on $\Gamma_1(q_1q_2)$ we also have that $S_{\chi_1,\chi_2,2}\in\text{Hom}(\Gamma_1(q_1q_2),\mathbb{C})$. So we recover all of the results of Lemma $2.2$ in \cite{SVY} as desired.
    \end{proof}

\section{A Non-Trivial Upper Bound on $\abs{S_{\chi_1,\chi_2,k}}$}
    % Now to prove our non-trivial bound on $S_{\chi_1,\chi_2,k}$ we first derive an alternative independent point formula for $\phi_{\chi_1,\chi_2,k}(\gamma,1,-a/c)$ as we alluded to in the introduction.
    
    \subsection{Higher Weight Independent Point Formula}\label{cobproof}
    We now derive an analogue of the independent point formula for $\phi_{\chi_1,\chi_2,k}(\gamma,1,-a/c)$ as we alluded to in the introduction with (\ref{basicIndepPointFormula}).
    \begin{lemma}\label{cobformula}
        For all $z_1\in\mathcal{H}$ we have that
        \begin{align*}
            \phi_{\chi_1,\chi_2,k}(\gamma,1,-a/c)&=-\frac{\psi(d)}{\pi i}\left(-\frac{1}{c}\right)^{k-2}\sum_{1\leq A}\frac{\chi_1(A)}{A}\sum_{1\leq B}\overline{\chi_2}(B)B^{k-2}e(ABz_1) \\
            &\qquad-2\sum_{n=0}^{k-2}\frac{(\gamma z_1-a/c)^{k-n-2}\,(k-2)!}{(-2\pi i)^{n+1}\,(k-n-2)!}\sum_{1\leq A}\frac{\chi_1(A)}{A^{n+1}}\sum_{1\leq B}\overline{\chi_2}(B)B^{k-n-2}e(AB\gamma z_1).
        \end{align*}
        Note that the left hand side is independent of our choice of $z_1\in\mathcal{H}$.
    \end{lemma}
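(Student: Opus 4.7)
Plan: the independent-point formula should fall out by splitting the integral defining $\phi_{\chi_1,\chi_2,k}(\gamma,1,-a/c) = \int_\infty^{\gamma\infty} E_{\chi_1,\chi_2,k}(z)(z-a/c)^{k-2}\,dz$ at the interior point $\gamma z_1 \in \mathcal{H}$:
\[
\phi_{\chi_1,\chi_2,k}(\gamma,1,-a/c) = \int_\infty^{\gamma z_1} E_{\chi_1,\chi_2,k}(z)(z-a/c)^{k-2}\,dz + \int_{\gamma z_1}^{\gamma\infty} E_{\chi_1,\chi_2,k}(z)(z-a/c)^{k-2}\,dz,
\]
which is valid by path independence since the integrand is holomorphic. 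I would then handle the first piece with the explicit antiderivative from Lemma \ref{generalIntegralFormula}, and the second with a change of variables $z = \gamma w$ that reduces it to a bare integral of $E_{\chi_1,\chi_2,k}$ computable directly from its Fourier expansion.

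For the first integral I plan to apply Lemma \ref{generalIntegralFormula} with $(X,Y) = (1,-a/c)$ and lower limit $s$ in place of $\infty$, then let $\mathrm{Im}(s) \to \infty$. In this limit every $n$-term in the antiderivative at $s$ vanishes because $e(Ns)$ decays exponentially while $(s-a/c)^{k-n-2}$ grows only polynomially, so only the evaluation at $\gamma z_1$ survives. Specializing $\tfrac{d^n}{dz^n}(z-a/c)^{k-2} = \tfrac{(k-2)!}{(k-n-2)!}(z-a/c)^{k-n-2}$ and relabeling $N = AB$ in the outer double sum reproduces the second displayed summand of the claim verbatim.

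For the second integral, substituting $z = \gamma w$ gives $dz = dw/(cw+d)^2$, $E_{\chi_1,\chi_2,k}(\gamma w) = \psi(\gamma)(cw+d)^k E_{\chi_1,\chi_2,k}(w)$, and the identity $\gamma w - a/c = -1/(c(cw+d))$ (from $ad-bc=1$) makes all powers of $cw+d$ cancel, yielding
\[
\int_{\gamma z_1}^{\gamma\infty} E_{\chi_1,\chi_2,k}(z)(z-a/c)^{k-2}\,dz = \frac{\psi(\gamma)(-1)^{k-2}}{c^{k-2}}\int_{z_1}^\infty E_{\chi_1,\chi_2,k}(w)\,dw.
\]
The inner integral is evaluated by plugging in the Fourier series of Lemma \ref{eisensteinSeries} and integrating each mode $e(Nw)$ along a vertical ray to get $-e(Nz_1)/(2\pi i N)$; rewriting $N = AB$ yields $-\tfrac{1}{\pi i}\sum_{A\geq 1}\tfrac{\chi_1(A)}{A}\sum_{B\geq 1}\overline{\chi_2}(B) B^{k-2} e(ABz_1)$, and assembling with the prefactor (using $\psi(\gamma) = \psi(d)$ and $(-1)^{k-2}/c^{k-2} = (-1/c)^{k-2}$) gives the first displayed summand.

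There is no substantive conceptual obstacle: both techniques are already deployed elsewhere in the paper (Lemma \ref{generalIntegralFormula} and the change of variables of Lemma \ref{integralCOV1}). The only real labor is careful bookkeeping of signs and the various powers of $-1$, $2\pi i$, and $c$ so that the two boundary contributions assemble into exactly the claimed form. Convergence of the intermediate pieces is immediate from the exponential decay of $E_{\chi_1,\chi_2,k}$ at the cusp $\infty$, which in the second piece is restored precisely by the change of variables.
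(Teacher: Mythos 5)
Your proposal is correct and follows essentially the same route as the paper: split the period integral at $\gamma z_1$, pull the arc ending at $\gamma\infty$ back by $\gamma$ (the change of variables of Lemma \ref{integralCOV1}), under which the specialized polynomial collapses to the constant $\psi(d)(-1/c)^{k-2}$, and evaluate both resulting pieces by termwise integration of the Fourier expansion (Lemma \ref{generalIntegralFormula}), with the contributions at the cusp $\infty$ vanishing by exponential decay. The only difference is organizational: the paper regularizes with a moving base point $z_0=-d/c+i/(c^2u)$ and a three-arc split through $z_1$ and $\gamma z_1$, whereas you integrate directly between the cusps and split once, which is an equivalent bookkeeping of the same computation.
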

    \begin{remark}
        Recall that by definition $S_{\chi_1,\chi_2,k}(\gamma)$ and $\phi_{\chi_1,\chi_2,k}(\gamma,1,-a/c)$ differ by only a scalar factor, so the above serves as an independent point formula for $S_{\chi_1,\chi_2,k}(\gamma)$ as well.
    \end{remark}
    \begin{proof}
        Note that for all $z_0,z_1\in\mathcal{H}$ we have that
        \begin{align*}
            &\int_{z_0}^{\gamma z_0}E_{\chi_1,\chi_2,k}(z)P_{k-2}(z;X,Y)\,dz \\
            &\qquad\qquad=\int_{z_0}^{z_1}E_{\chi_1,\chi_2,k}(z)P_{k-2}(z;X,Y)\,dz+\int_{z_1}^{\gamma z_1}E_{\chi_1,\chi_2,k}(z)P_{k-2}(z;X,Y)\,dz+\int_{\gamma z_1}^{\gamma z_0}E_{\chi_1,\chi_2,k}(z)P_{k-2}(z;X,Y)\,dz
        \end{align*}
        by Cauchy's integral theorem. Now focusing on the last integral, by Lemma \ref{integralCOV1} one has that
        $$\int_{\gamma z_1}^{\gamma z_0}E_{\chi_1,\chi_2,k}(z)\,P_{k-2}(z;X,Y)\,dz=-\int_{z_0}^{z_1}E_{\chi_1,\chi_2,k}(z)(P_{k-2}\cdot\gamma)(z;X,Y)\,dz.$$
        Thus we have that
        \begin{align*}
            &\int_{z_0}^{\gamma z_0}E_{\chi_1,\chi_2,k}(z)P_{k-2}(z;X,Y)\,dz \\
            &\qquad\qquad=\int_{z_1}^{\gamma z_1}E_{\chi_1,\chi_2,k}(z)P_{k-2}(z;X,Y)\,dz+\int_{z_0}^{z_1}E_{\chi_1,\chi_2,k}(z)(P_{k-2}(z;X,Y)-(P_{k-2}\cdot\gamma)(z;X,Y))\,dz.
        \end{align*}
        Now by taking $z_0=-d/c+i/(c^2u)$, $\gamma z_0=a/c+iu$, and specializing $X=1$ and $Y=-a/c$, then in the limit note
        \begin{align*}
            &\phi_{\chi_1,\chi_2,k}(\gamma,1,-a/c) \\
            &\qquad\qquad=\int_{z_1}^{\gamma z_1}E_{\chi_1,\chi_2,k}(z)P_{k-2}(z;1,-a/c)\,dz+\lim_{u\to 0^+}\int_{z_0}^{z_1}E_{\chi_1,\chi_2,k}(z)(P_{k-2}(z;1,-a/c)-\psi(d)(-1/c)^{k-2})\,dz
        \end{align*}
        for all $z_1\in\mathcal{H}$ independent of choice. Applying linearity we have that
        \begin{equation}\label{returnToStep}
            \phi_{\chi_1,\chi_2,k}(\gamma,1,-a/c)=\lim_{u\to 0^+}\int_{z_0}^{\gamma z_1}E_{\chi_1,\chi_2,k}(z)P_{k-2}(z;1,-a/c)\,dz-\psi(d)\left(-\frac{1}{c}\right)^{k-2}\lim_{u\to 0^+}\int_{z_0}^{z_1}E_{\chi_1,\chi_2,k}(z)\,dz.
        \end{equation}
        
        Focusing on the first integral of (\ref{returnToStep}) and applying Lemma \ref{generalIntegralFormula} gives
        $$\int_{z_0}^{\gamma z_1}E_{\chi_1,\chi_2,k}(z)P_{k-2}(z;1,-a/c)\,dz=2\sum_{1\leq N}\sum_{A\mid N}\sum_{n=0}^{k-2}\left(\frac{\chi_1(A)\overline{\chi_2}(N/A)N^{k-n-2}}{-A^{k-1}(-2\pi i)^{n+1}}\left(\frac{d^n}{dz^n}(z-a/c)^{k-2}\right)e(Nz)\right)\Bigg|_{z_0}^{\gamma z_1}.$$
        Note that as $u\to 0^+$, we have polynomial growth of $(d^n/dz^n)(z_0-a/c)^{k-2}$ and exponential decay of $e(Nz_0)$. So every term in this sum vanishes in the limit $u\to 0^+$ at $z_0$, thus we are left with
        $$\lim_{u\to 0^+}\int_{z_0}^{\gamma z_1}E_{\chi_1,\chi_2,k}(z)P_{k-2}(z;1,-a/c)\,dz=2\sum_{1\leq N}\sum_{A\mid N}\sum_{n=0}^{k-2}\frac{\chi_1(A)\overline{\chi_2}(N/A)N^{k-n-2}\,(k-2)!}{-A^{k-1}(-2\pi i)^{n+1}\,(k-n-2)!}\,(\gamma z_1-a/c)^{k-n-2}\,e(N\gamma z_1).$$
        After re-indexing this yields the second main term in Theorem \ref{cobformula}, namely
        \begin{align*}
            &\lim_{u\to 0^+}\int_{z_0}^{\gamma z_1}E_{\chi_1,\chi_2,k}(z)P_{k-2}(z;1,-a/c)\,dz \\
            &\qquad\qquad=-2\sum_{n=0}^{k-2}\frac{(\gamma z_1-a/c)^{k-n-2}\,(k-2)!}{(-2\pi i)^{n+1}\,(k-n-2)!}\sum_{1\leq A}\frac{\chi_1(A)}{A^{n+1}}\sum_{1\leq B}\overline{\chi_2}(B)B^{k-n-2}e(AB\gamma z_1).
        \end{align*}

        Now focusing on the second integral of (\ref{returnToStep}), we note that this is just a special case of the above integral where $k=2$ and the endpoint is $z_1$ instead of $\gamma z_1$. So we have
        $$\lim_{u\to 0^+}\int_{z_0}^{z_1}E_{\chi_1,\chi_2,k}(z)\,dz=\frac{1}{\pi i}\sum_{1\leq A}\frac{\chi_1(A)}{A}\sum_{1\leq B}\overline{\chi_2}(B)B^{k-2}e(ABz_1).$$
        Substituting these two integrals into (\ref{returnToStep}) completes the proof.
    \end{proof}

    \subsection{Literature Results for the Proof of Theorem \ref{nontrivialbound}}
    In this section we note results from \cite{korobov}, \cite{hensley}, and \cite{cy24} that will be used to prove Theorem \ref{nontrivialbound}. First, recalling Definition \ref{partialQuoDef}, we present a result from \cite{korobov}.
    \begin{lemma}[\cite{korobov}, Lemma $3$ and ($19$)]\label{korobovBounds}
        Let $(a,c)=1$ and $c'=c/q_2$. Then we have that
        $$\sum_{A=1}^{c'-1}\norm{\frac{Aa}{c'}}^{-1}\leq 2c'\log c',\qquad\sum_{A=1}^{c'-1}\frac{1}{A\,\norm{Aa/c'}}\leq 18M(a/c')\,\log^2 c',\qquad\text{and}\qquad\sum_{A=1}^j\norm{\frac{Aa}{c'}}^{-1}\leq 16jM(a/c')\,\log c'$$
        where $j$ is arbitrary and $\norm{\cdot}$ is the distance to nearest integer function.
    \end{lemma}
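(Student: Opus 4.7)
Since the statement is quoted directly from Korobov \cite{korobov}, my plan is to refer to the original paper for the full proof while sketching the essential ideas. Throughout I may assume $(a,c')=1$, which follows from $(a,c)=1$ and $c'\mid c$.

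The first inequality is the most elementary. Because $(a,c')=1$, the map $A\mapsto Aa\bmod c'$ permutes $\{1,\ldots,c'-1\}$, so $\|Aa/c'\|$ takes each value $\min(r,c'-r)/c'$ with $r\in\{1,\ldots,c'-1\}$ exactly once. Consequently
$$\sum_{A=1}^{c'-1}\norm{\frac{Aa}{c'}}^{-1}=c'\sum_{r=1}^{c'-1}\frac{1}{\min(r,c'-r)}\leq 2c'\sum_{r=1}^{\lfloor c'/2\rfloor}\frac{1}{r}\leq 2c'\log c',$$
using the standard harmonic-sum bound.

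The second and third inequalities are where $M(a/c')$ enters, and both rest on the continued fraction expansion $a/c'=[a_0;a_1,\ldots,a_s]$ with convergents $p_n/q_n$ satisfying the recursion $q_{n+1}=a_{n+1}q_n+q_{n-1}$, so in particular $q_{n+1}\leq (M(a/c')+1)\,q_n$. The starting observation is that for $A$ in a range $q_n\leq A<q_{n+1}$ one has the classical estimate $\|Aa/c'\|^{-1}\ll q_{n+1}$, while the number of such $A$ is $q_{n+1}-q_n$. Summing over the $O(\log c')$ convergents up to $c'$ (which is bounded since the $q_n$ grow at least like a Fibonacci-type sequence) yields the third inequality, each block contributing a factor controlled by $M(a/c')$. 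The second inequality then follows by grouping $A\in[1,c'-1]$ into dyadic blocks $[2^j,2^{j+1})$, applying the third inequality to each block to estimate $\sum\|Aa/c'\|^{-1}$, and then absorbing the weight $1/A\asymp 2^{-j}$; the $O(\log c')$ dyadic blocks each contribute $O(M(a/c')\log c')$, producing the $\log^2 c'$ factor.

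The main obstacle is keeping track of sharp constants — the explicit $2$, $18$, and $16$ — during the dyadic decomposition. The skeleton above gives the right scaling in $c'$ and $M(a/c')$, but the numerical constants require careful handling of the transition from convergent $q_n$ to $q_{n+1}$, of the case where $A$ is not comparable to a single $q_n$, and of harmonic-sum tails. For these details Korobov's original treatment in \cite{korobov} remains the cleanest reference, and I would ultimately cite it rather than reproduce the calculation.
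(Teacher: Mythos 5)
This lemma is quoted in the paper purely as a literature result (Korobov, Lemma 3 and (19)) with no proof supplied, so your plan to cite Korobov and only sketch the ideas is exactly the paper's approach. One small caution about the sketch itself: for the third bound, a pointwise estimate $\norm{Aa/c'}^{-1}\ll q_{n+1}$ multiplied by the $q_{n+1}-q_n$ terms of a block overshoots badly (it can reach order $c'^2$); the genuine argument uses the spacing of the fractional parts $\{Aa/c'\}$ within each block of length $q_n$, but since you defer to Korobov for the details this does not affect the correctness of your citation-based proof.
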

    \noindent Now we use the bounds of Lemma \ref{korobovBounds} to deduce a more general bound on sums of similar type.
    \begin{corollary}\label{genKorobovBounds} Let $(a,c)=1$ and $c'=c/q_2$. For $K\geq 0$ we have that
        $$\sum_{A=1}^{c'-1}\frac{1}{A^{K+1}\,\norm{Aa/c'}}\leq 18M(a/c')\,\log^2c'.$$
    \end{corollary}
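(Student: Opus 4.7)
The plan is to reduce the corollary directly to the middle inequality of Lemma \ref{korobovBounds}. The key observation is that for $A \geq 1$ and $K \geq 0$ one has $A^{K+1} \geq A$, hence $1/A^{K+1} \leq 1/A$. Applying this termwise inside the sum gives
\[
\sum_{A=1}^{c'-1}\frac{1}{A^{K+1}\,\norm{Aa/c'}}\leq\sum_{A=1}^{c'-1}\frac{1}{A\,\norm{Aa/c'}},
\]
at which point the desired bound $18M(a/c')\,\log^2 c'$ is exactly the statement of the second inequality in Lemma \ref{korobovBounds}.

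In short, the corollary is essentially a trivial monotonicity enhancement of Korobov's estimate: it extends the $K=0$ case, which is Lemma \ref{korobovBounds}, to all $K \geq 0$ at no cost because introducing the extra factor $A^{-K}$ only shrinks each summand. There is no real obstacle to overcome; the corollary is stated separately purely so that later in the proof of Theorem \ref{nontrivialbound} one can invoke a single uniform-in-$K$ bound on sums of the form $\sum_A A^{-(K+1)} \norm{Aa/c'}^{-1}$ that appear when controlling the inner sums in the independent point formula of Lemma \ref{cobformula}.
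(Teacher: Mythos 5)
Your proposal is correct and is exactly the paper's argument: the paper's proof of Corollary \ref{genKorobovBounds} likewise cites the middle inequality of Lemma \ref{korobovBounds}, with the termwise bound $A^{-(K+1)}\leq A^{-1}$ being the implicit ``immediate'' step you have simply spelled out.
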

    \begin{proof}
        This immediately follows from the middle inequality of Lemma \ref{korobovBounds}.
    \end{proof}
    \begin{remark}
        One can do better than this bound for $K\geq 1$; however, doing so doesn't help our bound in (\ref{nonTrivialBoundEqu}) as we will always have a $K=0$ term which dominates.
        % It may seem reasonable to assume we could get a tighter bound by using similar techniques as in the proof of Lemma \ref{korobovBounds}; however, this turns out not to be the case. Following the method in \cite{korobov}, we apply Abel's summation formula for $K\geq 1$ to get
        % $$\sum_{A=1}^{c'-1}\frac{1}{A^{K+1}\,\norm{Aa/c'}}\leq 2M(a/c')\,(c'^{-K}+8\,\zeta(K+1))\log c'$$
        % One can use this tighter estimate to get a slightly better upper bound on the size of $\abs{S_{\chi_1,\chi_2,k}}$ but it does little improvement and makes the bound needlessly complex, as there will always be terms requiring the $K=0$ growth.
    \end{remark}
    
    Now we introduce a few results of \cite{hensley} and \cite{cy24} relating to partial quotients of continued fraction expansions.
    \begin{lemma}[\cite{cy24}, Corollary $2.4$]\label{partialQuotientDiff}
        Let $\gamma=\begin{psmallmatrix}
            a & b \\
            c & d
        \end{psmallmatrix}\in\Gamma_0(q_1q_2)$ with $c'=c/q_2$. Then $$M(a/c')=M(d/c')+\delta$$
        where $\abs{\delta}\leq 1$.
    \end{lemma}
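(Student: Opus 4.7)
The plan is to exploit the classical reversal identity for continued fractions, which relates the expansion of a reduced fraction $p/q$ to that of $p^{-1}/q$, where $p^{-1}$ denotes a modular inverse of $p$ modulo $q$. Since $\gamma\in\Gamma_0(q_1q_2)$ gives $ad-bc=1$ with $q_2\mid c$, reduction yields $ad\equiv 1\pmod{c'}$, so $a$ and $d$ are mutually inverse modulo $c'$. Observe first that $M(q)$ depends only on the fractional part of $q$, since adding an integer to $q$ only changes $a_0$. We may therefore replace $a,d$ by their reductions $a',d'\in\{1,\ldots,c'-1\}$ modulo $c'$ (the case $c'=1$ being vacuous), with $a'd'\equiv 1\pmod{c'}$ and $M(a/c')=M(a'/c')$, $M(d/c')=M(d'/c')$.

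Next I write the canonical expansion $a'/c'=[0;a_1,\ldots,a_n]$ with $a_n\geq 2$ and let $p_k/q_k$ denote its convergents. From $p_nq_{n-1}-p_{n-1}q_n=(-1)^{n-1}$ together with $(p_n,q_n)=(a',c')$ we obtain $a'q_{n-1}\equiv(-1)^{n-1}\pmod{c'}$, so $d'\equiv(-1)^{n-1}q_{n-1}\pmod{c'}$. Iterating the recurrence $q_k=a_kq_{k-1}+q_{k-2}$ downward gives the reversed expansion $q_{n-1}/c'=[0;a_n,a_{n-1},\ldots,a_1]$. Splitting on the parity of $n$: if $n$ is odd then $d'=q_{n-1}$, and the partial quotients of $d'/c'$ form a permutation of those of $a'/c'$, yielding $\delta=0$. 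If $n$ is even then $d'=c'-q_{n-1}$, so $d'/c'=1-[0;a_n,\ldots,a_1]$, and the identity $1-[0;b_1,b_2,\ldots]=[0;1,b_1-1,b_2,\ldots]$ (valid for $b_1\geq 2$, and easily verified by direct manipulation of $1-1/(b_1+y)$) produces the partial quotients $\{1,a_n-1,a_{n-1},\ldots,a_1\}$ for $d'/c'$. A direct comparison of these multisets yields $M(d'/c')\in\{M(a'/c'),M(a'/c')-1\}$, giving $|\delta|\leq 1$.

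The main obstacle is carrying out the even-$n$ case analysis without losing track of the effects of decrementing $a_n$ and prepending a new partial quotient $1$: these alter the multiset of partial quotients but change the maximum by at most one, as the direct comparison above shows. Small-$n$ edge cases (notably $n=1$, where $a'=d'=1$ and the claim is trivial) and the mild ambiguity in Definition \ref{partialQuoDef} arising from the two admissible representations $[a_0;\ldots,a_n]=[a_0;\ldots,a_n-1,1]$ are routine, since that swap itself changes $M$ by at most $1$ and is absorbed into the final bound. Combining the two parity cases then yields the claimed inequality.
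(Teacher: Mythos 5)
The paper itself does not prove this lemma; it is imported verbatim from \cite{cy24}, Corollary $2.4$, so there is no internal proof to compare against. Your argument is the standard self-contained one and its skeleton is sound: from $ad-bc=1$ and $q_2\mid c$ you get $ad\equiv 1\pmod{c'}$; $M$ only sees the fractional part, so you may reduce to $a',d'\in\{1,\dots,c'-1\}$; and the mirror identity $q_{n-1}/q_n=[0;a_n,\dots,a_1]$ for the convergents of $a'/c'=[0;a_1,\dots,a_n]$, combined with $d'\equiv(-1)^{n-1}q_{n-1}\pmod{c'}$ and the identity $1-[0;b_1,b_2,\dots]=[0;1,b_1-1,b_2,\dots]$, identifies the partial quotients of $d'/c'$ with a reversal of those of $a'/c'$, up to one decremented entry and one prepended $1$. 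This is presumably the same mechanism as in \cite{cy24}; the value of your write-up is that it is self-contained.

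One step needs tightening. Your parity-case conclusions ($\delta=0$ for odd $n$, and $M(d'/c')\in\{M(a'/c'),M(a'/c')-1\}$ for even $n$) are read off from the reversed expansion, whose final partial quotient is $a_1$; when $a_1=1$ that expansion is not the canonical one referenced in Definition \ref{partialQuoDef}, and merging the trailing $1$ can raise the maximum. Concretely, $a'/c'=7/9=[0;1,3,2]$ has $M=3$, while $d'/c'=4/9=[0;2,4]$ has $M=4$, so even the odd case can give $\delta=-1$. Your closing remark that the $[a_0;\dots,a_n]=[a_0;\dots,a_n-1,1]$ ambiguity ``changes $M$ by at most $1$ and is absorbed'' is too loose as stated: stacked naively on the even-case value $M-1$ it would only yield $\abs{\delta}\leq 2$. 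The bound survives because the merge $[\dots,m,1]\mapsto[\dots,m+1]$ replaces the entries $m$ and $1$ by $m+1$ and therefore can never decrease the maximum, while increasing it by at most $1$; hence the canonical maximum for $d'/c'$ lies in $\{M-1,M,M+1\}$ in every case. Make that directional observation explicit (and note that your intermediate multiset claims refer to the possibly non-canonical representation) and the proof is complete.
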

    \noindent A result of \cite{hensley} allows us to describe how often $M(d/c')$ and $M(a/c')$ are less than size $\log c'$.
    \begin{lemma}[\cite{hensley}, Theorem $1$]\label{hensleyDensity}
        Define
        $$\Phi(\alpha,C)=\abs{\{(r,s):1<r<s\leq C,\,\gcd(r,s)=1,\,M(r/s)\leq\alpha\,\log C\}}.$$
        We have that
        $$\Phi(\alpha,C)=\frac{3}{\pi^2}C^2\exp\left(-\frac{12}{\alpha\pi^2}\right)\left(1+O\left(\left(1+\frac{1}{\alpha^2}\right)\exp\left(\frac{24}{\alpha\pi^2}\right)\frac{\log\log C}{\log C}\right)\right)$$
        uniformly in $\alpha>4/\log\log C$ as $C\to\infty$.
    \end{lemma}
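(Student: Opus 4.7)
The plan is to prove Hensley's result via the spectral theory of the Gauss--Kuzmin--Wirsing transfer operator restricted to continued fractions whose partial quotients are bounded, combined with perturbation analysis as the bound grows. First I would reduce the counting to continued fractions. Write $K=\lfloor\alpha\log C\rfloor$. Each reduced fraction $r/s$ with $1<r<s$ corresponds uniquely to a finite continued fraction $r/s=[0;a_1,\ldots,a_n]$ with $a_n\geq 2$, and by Definition \ref{partialQuoDef} the condition $M(r/s)\leq K$ is exactly $\max_i a_i\leq K$; moreover $s=q_n(a_1,\ldots,a_n)$, the denominator of the $n$th convergent. Hence, up to $O(1)$ boundary terms,
$$\Phi(\alpha,C)=\#\{(a_1,\ldots,a_n):n\geq 1,\,1\leq a_i\leq K,\,a_n\geq 2,\,q_n\leq C\}.$$

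Next I would introduce the parametric transfer operator on a Banach space of functions holomorphic on a disk containing $[0,1]$,
$$\mathcal{L}_{K,s}f(x)=\sum_{k=1}^{K}(k+x)^{-2s}\,f\!\left(\frac{1}{k+x}\right),$$
which by the standard Ruelle--Mayer nuclear operator theory is compact, depends holomorphically on $s$, and has a simple dominant positive eigenvalue $\lambda_K(s)$ separated from the rest of the spectrum by a genuine gap. Iterates encode the cylinder sums $\sum q_n^{-2s}$ over depth-$n$ expansions, and Bowen's formula identifies the Hausdorff dimension $s(K)$ of the Cantor set $E_K\subset[0,1]$ of irrationals with all partial quotients in $\{1,\ldots,K\}$ as the unique real $s$ with $\lambda_K(s)=1$.

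I would then extract the asymptotic $2\,s(K)=2-\tfrac{12}{\pi^{2}K}+O(K^{-2})$ by analytic perturbation of the full Gauss--Kuzmin--Wirsing operator $\mathcal{L}_{\infty,1}$: the latter has eigenvalue $1$ with Gauss eigenfunction $h(x)=1/((1+x)\log 2)$, and the telescoping identity gives $(\mathcal{L}_{\infty,1}h-\mathcal{L}_{K,1}h)(x)=1/((K+1+x)\log 2)$, whose integral against the constant left eigenvector yields $1-\lambda_K(1)=1/(K\log 2)+O(K^{-2})$. Dividing by $\partial_s\lambda_\infty(s)|_{s=1}=-\pi^{2}/(6\log 2)$ (computed from the Kuzmin spectrum, and equal up to sign to the metric entropy of the Gauss map) delivers the expansion for $s(K)$. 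An effective Tauberian theorem (Delange, or Wiener--Ikehara with a spectral-gap remainder) applied to the dynamical Dirichlet series $\sum_n\langle\mathcal{L}_{K,s}^{\,n}\mathbf{1},\mathbf{1}\rangle$, whose rightmost pole at $s=s(K)$ has residue $\rho_K\to 3/\pi^{2}$ at rate $O(1/K)$, gives $\Phi(\alpha,C)=\rho_K\,C^{2s(K)}(1+\mathcal{E}_K(C))$. Substituting $K=\lfloor\alpha\log C\rfloor$ and expanding
$$C^{2s(K)}=C^{2}\exp\!\left(-\frac{12}{\pi^{2}\alpha}\right)\bigl(1+O(\log C/K^{2})\bigr)$$
produces the advertised main term, and combining this with the Tauberian remainder and the $O(1/K)$ error in $\rho_K$ gives the stated $(1+1/\alpha^{2})\exp(24/(\pi^{2}\alpha))\log\log C/\log C$ error.

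The hard part is uniformity. The spectral gap of $\mathcal{L}_{K,1}$ shrinks like $1/K$ as $K\to\infty$, and effective Tauberian remainders are quadratic in the inverse gap, so the $\exp(24/(\pi^{2}\alpha))$ factor arises precisely from tracking how the spectral gap enters the Tauberian bound. Verifying that, in the regime $\alpha>4/\log\log C$ (equivalently $K>4\log C/\log\log C$), the amplified error remains dominated by the $\log\log C/\log C$ factor of the Tauberian input is the delicate step, because the lower bound on $\alpha$ is exactly what makes perturbation from $\mathcal{L}_{\infty,1}$ valid while keeping the resulting error genuinely lower order than $\exp(-12/(\pi^{2}\alpha))$.
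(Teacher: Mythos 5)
The paper offers no proof of this lemma: it is imported verbatim (up to notation) from Hensley's paper, so there is no internal argument to compare yours against. Judged on its own terms, your sketch correctly assembles several genuine ingredients of the transfer-operator approach behind Hensley's theorem: the bijection between reduced fractions and digit strings with $\max a_i\leq K$, the telescoping identity $(\mathcal{L}_{\infty,1}-\mathcal{L}_{K,1})h(x)=1/((K+1+x)\log 2)$ for the Gauss density $h$, the first-order perturbation $1-\lambda_K(1)=1/(K\log 2)+O(K^{-2})$ against the Lebesgue left eigenvector, and division by $\partial_s\lambda_\infty(s)|_{s=1}=-\pi^2/(6\log 2)$ to obtain $2s(K)=2-\tfrac{12}{\pi^2K}+O(K^{-2})$, which does reproduce the factor $\exp(-12/(\alpha\pi^2))$ in the main term.

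Two genuine gaps remain, however. First, the entire quantitative content of the theorem---the error term $(1+\alpha^{-2})\exp(24/(\alpha\pi^2))\log\log C/\log C$, uniformly for $\alpha>4/\log\log C$---is delegated to an unspecified ``effective Tauberian theorem with a spectral-gap remainder'' applied to $\sum_n\langle\mathcal{L}_{K,s}^{\,n}\mathbf{1},\mathbf{1}\rangle$; you assert the residue $\rho_K\to 3/\pi^2$ at rate $O(1/K)$ and the precise shape of the remainder without deriving either, and that is exactly where the difficulty of the theorem lives. Second, the mechanism you offer for the $\exp(24/(\alpha\pi^2))$ amplification rests on the claim that the spectral gap of $\mathcal{L}_{K,1}$ shrinks like $1/K$. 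That premise is false: $\mathcal{L}_{K,1}$ converges to the full Gauss--Kuzmin--Wirsing operator in norm at rate $O(1/K)$, so its gap between the leading eigenvalue and the rest of the spectrum converges to the strictly positive gap of the full operator; what shrinks like $1/K$ is the distance $1-\lambda_K(1)$ of the leading eigenvalue from $1$, which is a different quantity and enters the analysis differently. Since the uniformity analysis---which you yourself identify as the delicate step---is built on this misidentification, the argument as written does not establish the stated error term.
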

    \noindent Finally, \cite{cy24} bounded the frequency of large values of $M(a/c')$ as a corollary to Lemma \ref{hensleyDensity}.
    \begin{corollary}[\cite{cy24}, Corollary $2.2$]\label{georgiaDensity}
        Define
        $$\widehat{\Phi}(\alpha,C)=\abs{\{(r,s):1<r<s\leq C,\,\gcd(r,s)=1,\,M(r/s)>\alpha\log C\}}.$$
        We have that
        $$\widehat{\Phi}(\alpha,C)\ll\frac{C^2}{\alpha}+C^2\,\frac{\log\log C}{\log C}$$
        uniformly in $\alpha>1$ as $C\to\infty$.
    \end{corollary}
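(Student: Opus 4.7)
The plan is to derive the bound as a direct consequence of Hensley's asymptotic formula in Lemma \ref{hensleyDensity} via a simple complementation argument. Let
$$T(C)=\abs{\{(r,s):1<r<s\leq C,\,\gcd(r,s)=1\}}.$$
The standard Mertens-type density estimate for coprime pairs gives $T(C)=\frac{3}{\pi^2}C^2+O(C\log C)$. Since every coprime pair counted by $T(C)$ either satisfies $M(r/s)\leq\alpha\log C$ or $M(r/s)>\alpha\log C$, we have the exact identity $\widehat{\Phi}(\alpha,C)=T(C)-\Phi(\alpha,C)$, and it remains to plug in Hensley's formula and estimate the result.

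Writing $h(\alpha)=\exp(-12/(\alpha\pi^2))$, Lemma \ref{hensleyDensity} gives
$$\Phi(\alpha,C)=\frac{3}{\pi^2}C^2 h(\alpha)+E(\alpha,C),\qquad \abs{E(\alpha,C)}\ll C^2 h(\alpha)\Bigl(1+\frac{1}{\alpha^2}\Bigr)\exp\Bigl(\frac{24}{\alpha\pi^2}\Bigr)\frac{\log\log C}{\log C}.$$
For $\alpha>1$ the combination $h(\alpha)\exp(24/(\alpha\pi^2))=\exp(12/(\alpha\pi^2))$ is bounded above by $\exp(12/\pi^2)$, and $1+1/\alpha^2<2$; hence $E(\alpha,C)\ll C^2\log\log C/\log C$ uniformly on $\alpha>1$. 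Substituting into the identity for $\widehat{\Phi}(\alpha,C)$ yields
$$\widehat{\Phi}(\alpha,C)=\frac{3}{\pi^2}C^2\bigl(1-h(\alpha)\bigr)+O\Bigl(C^2\,\frac{\log\log C}{\log C}\Bigr)+O(C\log C).$$

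Finally, applying the elementary inequality $1-e^{-x}\leq x$ with $x=12/(\alpha\pi^2)\geq 0$ bounds the main term by $\frac{36}{\alpha\pi^4}C^2\ll C^2/\alpha$, and the $O(C\log C)$ error arising from $T(C)$ is absorbed into the $O(C^2\log\log C/\log C)$ error. Combining the three contributions gives the claimed bound. The entire argument is essentially mechanical; the only point requiring a moment of care is confirming that the growing factor $\exp(24/(\alpha\pi^2))$ in Hensley's error is cancelled by the prefactor $h(\alpha)=\exp(-12/(\alpha\pi^2))$, which is precisely what makes the error estimate uniform on the range $\alpha>1$.
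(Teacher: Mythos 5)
Your proposal is correct, and it is essentially the intended derivation: the paper does not prove this statement itself but quotes it from \cite{cy24} as a corollary of Hensley's theorem (Lemma \ref{hensleyDensity}), and your complementation argument --- exact identity $\widehat{\Phi}=T-\Phi$, the cancellation $\exp(-12/(\alpha\pi^2))\exp(24/(\alpha\pi^2))=\exp(12/(\alpha\pi^2))\ll 1$ for $\alpha>1$ making the error uniform, and $1-e^{-x}\leq x$ giving the $C^2/\alpha$ main term --- is exactly how that corollary follows. The only point left implicit, and it is immediate, is that since $4/\log\log C\to 0$ as $C\to\infty$, the range $\alpha>1$ eventually lies inside the range of uniformity $\alpha>4/\log\log C$ of Lemma \ref{hensleyDensity}.
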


    \subsection{Proof of Theorem \ref{nontrivialbound}}\label{nontrivialboundproof}
    Before we begin the proof of Theorem \ref{nontrivialbound} we need to introduce some preliminary results. We begin by bounding the inner-most sums of Theorem \ref{cobformula}. Then, a generalized result of Korobov will then allow us to bound the inner-most double sums of Theorem \ref{cobformula}.  Note that the method used for the proofs in this section roughly follow \cite{cy24}, though we of course have to deviate in some respects due to the higher weight setting.
    \begin{lemma}\label{goofyUpperBound}
        For $K\geq 0$ and $z\in\mathcal{H}$ with $\mathfrak{Im}(z)\leq 1$ we have the upper bound
        $$\abs{\sum_{1\leq B}\overline{\chi_2}(B)B^K\,e(ABz)}\ll_{K,\,A,\,q_2} \frac{\abs{e(Az)}}{\norm{\mathfrak{Re}(Aq_2z)}}\left(\frac{1}{\mathfrak{Im}(z)}\right)^K.$$
    \end{lemma}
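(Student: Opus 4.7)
The plan is to exploit the periodicity of $\overline{\chi_2}$ modulo $q_2$ by splitting the sum into arithmetic progressions and then appealing to elementary geometric-series identities. Since $\overline{\chi_2}(B)$ vanishes unless $(B,q_2)=1$, I would write $B=mq_2+r$ with $1\leq r\leq q_2$, $(r,q_2)=1$, and $m\geq 0$. Setting $q:=e(Aq_2z)$ so that $|q|=e^{-2\pi Aq_2\,\mathfrak{Im}(z)}<1$, this decomposition gives
$$\sum_{1\leq B}\overline{\chi_2}(B)B^K\,e(ABz)=\sum_{\substack{1\leq r\leq q_2\\(r,q_2)=1}}\overline{\chi_2}(r)\,e(Arz)\sum_{m\geq 0}(mq_2+r)^K q^m.$$

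Next I would evaluate the inner $m$-sum in closed form. Expanding $(mq_2+r)^K$ by the binomial theorem and using $\sum_{m\geq 0}m^j q^m=P_j(q)/(1-q)^{j+1}$ for $j\geq 1$, with $P_j$ an Eulerian-type polynomial bounded on $|q|\leq 1$, I obtain
$$\sum_{m\geq 0}(mq_2+r)^K q^m=\frac{Q_{K,r,q_2}(q)}{(1-q)^{K+1}},\qquad |Q_{K,r,q_2}(q)|\ll_{K,q_2}1\text{ on }|q|\leq 1.$$
The heart of the argument is then a two-sided lower bound on $|1-q|$. Writing $q=e^{-\beta}e^{i\theta}$ with $\beta=2\pi Aq_2\,\mathfrak{Im}(z)$ and $\theta=2\pi\,\mathfrak{Re}(Aq_2z)$, the identity
$$|1-q|^2=(1-e^{-\beta})^2+4e^{-\beta}\sin^2(\pi\,\mathfrak{Re}(Aq_2z))$$
combined with the hypothesis $\mathfrak{Im}(z)\leq 1$ yields two independent estimates: $|1-q|\geq 1-e^{-\beta}\gg_{A,q_2}\mathfrak{Im}(z)$ from the real-part contribution (using $1-e^{-\beta}\geq\beta e^{-\beta}$), and $|1-q|\geq 2e^{-\beta/2}|\sin(\pi\,\mathfrak{Re}(Aq_2z))|\gg_{A,q_2}\norm{\mathfrak{Re}(Aq_2z)}$ from the imaginary-part contribution (using $|\sin(\pi s)|\geq 2\norm{s}$).

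To finish, I would combine these as $|1-q|^{K+1}=|1-q|\cdot|1-q|^K\gg_{K,A,q_2}\norm{\mathfrak{Re}(Aq_2z)}\cdot\mathfrak{Im}(z)^K$, and note that $|e(Arz)|=e^{-2\pi Ar\,\mathfrak{Im}(z)}\leq|e(Az)|$ for all $r\geq 1$, so that the outer sum over $r$ contributes at most $q_2\,|e(Az)|$. This yields
$$\abs{\sum_{1\leq B}\overline{\chi_2}(B)B^K\,e(ABz)}\ll_{K,A,q_2}\frac{|e(Az)|}{\norm{\mathfrak{Re}(Aq_2z)}\cdot\mathfrak{Im}(z)^K}$$
as claimed. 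The main delicate point is the two-sided lower bound on $|1-q|$: exactly one power must absorb the distance-to-integer factor while the remaining $K$ powers absorb $\mathfrak{Im}(z)^K$, and this is possible precisely because the real and imaginary parts of $1-q$ contribute independently to its magnitude, via the orthogonal splitting $|1-q|^2=(1-e^{-\beta})^2+4e^{-\beta}\sin^2(\theta/2)$.
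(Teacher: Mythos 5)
Your proposal is correct and follows essentially the same route as the paper's proof: split $B$ modulo $q_2$, binomial-expand, reduce the inner sum to derivatives of a geometric series in $q=e(Aq_2z)$, and then extract one factor of $\norm{\mathfrak{Re}(Aq_2z)}^{-1}$ together with $K$ factors of $\mathfrak{Im}(z)^{-1}$. The only difference is in the bookkeeping at the end: the paper bounds the factors $1/(1-e(Aq_2z))$ and $e(Aq_2z)/(1-e(Aq_2z))$ separately (the latter via $e^{-y}/(1-e^{-y})\leq 1/y$), whereas you clear everything over $(1-q)^{K+1}$ and lower-bound $\abs{1-q}$ in two independent ways using the splitting $\abs{1-q}^2=(1-e^{-\beta})^2+4e^{-\beta}\sin^2(\theta/2)$ --- both yield the same estimates under the hypothesis $\mathfrak{Im}(z)\leq 1$.
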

    \begin{proof}
        We first break up our sum modulo $q_2$ where $B=j+mq_2$ with $1\leq j\leq q_2$ and $0\leq m$.
        $$\sum_{1\leq B}\overline{\chi_2}(B)B^K\,e(ABz)=\sum_{j=1}^{q_2}\overline{\chi_2}(j)\,e(Ajz)\sum_{0\leq m}(j+mq_2)^K\,e(Aq_2z)^m.$$
        Using binomial expansion we have that
        $$\sum_{1\leq B}\overline{\chi_2}(B)B^K\,e(ABz)=\sum_{j=1}^{q_2}\overline{\chi_2}(j)\,e(Ajz)\sum_{M=0}^K\binom{K}{M}j^{K-M}q_2^M\sum_{0\leq m}m^M\,e(Aq_2z)^m.$$
        Checking that $\abs{e(Aq_2z)}<1$ (since $z\in\mathcal{H}$) for convergence, the inner-most sum on the right hand side is a linear combination of the derivatives of the geometric sum $\sum_{0\leq m}x^m$ with respect to $x$ where $x=e(Aq_2z)$. So we have
        \begin{equation}\label{awfulFiniteSum}
            \sum_{1\leq B}\overline{\chi_2}(B)B^K\,e(ABz)=\sum_{j=1}^{q_2}\overline{\chi_2}(j)\,e(Ajz)\sum_{M=0}^K\binom{K}{M}j^{K-M}q_2^M\left(\frac{1}{1-e(Aq_2z)}\right)\sum_{N=0}^Ma_{M,\,N}\left(\frac{e(Aq_2z)}{1-e(Aq_2z)}\right)^N
        \end{equation}
        where $a_{M,\,N}$ are the appropriate coefficients. Now since $z\in\mathcal{H}$ we have the inequality
        \begin{equation}\label{inequality1}
            \frac{1}{\abs{1-e(Aq_2z)}}\leq\frac{1}{2\,\norm{\mathfrak{Re}(Aq_2z)}}.
        \end{equation}
        Additionally since $z\in\mathcal{H}$ we also have that
        $$\abs{\frac{e(Aq_2z)}{1-e(Aq_2z)}}\leq\frac{\exp(-2\pi Aq_2\cdot\mathfrak{Im}(z))}{1-\exp(-2\pi Aq_2\cdot\mathfrak{Im}(z))}.$$
        By a calculus exercise we can show that $e^{-y}/(1-e^{-y})\leq 1/y$. So we have $\abs{e(Aq_2z)/(1-e(Aq_2z)}\ll_{A,\,q_2}1/\mathfrak{Im}(z)$.
        % $$\abs{\frac{e(Aq_2z)}{1-e(Aq_2z)}}\leq\frac{1}{2\pi Aq_2\cdot\mathfrak{Im}(z)}\ll_{A,\,q_2}\frac{1}{\mathfrak{Im}(z)}.$$
        Now because $\mathfrak{Im}(z)\leq 1$, we have that
        \begin{equation}\label{inequality2}
            \abs{\frac{e(Aq_2z)}{1-e(Aq_2z)}}^N\ll_{A,\,q_2}\left(\frac{1}{\mathfrak{Im}(z)}\right)^N\leq\left(\frac{1}{\mathfrak{Im}(z)}\right)^K.
        \end{equation}
        Applying inequalities (\ref{inequality1}) and (\ref{inequality2}) to (\ref{awfulFiniteSum}) and noting that $\abs{e(Ajz)}\leq\abs{e(Az)}$ (since $z\in\mathcal{H}$), by the triangle inequality the desired upper bound immediately follows.
        % $$\abs{\sum_{1\leq B}\overline{\chi_2}(B)B^K\,e(ABz)}\ll_{A,\,q_2}\frac{\abs{e(Az)}}{\norm{\mathfrak{Re}(Aq_2z)}}\left(\frac{1}{\mathfrak{Im}(z)}\right)^K\sum_{j=1}^{q_2}\sum_{M=0}^K\binom{K}{M}j^{K-M}q_2^M\sum_{N=0}^M\abs{a_{M,\,N}}$$
        % The triple sum on the right is some constant, so
        % The inner-most double summation is exactly $\mathcal{C}(K,j)$ where $a_{M,N}=b_{M,N}$. So by Lemma \ref{randomConstantNonDecreasing} we have that
        % \begin{align*}
        %     \abs{\sum_{1\leq B}\overline{\chi_2}(B)B^K\,e(ABz)}&\ll\frac{\abs{e(Az)}}{\norm{\mathfrak{Re}(Aq_2z)}}\left(\frac{1}{\mathfrak{Im}(z)}\right)^K\sum_{j=1}^{q_2}\mathcal{C}(K,j)\ll\frac{\mathcal{C}(K,q_2)\,\abs{e(Az)}}{\norm{\mathfrak{Re}(Aq_2z)}}\left(\frac{1}{\mathfrak{Im}(z)}\right)^K.
        % \end{align*}
        % Which is exactly the desired upper-bound.
    \end{proof}
    We now have the necessary machinery to bound the infinite double sums featured in Theorem \ref{cobformula}.
    \begin{lemma}\label{cobDoubleSumBound}
        Let $\gamma=\begin{psmallmatrix}
            a & b \\ c & d
        \end{psmallmatrix}\in\Gamma_0(q_1q_2)$ with $z_1=(i-d)/c$ and $\gamma z_1=(i+a)/c$ with $c\geq 1$. We have that
        $$\abs{\sum_{1\leq A}\frac{\chi_1(A)}{A^{n+1}}\sum_{1\leq B}\overline{\chi_2}(B)B^{k-n-2}\,e(ABz_1)}\ll M(d/c')\,c^{k-n-2}\log^2c'$$
        and likewise
        $$\abs{\sum_{1\leq A}\frac{\chi_1(A)}{A^{n+1}}\sum_{1\leq B}\overline{\chi_2}(B)B^{k-n-2}\,e(AB\gamma z_1)}\ll M(a/c')\,c^{k-n-2}\log^2c'$$
        where $c'=c/q_2$.
    \end{lemma}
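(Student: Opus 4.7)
The plan is to reduce the double sum to a Korobov-type sum by bounding the inner $B$-sum via Lemma \ref{goofyUpperBound}. For the first claim, take $z = z_1 = (i-d)/c$, so that $\mathfrak{Im}(z_1) = 1/c \leq 1$ (using $c \geq 1$), $|e(Az_1)| = e^{-2\pi A/c}$, and $\mathfrak{Re}(Aq_2 z_1) = -Ad/c'$. A crucial preliminary observation is that $q_1 q_2 \mid c$ forces $q_1 \mid c'$, so whenever $c' \mid A$ we have $\chi_1(A) = 0$; this exactly rules out the problematic case where $\norm{Ad/c'}$ vanishes (using $\gcd(d,c')=1$). Thus after the triangle inequality we may restrict to $A$ with $c' \nmid A$, for which Lemma \ref{goofyUpperBound} applies with $K = k-n-2$ to yield
$$\abs{\sum_{1\leq B}\overline{\chi_2}(B)B^{k-n-2}\,e(ABz_1)} \ll_{k,q_2} \frac{e^{-2\pi A/c}}{\norm{Ad/c'}}\,c^{k-n-2}.$$

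Substituting back, the sum in question is bounded by $c^{k-n-2}\sum_{1 \leq A,\, c' \nmid A} e^{-2\pi A/c}/(A^{k-1}\norm{Ad/c'})$, which I split at $A = c'$. The $A < c'$ range uses $e^{-2\pi A/c} \leq 1$ together with Corollary \ref{genKorobovBounds} applied with $K = k-2$ to obtain the main term $\ll c^{k-n-2}M(d/c')\log^2 c'$. For the tail $A \geq c'$ with $c' \nmid A$, writing $A = mc' + j$ with $m \geq 1$ and $1 \leq j \leq c'-1$ gives $A^{k-1} \geq (mc')^{k-1}$, $e^{-2\pi A/c} \leq e^{-2\pi m/q_2}$, and $\norm{Ad/c'} = \norm{jd/c'}$. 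The sum $\sum_m e^{-2\pi m/q_2}/m^{k-1}$ is $\ll_{k,q_2} 1$, while $\sum_{j=1}^{c'-1}\norm{jd/c'}^{-1} \leq 2c'\log c'$ by the first inequality of Lemma \ref{korobovBounds}. Combining, the tail contributes $\ll_{k,q_2} c^{k-n-2}\log c'/c'^{k-2}$, which is absorbed into the main term since $M(d/c') \geq 1$ and $k \geq 2$.

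The second claim follows from the identical argument after substituting $\gamma z_1 = (i+a)/c$ in place of $z_1$: one again has $\mathfrak{Im}(\gamma z_1) = 1/c$, while now $\mathfrak{Re}(Aq_2 \gamma z_1) = Aa/c'$ with $\gcd(a,c')=1$ (from $\det\gamma = 1$), so $M(a/c')$ replaces $M(d/c')$ throughout. The main technical point is the primitivity-and-divisibility observation that discards the $c' \mid A$ terms so that Lemma \ref{goofyUpperBound} applies; once this is in place, the rest is routine bookkeeping of the two pieces of the split against the Korobov-type bounds.
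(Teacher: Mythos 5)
Your proposal is correct and follows essentially the same route as the paper: Lemma \ref{goofyUpperBound} bounds the inner $B$-sum, the vanishing of $\chi_1$ on multiples of $c'$ (since $q_1\mid c'$) discards exactly the terms where $\norm{Ad/c'}=0$, and the remaining $A$-sum is estimated modulo $c'$ via the Korobov-type bounds; the paper merely writes the whole sum at once as $A=j+mc'$ with $m\geq 0$ instead of splitting at $A=c'$ and treating the tail separately. One small slip: after inserting the $B$-sum bound the $A$-denominator should be $A^{n+1}$, not $A^{k-1}$ (with $A^{k-1}$ the displayed expression is not an upper bound), but this is immaterial since Corollary \ref{genKorobovBounds} holds for every $K\geq 0$ and your tail estimate goes through verbatim with $n+1$ in place of $k-1$.
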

    \begin{proof}
        Note that $\chi_1(kc')=0$ for all $k\in\mathbb{Z}$ because $q_1\mid c'$. Additionally we have that $\mathfrak{Im}(z_1)\leq 1$ since $c\geq 1$, so by Lemma \ref{goofyUpperBound} and the triangle inequality we have that
        \begin{equation}\label{coro54pickup}
            \abs{\sum_{1\leq A}\frac{\chi_1(A)}{A^{n+1}}\sum_{1\leq B}\overline{\chi_2}(B)B^{k-n-2}\,e(ABz_1)}\ll_{k,\,n,\,q_2}\frac{1}{(\mathfrak{Im}(z_1))^{k-n-2}}\sum_{\substack{1\leq A \\ c'\nmid A}}\frac{\abs{e(Az_1)}}{A^{n+1}\,\norm{-Ad/c'}}.
        \end{equation}
        Focusing on the sum and breaking it up modulo $c'$, and noting that $\abs{e(jz_1)}\leq 1$ (since $z_1\in\mathcal{H}$) we have that
        $$\sum_{\substack{1\leq A \\ c\nmid A}}\frac{\abs{e(Az_1)}}{A^{n+1}\,\norm{-Ad/c'}}=\sum_{0\leq m}\sum_{j=1}^{c'-1}\frac{\abs{e(jz_1)\,e(mc'z_1)}}{(j+mc')^{n+1}\,\norm{-jd/c'+md}}\leq\sum_{0\leq m}\exp(-2\pi m/q_2)\sum_{j=1}^{c'-1}\frac{1}{j^{n+1}\norm{jd/c'}}.$$
        But now applying Corollary \ref{genKorobovBounds} and noting that $\sum_{0\leq m}\exp(-2\pi m/q_2)$ converges, we have that
        $$\sum_{\substack{1\leq A \\ c\nmid A}}\frac{\abs{e(Az_1)}}{A^{n+1}\norm{-Ad/c'}}\ll_{n,q_2} M(d/c')\,\log^2c'.$$
        Substituting this back into (\ref{coro54pickup}) and combining it with the fact that $\mathfrak{Im}(z_1)=c^{-1}$ completes the proof.

        A similar process follows for the specialization at $\gamma z_1$.
    \end{proof}
    
    Now we prove Theorem \ref{nontrivialbound} using all the previous results in this section.
    \begin{proof}  
        Let $z_1=(i-d)/c$ and $\gamma z_1=(i+a)/c$ with $c'=c/q_2$. Using Theorem \ref{cobformula} and Lemma \ref{cobDoubleSumBound} we have that
        $$\abs{\phi_{\chi_1,\chi_2,k}(\gamma,1,-a/c)}\ll_{k,\,q_2}(M(d/c')+M(a/c'))\log^2c'.$$
        % \begin{align*}
        %     \abs{\phi_{\chi_1,\chi_2,k}(\gamma,1,-a/c)}&\leq\frac{q_2\,(9M(d/c')+q_2/(2\pi))\,\mathcal{C}(k-2,q_2)}{\pi^{k-1}}\cdot\log^2c' \\
        %     &\qquad+2\sum_{n=0}^{k-2}\frac{(k-2)!}{(2\pi)^{n+1}(k-n-2)!}\cdot\frac{q_2\,(9M(a/c')+q_2/(2\pi))\,\mathcal{C}(k-n-2,q_2)}{\pi^{k-n-2}}\cdot\log^2c'.
        % \end{align*}
        % Using Lemma \ref{randomConstantNonDecreasing} and some loose bounds we have
        % \begin{align*}
        %     \abs{\phi_{\chi_1,\chi_2,k}(\gamma,1,-a/c)}&\leq\frac{q_2\,(9M(d/c')+q_2/(2\pi))\,\mathcal{C}(k-2,q_2)}{\pi^{k-1}}\cdot\log^2c' \\
        %     &\qquad+\frac{q_2\,(9M(a/c')+q_2/(2\pi))\,\mathcal{C}(k-2,q_2)}{\pi^{k-1}}\cdot(k-2)!\cdot\log^2c'.
        % \end{align*}
        Recalling that $S_{\chi_1,\chi_2,k}(\gamma)=(-1)^k\tau(\overline{\chi_1})(k-1)\,\phi_{\chi_1,\chi_2,k}(\gamma,1,-a/c)$ and that $\abs{\tau(\overline{\chi_1})}=\sqrt{q_1}$ gives us that
        $$\abs{S_{\chi_1,\chi_2,k}(\gamma)}\ll_{k,\,q_1,\,q_2}(M(d/c')+M(a/c'))\log^2c'$$
        also. Applying Lemma \ref{partialQuotientDiff} gives us (\ref{nonTrivialBoundEqu}). Finally, by applying Lemma \ref{hensleyDensity} and Corollary \ref{georgiaDensity} we complete the proof.
    \end{proof}

% We choose to address the quantum modularity first, as the arithmetic properties remain conjecture. Furthermore, the ideas developed from quantum modularity help us to develop Theorem \ref{containmentTheorem} which partially addresses the contents of Conjecture \ref{conj611}.

\section{Quantum Modularity of $\widehat{S}_{\chi_1,\chi_2,k}$}\label{qmfSec}
\subsection{Proof of Theorem \ref{qmfThrm}}
We develop a preliminary result from which the quantum modularity of $\widehat{S}_{\chi_1,\chi_2,k}$ is immediately realized.
\begin{lemma}\label{qmfLemma}
    The function
    $$h_{\gamma,\chi_1,\chi_2,k}:(\Gamma_0(q_1q_2))(\infty)\setminus\{\infty\}\to\mathbb{C},\quad \mathfrak{a}\mapsto\widehat{S}_{\chi_1,\chi_2,k}(\mathfrak{a})-\widehat{S}_{\chi_1,\chi_2,k}|_{2-k}\gamma(\mathfrak{a})$$
    has the equivalent expression
    \begin{equation}\label{equqmfLemma}h_{\gamma,\chi_1,\chi_2,k}(\mathfrak{a})=(-1)^k\tau(\overline{\chi_1})(k-1)\,\psi(\gamma)\,\phi_{\chi_1,\chi_2,k}(\gamma^{-1},1,-\mathfrak{a})+(1-\psi(\gamma))\,\widehat{S}_{\chi_1,\chi_2,k}(\mathfrak{a}).\end{equation}
    When $\psi(\gamma)=1$ we have that $h_{\gamma,\chi_1,\chi_2,k}$ is a degree $k-2$ polynomial (in $\mathfrak{a}\in(\Gamma_0(q_1q_2))(\infty)\setminus\{\infty\}\subseteq\mathbb{Q}$).
\end{lemma}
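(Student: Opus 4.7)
The plan is to unwind both terms in the definition $h_{\gamma,\chi_1,\chi_2,k}(\mathfrak{a})=\widehat{S}_{\chi_1,\chi_2,k}(\mathfrak{a})-\widehat{S}_{\chi_1,\chi_2,k}|_{2-k}\gamma(\mathfrak{a})$ until they are expressed as period integrals with a common endpoint $\mathfrak{a}$, and then read off the claimed decomposition. First I would expand the slashed term via Definition \ref{slashDef} and Definition \ref{shatDef} as
\[
\widehat{S}_{\chi_1,\chi_2,k}|_{2-k}\gamma(\mathfrak{a})=j(\gamma,\mathfrak{a})^{k-2}(-1)^k\tau(\overline{\chi_1})(k-1)\int_\infty^{\gamma\mathfrak{a}}E_{\chi_1,\chi_2,k}(z)P_{k-2}(z;1,-\gamma\mathfrak{a})\,dz.
\]
Then I would apply Lemma \ref{integralCOV2} with $\mathfrak{b}=\infty$ to convert the integral into one from $\gamma^{-1}\infty$ to $\mathfrak{a}$, producing a factor $j(\gamma,\mathfrak{a})^{2-k}$ and the integrand $E_{\chi_1,\chi_2,k}|_k\gamma(z)P_{k-2}(z;1,-\mathfrak{a})$. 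Using the modular transformation noted after Definition \ref{youngEisensteinDef}, namely $E_{\chi_1,\chi_2,k}|_k\gamma=\psi(\gamma)E_{\chi_1,\chi_2,k}$, and cancelling $j(\gamma,\mathfrak{a})^{k-2}j(\gamma,\mathfrak{a})^{2-k}=1$ gives the clean expression
\[
\widehat{S}_{\chi_1,\chi_2,k}|_{2-k}\gamma(\mathfrak{a})=(-1)^k\tau(\overline{\chi_1})(k-1)\,\psi(\gamma)\int_{\gamma^{-1}\infty}^{\mathfrak{a}}E_{\chi_1,\chi_2,k}(z)P_{k-2}(z;1,-\mathfrak{a})\,dz.
\]

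Next I would split this integral as $\int_{\gamma^{-1}\infty}^{\mathfrak{a}}=-\int_\infty^{\gamma^{-1}\infty}+\int_\infty^{\mathfrak{a}}$. By Definition \ref{eichlerIntegral} the first piece is exactly $-\phi_{\chi_1,\chi_2,k}(\gamma^{-1},1,-\mathfrak{a})$, and by Definition \ref{shatDef} the second piece equals $\widehat{S}_{\chi_1,\chi_2,k}(\mathfrak{a})/[(-1)^k\tau(\overline{\chi_1})(k-1)]$. Substituting back and subtracting from $\widehat{S}_{\chi_1,\chi_2,k}(\mathfrak{a})$ yields exactly (\ref{equqmfLemma}); this is a short bookkeeping of the constants once the integrals are aligned.

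For the polynomial claim in the case $\psi(\gamma)=1$, the second term drops out and $h_{\gamma,\chi_1,\chi_2,k}(\mathfrak{a})$ reduces to a scalar multiple of $\phi_{\chi_1,\chi_2,k}(\gamma^{-1},1,-\mathfrak{a})=\int_\infty^{\gamma^{-1}\infty}E_{\chi_1,\chi_2,k}(z)(z-\mathfrak{a})^{k-2}\,dz$. Applying the binomial theorem to $(z-\mathfrak{a})^{k-2}$ and pulling the finite sum outside the integral (justified by exponential decay of $E_{\chi_1,\chi_2,k}$ at both endpoints, as already noted after Definition \ref{eichlerIntegral}) expresses $h_{\gamma,\chi_1,\chi_2,k}(\mathfrak{a})$ as $\sum_{n=0}^{k-2}c_n\,\mathfrak{a}^n$ whose coefficients are explicit constants times $\int_\infty^{\gamma^{-1}\infty}E_{\chi_1,\chi_2,k}(z)\,z^{k-2-n}\,dz$, proving it is a polynomial in $\mathfrak{a}$ of degree at most $k-2$.

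The only genuinely delicate step is the cancellation of automorphy factors in the slashed integral; one has to be careful to apply Lemma \ref{integralCOV2} with the correct base point and to track that the polynomial argument $-\gamma\mathfrak{a}$ gets traded for $-\mathfrak{a}$ precisely because that lemma transports $P_{k-2}(z;1,-\gamma\mathfrak{a})$ to $P_{k-2}(z;1,-\mathfrak{a})$ up to the factor $j(\gamma,\mathfrak{a})^{2-k}$. After that, everything follows by rearrangement and the modularity of $E_{\chi_1,\chi_2,k}$.
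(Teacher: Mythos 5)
Your proposal is correct and follows essentially the same route as the paper's proof: expand $\widehat{S}_{\chi_1,\chi_2,k}|_{2-k}\gamma(\mathfrak{a})$ as a period integral, apply Lemma \ref{integralCOV2} together with $E_{\chi_1,\chi_2,k}|_k\gamma=\psi(\gamma)\,E_{\chi_1,\chi_2,k}$ so the automorphy factors cancel, then split the path at $\infty$ to identify $\phi_{\chi_1,\chi_2,k}(\gamma^{-1},1,-\mathfrak{a})$ and $\widehat{S}_{\chi_1,\chi_2,k}(\mathfrak{a})$. Your binomial-expansion justification of the polynomial claim (degree at most $k-2$) is a correct elaboration of what the paper leaves implicit.
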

\begin{proof}
    Note that
    $$\frac{h_{\gamma,\chi_1,\chi_2,k}(\mathfrak{a})}{(-1)^k\tau(\overline{\chi_1})(k-1)}=\int_\infty^\mathfrak{a}E_{\chi_1,\chi_2,k}(z)P_{k-2}(z;1,-\mathfrak{a})\,dz-j(\gamma,\mathfrak{a})^{k-2}\int_\infty^{\gamma\mathfrak{a}}E_{\chi_1,\chi_2,k}(z)\,P_{k-2}(z;1,-\gamma\mathfrak{a})\,dz.$$
    Applying Lemma \ref{integralCOV2} we have
    $$\frac{h_{\gamma,\chi_1,\chi_2,k}(\mathfrak{a})}{(-1)^k\tau(\overline{\chi_1})(k-1)}=\int_\infty^\mathfrak{a}E_{\chi_1,\chi_2,k}(z)P_{k-2}(z;1,-\mathfrak{a})\,dz-\int_{\gamma^{-1}\infty}^\mathfrak{a}E_{\chi_1,\chi_2,k}|_k\gamma(z)\,P_{k-2}(z;1,-\mathfrak{a})\,dz.$$
    Recalling that $E_{\chi_1,\chi_2,k}|_k\gamma(z)=\psi(\gamma)\,E_{\chi_1,\chi_2,k}(z)$ and substituting this above we get
    \begin{align*}
        \frac{h_{\gamma,\chi_1,\chi_2,k}(\mathfrak{a})}{(-1)^k\tau(\overline{\chi_1})(k-1)}&=\int_\infty^\mathfrak{a}E_{\chi_1,\chi_2,k}(z)P_{k-2}(z;1,-\mathfrak{a})\,dz+\psi(\gamma)\int_\mathfrak{a}^{\gamma^{-1}\infty}E_{\chi_1,\chi_2,k}(z)P_{k-2}(z;1,-\mathfrak{a})\,dz \\
        &=\psi(\gamma)\int_{\infty}^{\gamma^{-1}\infty}E_{\chi_1,\chi_2,k}(z)P_{k-2}(z;1,-\mathfrak{a})\,dz+(1-\psi(\gamma))\int_\infty^\mathfrak{a}E_{\chi_1,\chi_2,k}(z)P_{k-2}(z;1,-\mathfrak{a})\,dz.
    \end{align*}
    Substituting definitions and rescaling we have that
    $$h_{\gamma,\chi_1,\chi_2,k}(\mathfrak{a})=(-1)^k\tau(\overline{\chi_1})(k-1)\,\psi(\gamma)\,\phi_{\chi_1,\chi_2,k}(\gamma^{-1},1,-\mathfrak{a})+(1-\psi(\gamma))\,\widehat{S}_{\chi_1,\chi_2,k}(\mathfrak{a})$$
    as desired. Furthermore, note that when $\psi(\gamma)=1$ we have that $h_{\gamma,\chi_1,\chi_2,k}$ is a degree $k-2$ polynomial as the $(1-\psi(\gamma))\,\widehat{S}_{\chi_1,\chi_2,k}(\mathfrak{a})$ term vanishes.
\end{proof}
Now we prove Theorem \ref{qmfThrm} as follows.
\begin{proof}
    For all $\gamma\in\Gamma_1(q_1q_2)$, we have that $\psi(\gamma)=1$. So per Lemma \ref{qmfLemma}, $h_{\gamma,\chi_1,\chi_2,k}$ is a degree $k-2$ polynomial (in $\mathfrak{a}$) and is thus continuous with respect to the real topology. Restricting to the domain $(\Gamma_1(q_1q_2))(\infty)\setminus\{\infty\}\subseteq(\Gamma_0(q_1q_2))(\infty)\setminus\{\infty\}\subseteq\mathbb{Q}$, the set of cusps which are $\Gamma_1(q_1q_2)$-equivalent to $\infty$ completes the proof.
\end{proof}
% \begin{corollary}
%     It follows that $\widehat{S}_{\chi,\chi,k}$ is a weight $2-k$ quantum modular form on $\Gamma_0(q_1q_2)$.
% \end{corollary}
% \begin{proof}
%     For all $\gamma\in\Gamma_0(q_1q_2)$, we have that $\psi(\gamma)=\chi(\gamma)\,\overline{\chi}(\gamma)=\abs{\chi(\gamma)}^2=1$. So per Theorem $\ref{qmfLemma}$, $h_{\gamma,\chi,\chi,k}$ is a degree $k-2$ polynomial and is thus continuous with respect to the real topology which completes the proof.
% \end{proof}

Similar in spirit to the contents of Section \ref{cohomology}, Zagier notes that for a weight $k$ quantum modular form $f$, it follows that the corresponding $h_\gamma$ should satisfy a crossed homomorphism relation by construction (\cite{Zagier}, Example 2). Indeed, following the standard method of proof, we can verify this holds for $h_{\gamma,\chi_1,\chi_2,k}$.
\begin{lemma}\label{qmfCrossHom}
    For $\gamma_1,\gamma_2\in\Gamma_0(q_1q_2)$ we have the crossed homomorphism relation
    $$h_{\gamma_1\gamma_2,\chi_1,\chi_2,k}=h_{\gamma_1,\chi_1,\chi_2,k}|_{2-k}\gamma_2+h_{\gamma_2,\chi_1,\chi_2,k}$$
\end{lemma}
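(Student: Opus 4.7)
The plan is to prove the identity by a direct algebraic manipulation using only the definition of $h_{\gamma,\chi_1,\chi_2,k}$ and the fact that the weight $2-k$ slash operator defines a right group action on functions $g:\mathcal{C}\to\mathbb{C}$. No new analytic input is required; this is really a cocycle-style identity automatic from the way $h_\gamma$ was built as a coboundary of $\widehat{S}_{\chi_1,\chi_2,k}$.

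First I would record the cocycle relation $j(\gamma_1\gamma_2,z)=j(\gamma_1,\gamma_2 z)\,j(\gamma_2,z)$, which gives
\[
g|_{2-k}(\gamma_1\gamma_2)=(g|_{2-k}\gamma_1)|_{2-k}\gamma_2
\]
for any $g$. Applying this with $g=\widehat{S}_{\chi_1,\chi_2,k}$, I would then compute directly from the definition
\[
h_{\gamma,\chi_1,\chi_2,k}=\widehat{S}_{\chi_1,\chi_2,k}-\widehat{S}_{\chi_1,\chi_2,k}|_{2-k}\gamma
\]
that
\begin{align*}
h_{\gamma_1,\chi_1,\chi_2,k}|_{2-k}\gamma_2+h_{\gamma_2,\chi_1,\chi_2,k}
&=\bigl(\widehat{S}_{\chi_1,\chi_2,k}-\widehat{S}_{\chi_1,\chi_2,k}|_{2-k}\gamma_1\bigr)|_{2-k}\gamma_2+\widehat{S}_{\chi_1,\chi_2,k}-\widehat{S}_{\chi_1,\chi_2,k}|_{2-k}\gamma_2\\
&=\widehat{S}_{\chi_1,\chi_2,k}|_{2-k}\gamma_2-\widehat{S}_{\chi_1,\chi_2,k}|_{2-k}(\gamma_1\gamma_2)+\widehat{S}_{\chi_1,\chi_2,k}-\widehat{S}_{\chi_1,\chi_2,k}|_{2-k}\gamma_2\\
&=\widehat{S}_{\chi_1,\chi_2,k}-\widehat{S}_{\chi_1,\chi_2,k}|_{2-k}(\gamma_1\gamma_2)\\
&=h_{\gamma_1\gamma_2,\chi_1,\chi_2,k},
\end{align*}
which is the claimed identity.

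The only subtlety worth a remark is that the slash operator in Definition \ref{slashDef} is defined on functions $g:\mathcal{C}\to\mathbb{C}$ with $\mathcal{C}$ closed under the $\Gamma$-action; since $(\Gamma_0(q_1q_2))(\infty)$ is exactly such a set of cusps and both $\widehat{S}_{\chi_1,\chi_2,k}$ and $h_{\gamma,\chi_1,\chi_2,k}$ are evaluated on it, all slash operations land back in the same domain, so the manipulation above is well-defined. I do not anticipate any real obstacle: the hardest (and essentially only) step is observing that the slash operator is a group action, which is the standard cocycle calculation above.
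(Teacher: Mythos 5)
Your proposal is correct and follows essentially the same argument as the paper: expand each $h_{\gamma}$ as the coboundary $\widehat{S}_{\chi_1,\chi_2,k}-\widehat{S}_{\chi_1,\chi_2,k}|_{2-k}\gamma$, use that the weight $2-k$ slash operator is a right action so $\widehat{S}_{\chi_1,\chi_2,k}|_{2-k}\gamma_1|_{2-k}\gamma_2=\widehat{S}_{\chi_1,\chi_2,k}|_{2-k}\gamma_1\gamma_2$, and let the telescoping cancellation finish the proof. Your added remark about the domain of cusps being closed under the $\Gamma$-action is a fine (if implicit in the paper) observation, but the proofs are otherwise identical.
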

\begin{proof}
    Note that
    $$h_{\gamma_1,\chi_1,\chi_2,k}|_{2-k}\gamma_2=\widehat{S}_{\chi_1,\chi_2,k}|_{2-k}\gamma_2-\widehat{S}_{\chi_1,\chi_2,k}|_{2-k}\gamma_1|_{2-k}\gamma_2=\widehat{S}_{\chi_1,\chi_2,k}|_{2-k}\gamma_2-\widehat{S}_{\chi_1,\chi_2,k}|_{2-k}\gamma_1\gamma_2.$$
    So it then follows that
    \begin{align*}
        h_{\gamma_1,\chi_1,\chi_2,k}|_{2-k}\gamma_2+h_{\gamma_2,\chi_1,\chi_2,k}&=\widehat{S}_{\chi_1,\chi_2,k}|_{2-k}\gamma_2-\widehat{S}_{\chi_1,\chi_2,k}|_{2-k}\gamma_1\gamma_2+\widehat{S}_{\chi_1,\chi_2,k}-\widehat{S}_{\chi_1,\chi_2,k}|_{2-k}\gamma_2 \\
        &=\widehat{S}_{\chi_1,\chi_2,k}-\widehat{S}_{\chi_1,\chi_2,k}|_{2-k}\gamma_1\gamma_2=h_{\gamma_1\gamma_2,\chi_1,\chi_2,k}
    \end{align*}
    as desired.
\end{proof}
\begin{remark}
    Note that this relation is not a restatement of Lemma \ref{crossHom} or Lemma \ref{crossHomSpecial}. In the case of Lemma \ref{crossHom}, note that Lemma \ref{qmfCrossHom} implicitly depends on $S_{\chi_1,\chi_2,k}$, which is $\phi_{\chi_1,\chi_2,k}$ with specialized $X$ and $Y$ whereas Lemma \ref{crossHom} does not specialize $X$ or $Y$. Similarly, in the case of Lemma \ref{crossHomSpecial}, this Lemma is specialized to $k=2$ where as Lemma \ref{qmfCrossHom} does not specialize $k$.
\end{remark}
This result will be used later in Section \ref{frickeSec} to help us derive our Fricke reciprocity relation Theorem \ref{reciprocity}.

\subsection{Visualizations with Discussion}\label{vizDiscussion}
We prove that $\phi_{\chi_1,\chi_2,k}$ is invariant under multiplication by matrices in $\Gamma_\infty$, as this will be used later in this section to clarify some of our visualizations.
\begin{lemma}\label{Speriodicity}
    For matrices $\gamma\in\Gamma_\infty$ and $\gamma'\in\Gamma_0(q_1q_2)$, we have that
    $$S_{\chi_1,\chi_2,k}(\gamma\gamma')=S_{\chi_1,\chi_2,k}(\gamma').$$
\end{lemma}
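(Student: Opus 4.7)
The plan is to reduce the claim to the crossed homomorphism relation of Lemma \ref{crossHom}, taking advantage of the fact that $\gamma \in \Gamma_\infty$ fixes the cusp $\infty$, so that $\phi_{\chi_1,\chi_2,k}(\gamma, X, Y)$ is an integral from $\infty$ to $\infty$ and therefore vanishes identically in $X$ and $Y$.

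I would first treat the generator $\gamma = \begin{psmallmatrix} 1 & n \\ 0 & 1 \end{psmallmatrix}$. Writing $\gamma' = \begin{psmallmatrix} a & b \\ c & d \end{psmallmatrix}$, one has $\gamma\gamma' = \begin{psmallmatrix} a+nc & b+nd \\ c & d \end{psmallmatrix}$, so the bottom-left entry is preserved while the top-left becomes $a + nc$. By Definition \ref{Sdef},
$$S_{\chi_1,\chi_2,k}(\gamma\gamma') = (-1)^k \tau(\overline{\chi_1})(k-1)\, \phi_{\chi_1,\chi_2,k}\!\left(\gamma\gamma', 1, -a/c - n\right).$$
Applying Lemma \ref{crossHom} with $\gamma_1 = \gamma$ and $\gamma_2 = \gamma'$, the contribution of $\phi_{\chi_1,\chi_2,k}(\gamma, \cdot, \cdot)$ vanishes, and $\psi(\gamma) = \chi_1\overline{\chi_2}(1) = 1$; moreover, the substitution $(X, Y) \mapsto (aX + cY, bX + dY)$ dictated by the cocycle specialises to $(X, Y) \mapsto (X, nX + Y)$. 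The key observation is that this map sends $(1, -a/c - n)$ to exactly $(1, -a/c)$, so the $Y$-shift that appears when reading off the new $a/c$-value of $\gamma\gamma'$ is precisely cancelled. Multiplying through by the scalar $(-1)^k \tau(\overline{\chi_1})(k-1)$ then gives $S_{\chi_1,\chi_2,k}(\gamma\gamma') = S_{\chi_1,\chi_2,k}(\gamma')$.

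If $\Gamma_\infty$ is taken to include $-I$, I would dispose of the extra factor by the same template: Lemma \ref{crossHom} again kills the $\phi_{\chi_1,\chi_2,k}(-I, \cdot, \cdot)$ term, and using $\psi(-I) = \chi_1\overline{\chi_2}(-1) = (-1)^k$ (immediate from $\chi_1\chi_2(-1) = (-1)^k$ together with $\chi_2(-1) \in \{\pm 1\}$) together with the homogeneity $P_{k-2}(z; -X, -Y) = (-1)^{k-2} P_{k-2}(z; X, Y)$, one finds that the total sign collapses to $(-1)^k(-1)^{k-2} = 1$, while the $(1, -a/c)$-argument is unchanged because both $a$ and $c$ flip sign simultaneously. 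There is no real technical obstacle; the only delicate point is the bookkeeping of the $Y$-argument under the cocycle substitution, and once that is in place the identity is essentially immediate.
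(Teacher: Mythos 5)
Your proof is correct, but it takes a different route from the paper's. The paper proves the lemma in a single step with the change-of-variables formula of Lemma \ref{integralCOV2}: taking $\mathfrak{b}=\infty$ and $\mathfrak{a}=\gamma'\infty$, and noting $\gamma^{-1}\infty=\infty$, $j(\gamma,\cdot)=1$, and $E_{\chi_1,\chi_2,k}|_k\gamma=E_{\chi_1,\chi_2,k}$ for $\gamma\in\Gamma_\infty$, it obtains $\phi_{\chi_1,\chi_2,k}(\gamma\gamma',1,-\gamma\gamma'\infty)=\phi_{\chi_1,\chi_2,k}(\gamma',1,-\gamma'\infty)$ directly, so the specialization of $Y$ at the new cusp never has to be tracked by hand. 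You instead route the argument through the crossed homomorphism of Lemma \ref{crossHom}, kill the $\phi_{\chi_1,\chi_2,k}(\gamma,X,Y)$ term because $\gamma\infty=\infty$ makes it an integral with coinciding endpoints, and then verify explicitly that the cocycle substitution $(X,Y)\mapsto(X,nX+Y)$ carries $(1,-a/c-n)$ back to $(1,-a/c)$; your separate treatment of $-I$, using $\psi(-I)=(-1)^k$ together with the homogeneity $P_{k-2}(z;-X,-Y)=(-1)^{k-2}P_{k-2}(z;X,Y)$, is also correct, and the remaining elements $-T^n$ of $\Gamma_\infty$ follow by composing the two cases (a one-line remark worth adding, since $\Gamma_\infty\subseteq\Gamma_0(q_1q_2)$ makes the composition legitimate). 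The two arguments rest on the same underlying change-of-variables principle---Lemma \ref{crossHom} is itself proved from Lemma \ref{integralCOV1}---so neither is more general; yours has the merit of exhibiting concretely how the $Y$-shift cancels under the cocycle action, while the paper's is shorter because Lemma \ref{integralCOV2} packages the polynomial's dependence on the endpoint automatically.
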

\begin{proof}
    Recalling Definition \ref{eichlerIntegral}, then applying Lemma \ref{integralCOV2} and noting that $\gamma^{-1}\infty=\infty$ and $E_{\chi_1,\chi_2,k}|_k\gamma(z)=E_{\chi_1,\chi_2,k}(z)$ since $\gamma\in\Gamma_\infty$, we have
    \begin{align*}
        \phi_{\chi_1,\chi_2,k}(\gamma\gamma',1,-\gamma\gamma'\infty)&=\int_\infty^{\gamma\gamma'\infty}E_{\chi_1,\chi_2,k}(z)\,P_{k-2}(z;1,-\gamma\gamma'\infty)\,dz \\
        &=\int_{\gamma^{-1}\infty}^{\gamma'\infty}E_{\chi_1,\chi_2,k}|_k\gamma(z)\,P_{k-2}(z;1,-\gamma'\infty)\,dz \\
        &=\int_\infty^{\gamma'\infty}E_{\chi_1,\chi_2,k}(z)\,P_{k-2}(z;1,-\gamma'\infty)\,dz=\phi_{\chi_1,\chi_2,k}(\gamma',1,-\gamma'\infty).
    \end{align*}
    Thus by Definition \ref{Sdef} we have that
    \begin{align*}
        S_{\chi_1,\chi_2,k}(\gamma\gamma')&=(-1)^k\tau(\overline{\chi_1})(k-1)\,\phi_{\chi_1,\chi_2,k}(\gamma\gamma',1,-\gamma\gamma'\infty) \\
        &=(-1)^k\tau(\overline{\chi_1})(k-1)\,\phi_{\chi_1,\chi_2,k}(\gamma',1,-\gamma'\infty)=S_{\chi_1,\chi_2,k}(\gamma').
    \end{align*}
\end{proof}
\begin{corollary}\label{ShatPeriodicity}
    For all $n\in\mathbb{Z}$ and $\mathfrak{a}\in(\Gamma_0(q_1q_2))(\infty)$ we have that
    $$\widehat{S}_{\chi_1,\chi_2,k}(\mathfrak{a}+n)=\widehat{S}_{\chi_1,\chi_2,k}(\mathfrak{a}).$$
\end{corollary}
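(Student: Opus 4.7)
The plan is to reduce this corollary directly to Lemma \ref{Speriodicity} by recognizing integer translation on $\mathbb{P}^1(\mathbb{Q})$ as left-multiplication by a matrix in $\Gamma_\infty$. First I would dispose of the trivial case $\mathfrak{a} = \infty$: since $\infty + n = \infty$, both sides equal $0$ by the definition of $\widehat{S}_{\chi_1,\chi_2,k}$ at the cusp $\infty$ (Definition \ref{shatDef}).

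For $\mathfrak{a} \in (\Gamma_0(q_1q_2))(\infty) \setminus \{\infty\}$, I would write $\mathfrak{a} = \gamma' \infty$ for some $\gamma' \in \Gamma_0(q_1q_2)$. Then setting $T^n = \begin{psmallmatrix} 1 & n \\ 0 & 1 \end{psmallmatrix} \in \Gamma_\infty$, one has $\mathfrak{a} + n = T^n \mathfrak{a} = (T^n \gamma') \infty$. Since $\Gamma_\infty \subseteq \Gamma_0(q_1q_2)$, the product $T^n \gamma'$ again lies in $\Gamma_0(q_1q_2)$, so $\mathfrak{a}+n$ is still in $(\Gamma_0(q_1q_2))(\infty)$ and the remark following Definition \ref{shatDef} applies: $\widehat{S}_{\chi_1,\chi_2,k}(\mathfrak{a}+n) = S_{\chi_1,\chi_2,k}(T^n \gamma')$ and $\widehat{S}_{\chi_1,\chi_2,k}(\mathfrak{a}) = S_{\chi_1,\chi_2,k}(\gamma')$.

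Now Lemma \ref{Speriodicity}, applied with $\gamma = T^n$ and $\gamma'$ as above, yields $S_{\chi_1,\chi_2,k}(T^n \gamma') = S_{\chi_1,\chi_2,k}(\gamma')$, which completes the proof upon chaining the equalities. There is no real obstacle here; the only mild subtlety is verifying that when $\mathfrak{a} \neq \infty$ one can actually find a representative $\gamma' \in \Gamma_0(q_1q_2)$ with $\gamma' \infty = \mathfrak{a}$ (this is the definition of $\mathfrak{a} \in (\Gamma_0(q_1q_2))(\infty)$) and that the representative choice does not matter, which is precisely why Lemma \ref{Speriodicity} was framed in terms of $S_{\chi_1,\chi_2,k}$ on $\gamma\gamma'$ rather than at the level of cusps.
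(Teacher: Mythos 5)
Your proposal is correct and follows essentially the same argument as the paper: write $\mathfrak{a}=\gamma'\infty$, observe $\mathfrak{a}+n=T^n\gamma'\infty$ with $T^n=\begin{psmallmatrix}1 & n \\ 0 & 1\end{psmallmatrix}\in\Gamma_\infty$, and conclude via the remark after Definition \ref{shatDef} together with Lemma \ref{Speriodicity}. Your separate treatment of $\mathfrak{a}=\infty$ is a harmless (slightly more careful) addition.
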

\begin{proof}
    Since $\mathfrak{a}\in(\Gamma_0(q_1q_2))(\infty)$ there exists $\gamma\in\Gamma_0(q_1q_2)$ such that $\mathfrak{a}=\gamma\infty$. Now let $T=\begin{psmallmatrix}
        1 & 1 \\ 0 & 1
    \end{psmallmatrix}$ and note that $T^n=\begin{psmallmatrix}
        1 & n \\ 0 & 1
    \end{psmallmatrix}\in\Gamma_\infty$. By a simple calculation $\mathfrak{a}+n=T^n\gamma\infty$; thus, by Definition \ref{shatDef} and Lemma \ref{Speriodicity} we have that
    $$\widehat{S}_{\chi_1,\chi_2,k}(\mathfrak{a}+n)=\widehat{S}_{\chi_1,\chi_2,k}(T^n\gamma\infty)=S_{\chi_1,\chi_2,k}(T^n\gamma)=S_{\chi_1,\chi_2,k}(\gamma)=\widehat{S}_{\chi_1,\chi_2,k}(\gamma\infty)=\widehat{S}_{\chi_1,\chi_2,k}(\mathfrak{a}).$$
\end{proof}
To highlight the distinctive visual style of quantum modular forms, on the next page we present some graphs of $\widehat{S}_{\chi_1,\chi_2,k}(\mathfrak{a})$ over a range of values $\mathfrak{a}\in(\Gamma_1(q_1q_2))(\infty)\subseteq\mathbb{P}^1(\mathbb{Q})$.
Let $\mathcal{C}_j(N)=\{\gamma\infty:\gamma=\begin{psmallmatrix}
    a & b \\ c & d
\end{psmallmatrix}\in\Gamma_1(N),\,1\leq a<jN,\,N\leq c<jN)\}$. Additionally, let $k=4$ and $\chi_1=\chi_2$ be the unique quadratic primitive character modulo $5$. First, we provide a scatter plot for $\widehat{S}_{\chi_1,\chi_2,k}(\mathcal{C}_{50}(q_1q_2))$.
\vspace*{-4mm}
\begin{figure}[htbp]
    \centering
    \includegraphics[width=.8\linewidth]{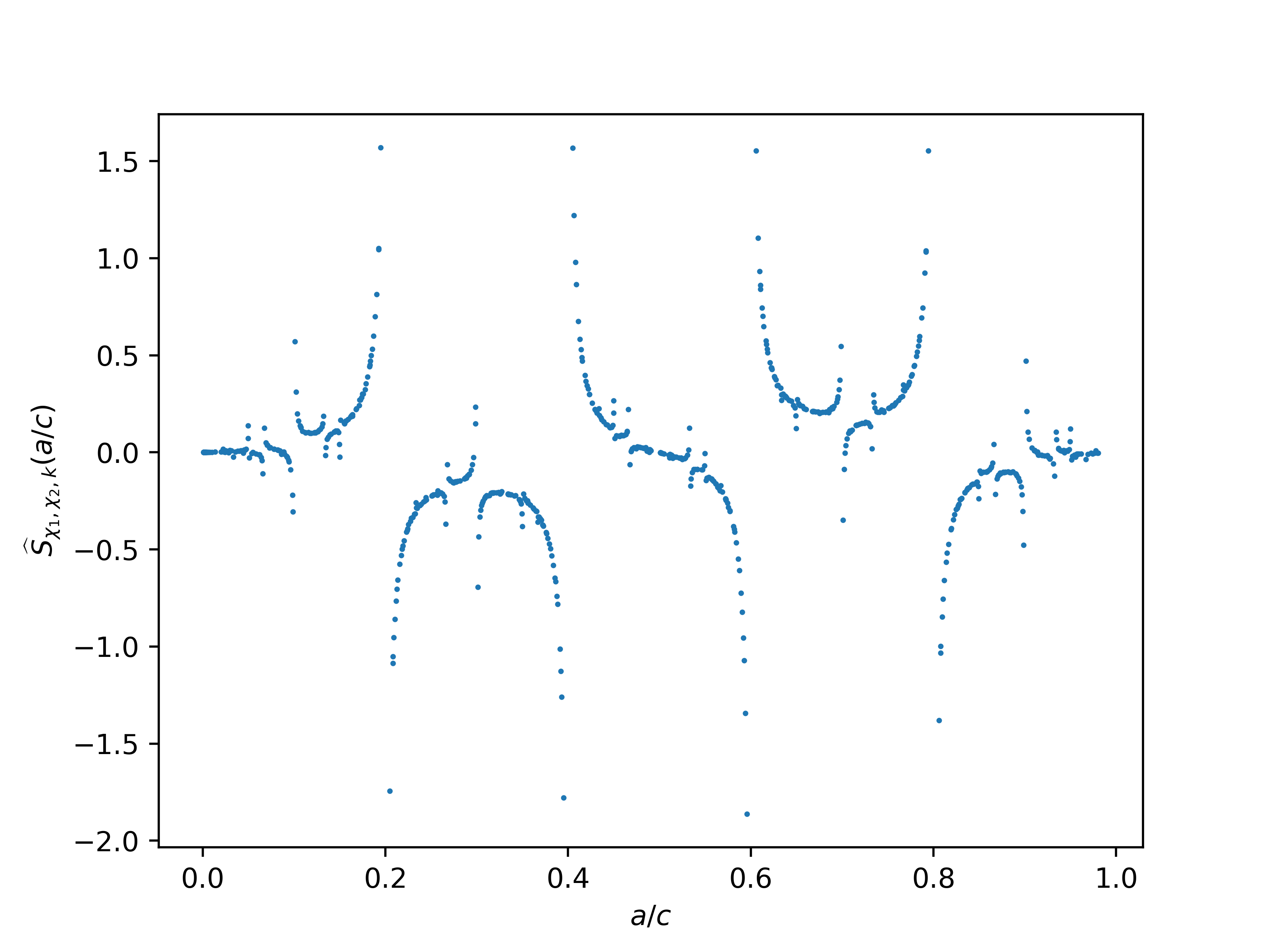}
    \caption{Scatter plot of $\widehat{S}_{\chi_1,\chi_2,k}(\mathcal{C}_{50}(q_1q_2))$}
    \label{fig1}
\end{figure}

\noindent Note that Figure \ref{fig1} is $1$-periodic in $\mathfrak{a}$ by Lemma \ref{ShatPeriodicity}.\newpage
\noindent Next, we provide a similar visualization for the associated $h_{\gamma,\chi_1,\chi_2,k}(\mathcal{C}_{15}(q_1q_2))$ for selected matrices in $\Gamma_1(q_1q_2)$, specifically $\gamma_1=\begin{psmallmatrix}
    26 & 1 \\ 25 & 1
\end{psmallmatrix}$ and $\gamma_2=\begin{psmallmatrix}
    51 & 104 \\ 25 & 51
\end{psmallmatrix}$.

\vspace*{-2mm}
\begin{figure*}[htbp]
    \subfloat[$h_{\gamma_1,\chi_1,\chi_2,k}(\mathcal{C}_{15}(q_1q_2))$ for $\gamma_1=\begin{psmallmatrix}
        26 & 1 \\ 25 & 1
    \end{psmallmatrix}$]{%
        \includegraphics[width=.47\linewidth]{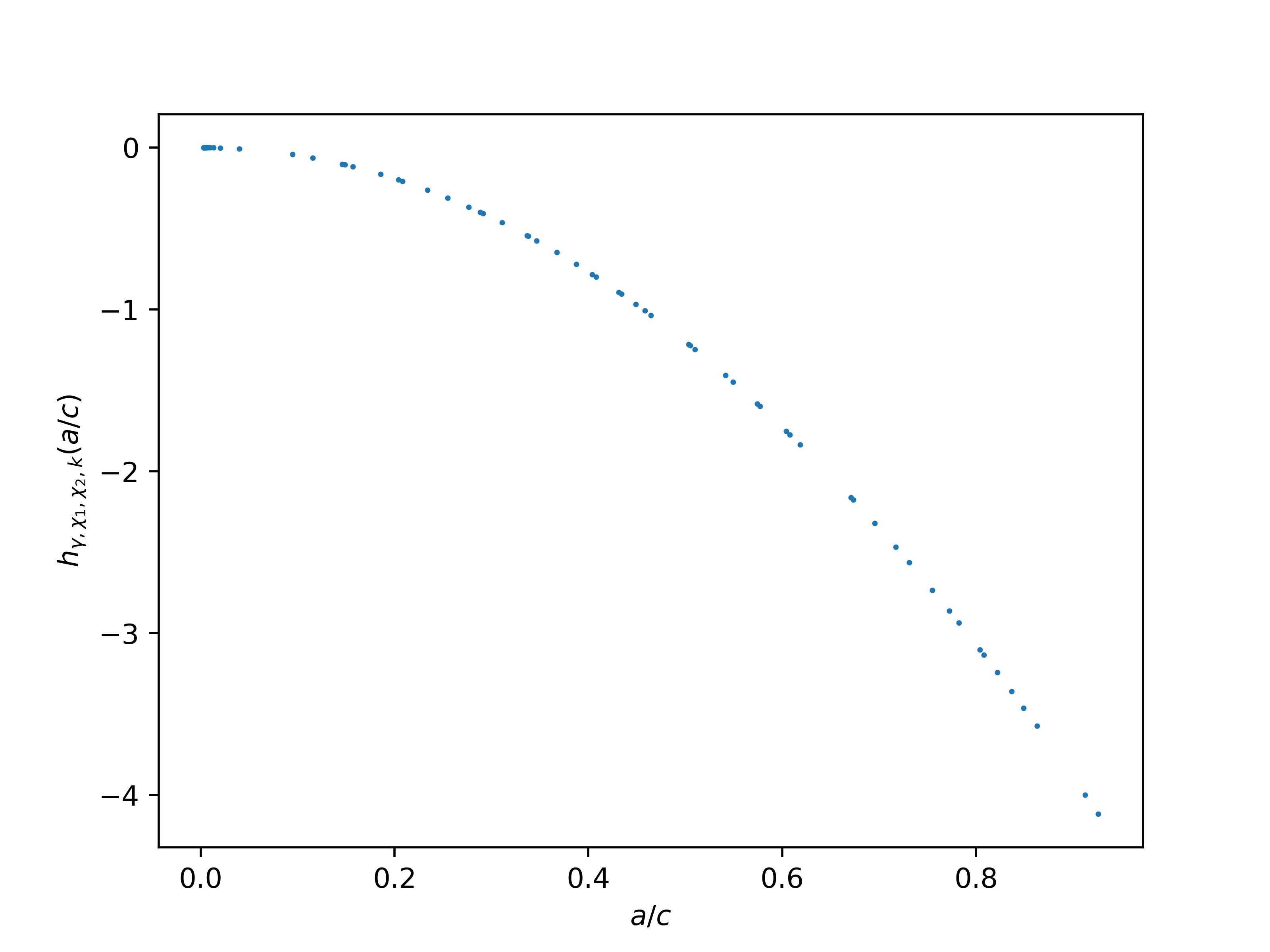}%
        \label{subfig:a}%
    }\hfill
    \subfloat[$h_{\gamma_2,\chi_1,\chi_2,k}(\mathcal{C}_{15}(q_1q_2))$ for $\gamma_2=\begin{psmallmatrix}
        51 & 104 \\ 25 & 51
    \end{psmallmatrix}$]{%
        \includegraphics[width=.47\linewidth]{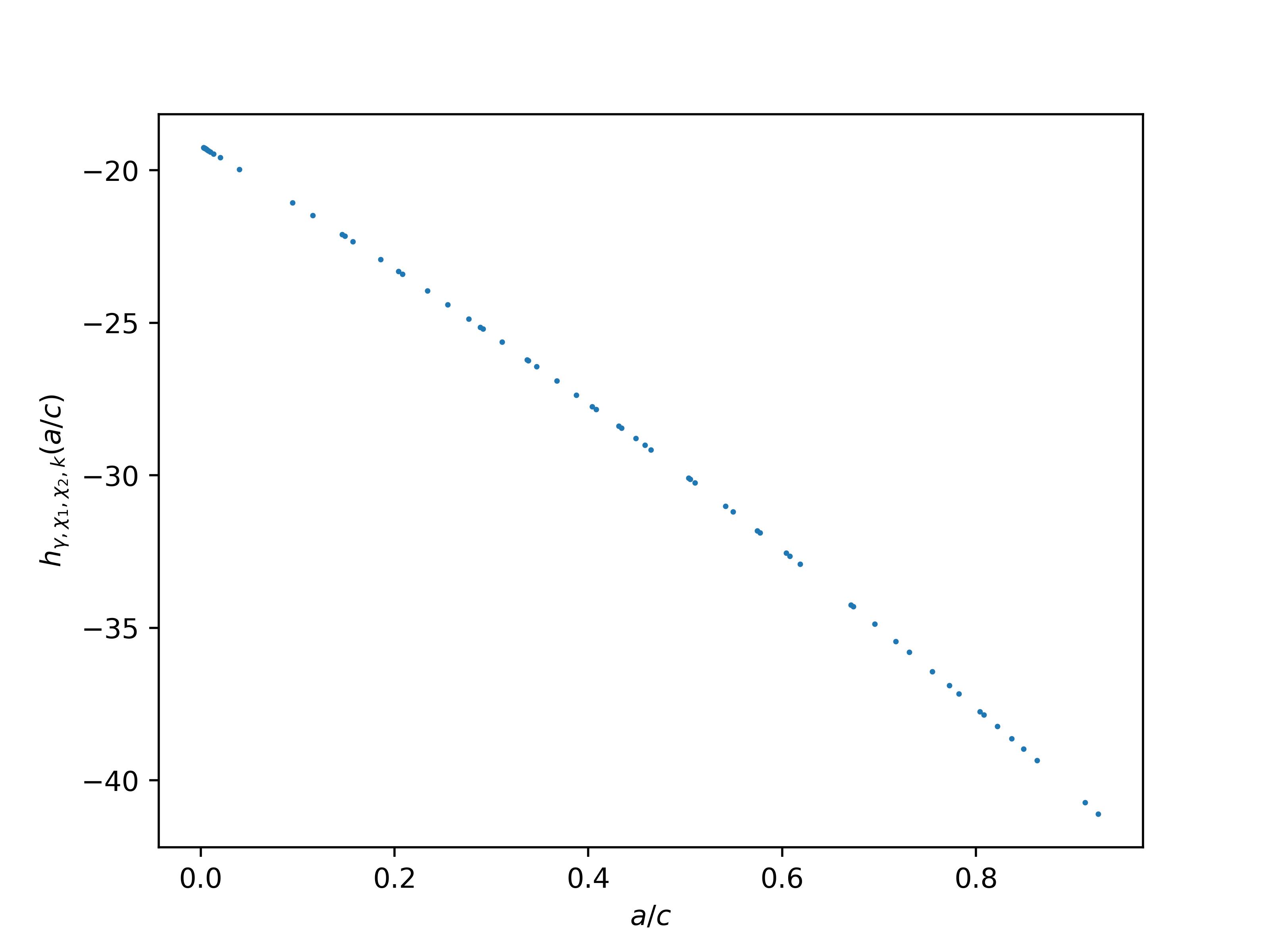}%
        \label{subfig:b}%
    }
    \caption{Scatter plots of $h_{\gamma,\chi_1,\chi_2,k}(\mathcal{C}_{15}(q_1q_2))$ for two selected values of $\gamma\in\Gamma_1(q_1q_2)$}
    \label{fig2}
\end{figure*}

\noindent Note that our $h_{\gamma_1,\chi_1,\chi_2,k}$ and $h_{\gamma_2,\chi_1,\chi_2,k}$ in Figure \ref{fig2} are not $1$-periodic, as $h_{\gamma_1,\chi_1,\chi_2,k}$ and $h_{\gamma_2,\chi_1,\chi_2,k}$ are exactly interpolated by quadratic polynomials as per Lemma \ref{qmfLemma}. More precisely, for $\mathfrak{a}\in(\Gamma_1(q_1q_2))(\infty)\setminus\{\infty\}$ we have that
$$h_{\gamma_1,\chi_1,\chi_2,k}(\mathfrak{a})=-\frac{24}{5}\mathfrak{a}^2\qquad\text{and}\qquad h_{\gamma_2,\chi_1,\chi_2,k}(\mathfrak{a})=-\frac{24}{5}\mathfrak{a}^2-\frac{96}{5}\mathfrak{a}-\frac{96}{5}.$$
Next we compute the associated $h_{\gamma,\chi_1,\chi_2,k}(\mathcal{C}_{15}(q_1q_2))$ for $\gamma=\gamma_1\gamma_2$ and $\gamma=\gamma_2\gamma_1$ to verify the crossed homomorphism relation of Lemma \ref{qmfCrossHom} holds on the above polynomials.
\vspace*{-2mm}
\begin{figure*}[htbp]
    \subfloat[$h_{\gamma_1\gamma_2,\chi_1,\chi_2,k}(\mathcal{C}_{15}(q_1q_2))$ for $\gamma_1\gamma_2=\begin{psmallmatrix}
        1351 & 2755 \\ 1300 & 2651
    \end{psmallmatrix}$]{%
        \includegraphics[width=.47\linewidth]{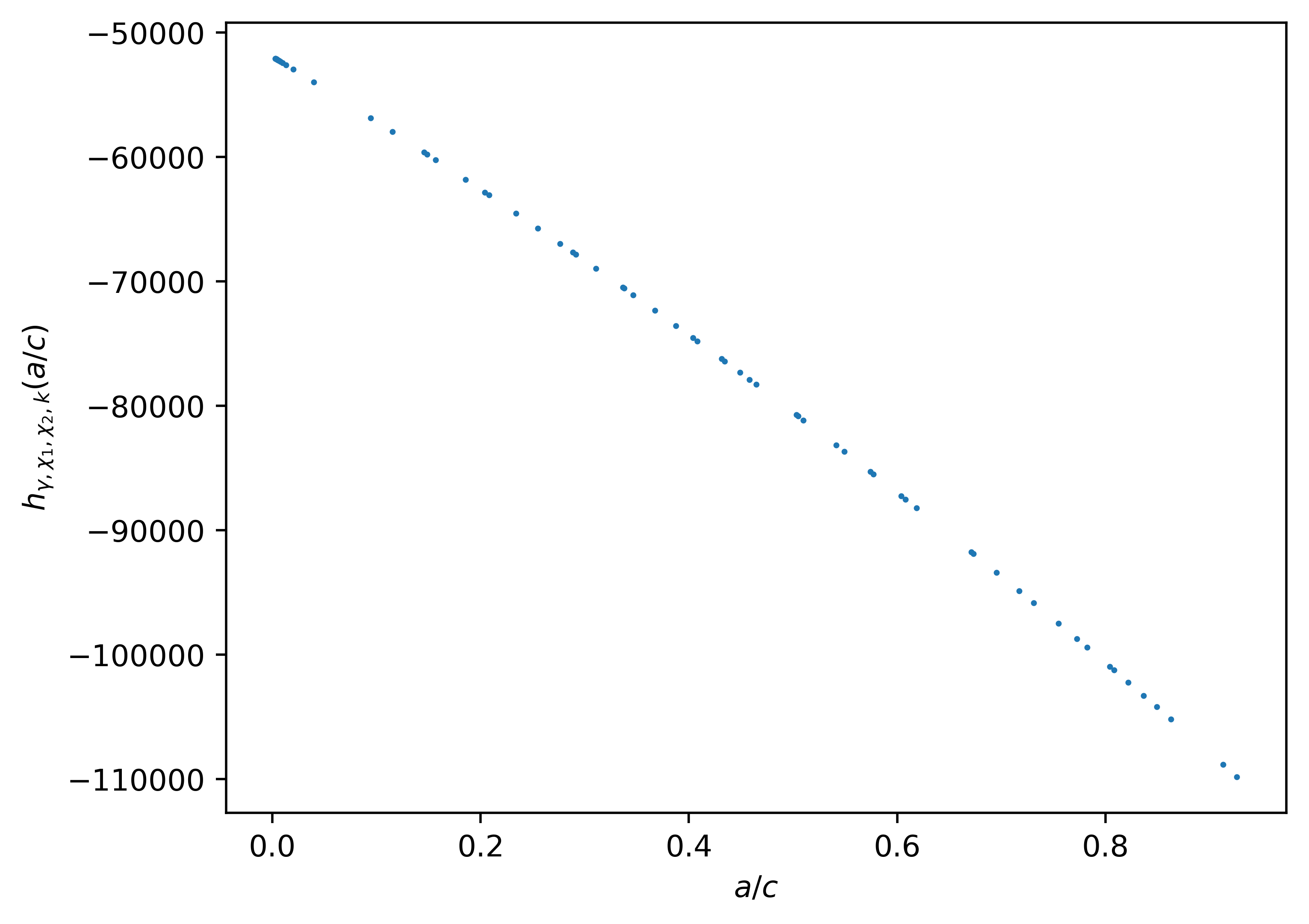}%
        \label{subfig:c}%
    }\hfill
    \subfloat[$h_{\gamma_2\gamma_1,\chi_1,\chi_2,k}(\mathcal{C}_{15}(q_1q_2))$ for $\gamma_2\gamma_1=\begin{psmallmatrix}
        3926 & 155 \\ 1925 & 76
    \end{psmallmatrix}$]{%
        \includegraphics[width=.47\linewidth]{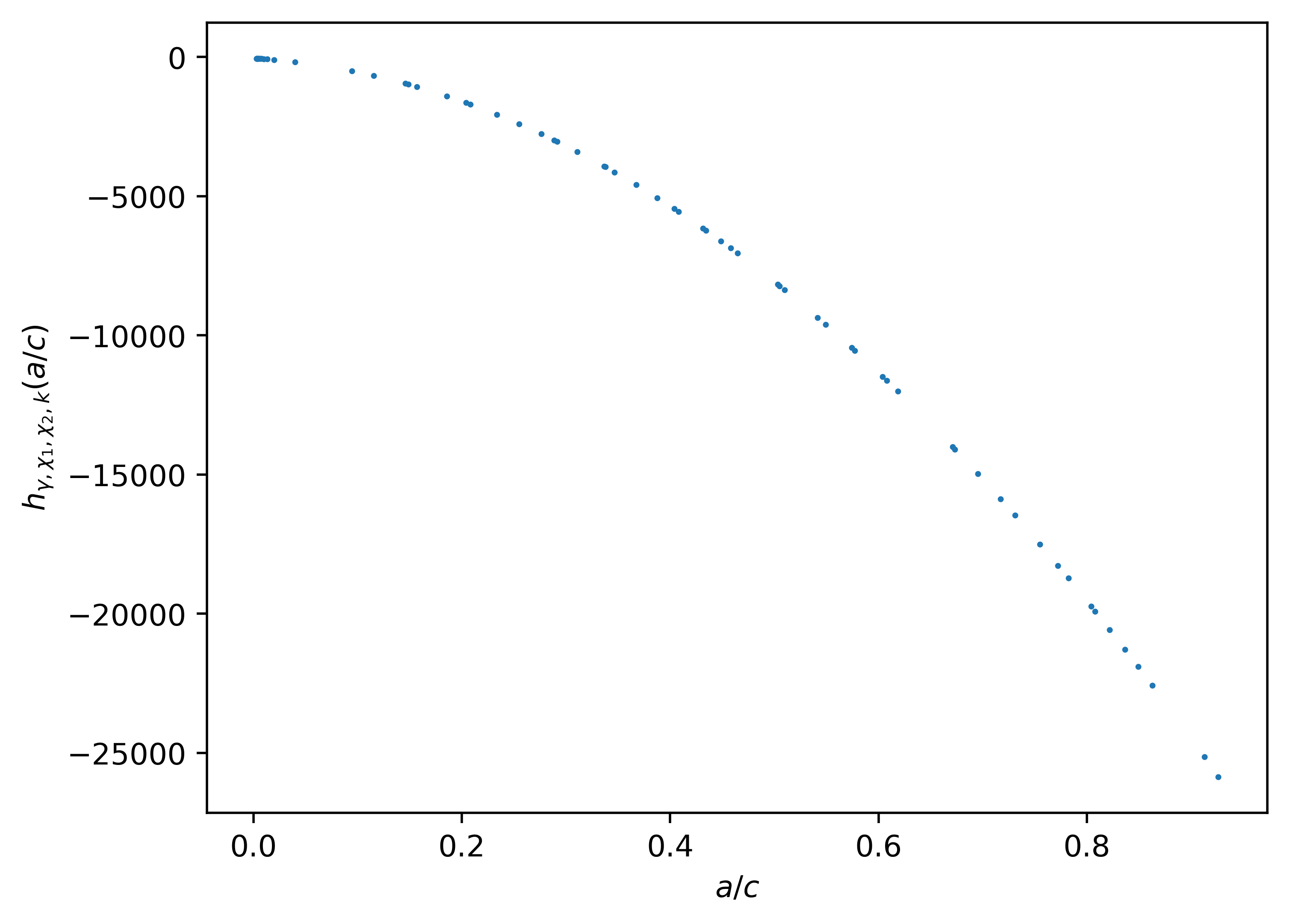}%
        \label{subfig:d}%
    }
    \caption{Scatter plots of $h_{\gamma,\chi_1,\chi_2,k}(\mathcal{C}_{15}(q_1q_2))$ for $\gamma=\gamma_1\gamma_2$ and $\gamma=\gamma_2\gamma_1$.}
    \label{fig3}
\end{figure*}

\noindent Again, note that $h_{\gamma_1\gamma_2,\chi_1,\chi_2,k}$ and $h_{\gamma_2\gamma_1,\chi_1,\chi_2,k}$ in Figure \ref{fig3} are not $1$-periodic, as $h_{\gamma_1\gamma_2,\chi_1,\chi_2,k}$ and $h_{\gamma_2\gamma_1,\chi_1,\chi_2,k}$ are exactly interpolated by quadratic polynomials as per Lemma \ref{qmfLemma}. More precisely, for $\mathfrak{a}\in(\Gamma_1(q_1q_2))(\infty)\setminus\{\infty\}$ we have that
$$h_{\gamma_1\gamma_2,\chi_1,\chi_2,k}(\mathfrak{a})=-\frac{62448}{5}\mathfrak{a}^2-\frac{254688}{5}\mathfrak{a}-51936\qquad\text{and}\qquad h_{\gamma_2\gamma_1,\chi_1,\chi_2,k}(\mathfrak{a})=-\frac{138648}{5}\mathfrak{a}^2-\frac{10944}{5}\mathfrak{a}-\frac{216}{5}.$$
Indeed, Lemma \ref{qmfCrossHom} can compute these polynomials given $h_{\gamma_1,\chi_1,\chi_2,k}$ and $h_{\gamma_2,\chi_1,\chi_2,k}$. For $h_{\gamma_1\gamma_2,\chi_1,\chi_2,k}$ note
\begin{align*}
    h_{\gamma_1,\chi_1,\chi_2,k}|_{2-k}\gamma_2(\mathfrak{a})+h_{\gamma_2,\chi_1,\chi_2,k}(\mathfrak{a})&=-\frac{24}{5}(51\mathfrak{a}+104)^2-\frac{24}{5}\mathfrak{a}^2-\frac{96}{5}\mathfrak{a}-\frac{96}{5} \\
    &=-\frac{62448}{5}\mathfrak{a}^2-\frac{254688}{5}\mathfrak{a}-51936 \\
    &=h_{\gamma_1\gamma_2,\chi_1,\chi_2,k}(\mathfrak{a}).
\end{align*}
And similarly for $h_{\gamma_2\gamma_1,\chi_1,\chi_2,k}$ note
\begin{align*}
    h_{\gamma_2,\chi_1,\chi_2,k}|_{2-k}\gamma_1(\mathfrak{a})+h_{\gamma_1,\chi_1,\chi_2,k}(\mathfrak{a})&=-\frac{24}{5}(26\mathfrak{a}+1)^2-\frac{96}{5}(26\mathfrak{a}+1)(25\mathfrak{a}+1)-\frac{96}{5}(25\mathfrak{a}+1)^2-\frac{24}{5}\mathfrak{a}^2 \\
    &=-\frac{138648}{5}\mathfrak{a}^2-\frac{10944}{5}\mathfrak{a}-\frac{216}{5} \\
    &=h_{\gamma_2\gamma_1,\chi_1,\chi_2,k}(\mathfrak{a}).
\end{align*}
This explicit example serves as a nice consistency check of the validity of Lemma \ref{qmfCrossHom}.

\section{Arithmetic Properties of $\widetilde{S}_{\chi_1,\chi_2,k}$}\label{arithmeticProps}
To highlight the arithmetic properties of the image $S_{\chi_1,\chi_2,k}(\Gamma_1(q_1q_2))$, we choose a different normalization for our Dedekind sums and define some general notation as in Definition \ref{stildeDef}. Now we define some further notation specific to the context of the following computation.
\begin{definition}
    We define the following Dirichlet characters:
    \begin{itemize}
        \item Let $\chi_3$, $\chi_4$, $\chi_5$, and $\chi_7$ be the unique quadratic primitive characters modulo $3$, $4$, $5$, and $7$ respectively.
        \item Let $\chi_{8a}$ and $\chi_{8b}$ be the unique primitive quadratic characters modulo $8$ such that $1=\chi_{8a}(7)$ and $-1=\chi_{8b}(7)$. 
        % \item Let $\chi_3$ be the unique quadratic character modulo $3$ such that $\chi_3(2)\mapsto -1$.
        % \item Let $\chi_4$ be the unique quadratic character modulo $4$ such that $\chi_4(3)\mapsto -1$.
        % \item Let $\chi_5$ be the unique quadratic character modulo $5$ such that $\chi_5(2)\mapsto -1$.
        % \item Let $\chi_7$ be the unique quadratic character modulo $7$ such that $\chi_7(3)\mapsto -1$.
        % \item Let $\chi_{8a}$ be the unique quadratic character modulo $8$ such that $\chi_{8a}(5)\mapsto -1$ and $\chi_{8a}(7)\mapsto 1$.
        % \item Let $\chi_{8b}$ be the unique quadratic character modulo $8$ such that $\chi_{8b}(5)\mapsto -1$ and $\chi_{8b}(7)\mapsto -1$.
    \end{itemize}
\end{definition}

\subsection{Computation}\label{computation}
Let $G_j(N)=\{\begin{psmallmatrix}
    a & b \\ c & d
\end{psmallmatrix}\in\Gamma_1(N):1\leq a<jN,\,N\leq c<jN\}$. For each pair of quadratic primitive characters $(\chi_1,\chi_2)$ such that $q_1q_2\leq 32$ and $\chi_1\chi_2(-1)=(-1)^k$ for $2\leq k\leq 9$, we compute the largest value of $r\in\mathbb{Q}_{\geq 0}$ such that $\widetilde{S}_{\chi_1,\chi_2,k}(G_{50}(q_1q_2))\subseteq r\mathbb{Z}$. The results are presented in three tables on the next page. Additionally, some values are not expressed in reduced form to highlight the pattern stated in Conjecture \ref{conj611}. We have the following values of $r$ for each pair of $(\chi_1,\chi_2)$ and $k$:
\begin{center}
\begin{tabular}{c|c c c c c c}
   \backslashbox{$k$}{$(\chi_1,\chi_2)$} & $(\chi_3,\chi_3)$ & $(\chi_3,\chi_4)$ & $(\chi_4,\chi_3)$ & $(\chi_4,\chi_4)$ & $(\chi_3,\chi_7)$ & $(\chi_7,\chi_3)$ \\
   \hline
   $2$ & $2$ & $2$ & $2$ & $2$ & $2$ & $2$ \\
   $4$ & $2$ & $2/3$ & $6/4$ & $6$ & $2/3$ & $6/7$ \\
   $6$ & $10/3$ & $10$ & $10/4$ & $10$ & $10$ & $10/7$ \\
   $8$ & $14$ & $14/3$ & $14/4$ & $14$ & $14/3$ & $2$
\end{tabular}
\end{center}
\begin{center}
\begin{tabular}{c|c c c c c c c}
    \backslashbox{$k$}{$(\chi_1,\chi_2)$} & $(\chi_3,\chi_{8b})$ & $(\chi_{8b},\chi_3)$ & $(\chi_5,\chi_5)$ & $(\chi_4,\chi_7)$ & $(\chi_7,\chi_4)$ & $(\chi_4,\chi_{8b})$ & $(\chi_{8b},\chi_4)$ \\
    \hline
    $2$ & $2$ & $2$ & $2$ & $2$ & $2$ & $2$ & $2$ \\
    $4$ & $2/3$ & $3$ & $6/5$ & $6/4$ & $6/7$ & $6$ & $6$ \\
    $6$ & $10/3$ & $5$ & $10$ & $10/4$ & $10/7$ & $10$ & $10$ \\
    $8$ & $14/3$ & $7$ & $14/5$ & $14/4$ & $2$ & $14$ & $14$
\end{tabular}
\end{center}
\begin{center}
\begin{tabular}{c|c c c c c c c c}
    \backslashbox{$k$}{$(\chi_1,\chi_2)$} & $(\chi_3,\chi_5)$ & $(\chi_5,\chi_3)$ & $(\chi_4,\chi_5)$ & $(\chi_5,\chi_4)$ & $(\chi_3,\chi_{8a})$ & $(\chi_{8a},\chi_3)$ & $(\chi_4,\chi_{8a})$ & $(\chi_{8a},\chi_4)$ \\
    \hline
    $3$ & $4$ & $4/5$ & $4$ & $4/5$ & $4$ & $2$ & $4$ & $4$ \\
    $5$ & $8/3$ & $8/5$ & $4$ & $8/5$ & $8/3$ & $4$ & $8$ & $8$ \\
    $7$ & $4$ & $12/5$ & $6$ & $12/5$ & $4/3$ & $6$ & $12$ & $12$ \\
    $9$ & $16$ & $16/5$ & $8$ & $16/5$ & $16$ & $8$ & $16$ & $16$
\end{tabular}
\end{center}

\subsection{Proof of Theorem \ref{containmentTheorem} and Extending the Computation from $G_{50}(q_1q_2)$ to $\Gamma_1(q_1q_2)$}\label{sec623}
Using the properties of quantum modularity developed in Section \ref{qmfSec}, we can show that the containment illustrated in the tables of Section \ref{computation} extend from $G_{50}(q_1q_2)$ to $\Gamma_1(q_1q_2)$; however, note that this does not constitute a complete proof of the first point of Conjecture \ref{conj611} as we do not demonstrate equality. Recalling Definition \ref{weirdPolynomialSpace}, we develop some preliminaries on $\mathcal{P}(k;m,q)$.
\begin{lemma}\label{lem613}
    For $\gamma_1,\gamma_2\in\Gamma_0(q_1q_2)$, if $\mathcal{P}_0\in\mathcal{P}(k;m,q_1)$ then $\mathcal{P}_0|_{2-k}\gamma_2\in\mathcal{P}(k;m,q_1)$ also.
\end{lemma}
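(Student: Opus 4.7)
The plan is to verify the divisibility condition for $\mathcal{P}_0|_{2-k}\gamma_2$ directly, by expanding via the binomial theorem and tracking powers of $q_1$. Write $\gamma_2=\begin{psmallmatrix} A & B \\ C & D \end{psmallmatrix}$ with $q_1 q_2 \mid C$ (so in particular $q_1 \mid C$), and $\mathcal{P}_0(x)=\sum_{n=0}^{k-2} a_n\,x^{k-n-2}$ with $q_1^{n+1}a_n \in m\mathbb{Z}$ for each $n$. Unpacking the slash operator gives
$$\mathcal{P}_0|_{2-k}\gamma_2(x)=(Cx+D)^{k-2}\,\mathcal{P}_0\!\left(\tfrac{Ax+B}{Cx+D}\right)=\sum_{n=0}^{k-2}a_n\,(Ax+B)^{k-n-2}(Cx+D)^n,$$
and expanding the two factors yields
$$\mathcal{P}_0|_{2-k}\gamma_2(x)=\sum_{n=0}^{k-2}a_n\sum_{i=0}^{k-n-2}\sum_{j=0}^{n}\binom{k-n-2}{i}\binom{n}{j}A^iB^{k-n-2-i}C^jD^{n-j}x^{i+j}.$$

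The first step will be to collect the coefficient $b_\ell$ of $x^{k-\ell-2}$, which is the sum of the above summands over all $(n,i,j)$ with $i+j=k-\ell-2$, and to make the key combinatorial observation: the constraint $i\le k-n-2$ forces $j\ge n-\ell$ (while trivially $j\ge 0$). The divisibility $q_1\mid C$ then supplies $q_1^j \mid C^j$, which will interact with the bound $j\ge n-\ell$ to yield exactly the powers of $q_1$ that are needed.

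The second step will split into two cases to show $q_1^{\ell+1}b_\ell\in m\mathbb{Z}$. If $n\le \ell$, then $q_1^{\ell+1}a_n=q_1^{\ell-n}(q_1^{n+1}a_n)\in m\mathbb{Z}$ since $\ell-n\ge 0$, and multiplying by the remaining integer factors $\binom{k-n-2}{i}\binom{n}{j}A^iB^{k-n-2-i}C^jD^{n-j}$ keeps the summand in $m\mathbb{Z}$. If instead $n>\ell$, then $j\ge n-\ell\ge 1$, so $q_1^{n-\ell}\mid C^j$ and
$$q_1^{\ell+1}a_n\,C^j=(q_1^{n+1}a_n)\cdot\frac{C^j}{q_1^{n-\ell}}\in m\mathbb{Z},$$
and again the remaining integer factors preserve membership in $m\mathbb{Z}$. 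Summing over all admissible $(n,i,j)$ gives $q_1^{\ell+1}b_\ell\in m\mathbb{Z}$ for each $0\le\ell\le k-2$, hence $\mathcal{P}_0|_{2-k}\gamma_2\in\mathcal{P}(k;m,q_1)$.

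There is no real obstacle here; the argument is essentially bookkeeping. The only nontrivial ingredient is the divisibility $q_1\mid C$ that comes from $\gamma_2\in\Gamma_0(q_1 q_2)$, and it dovetails exactly with the combinatorial lower bound $j\ge n-\ell$ to furnish precisely the cancellation required to match the shift between $q_1^{n+1}$ and $q_1^{\ell+1}$. The parameter $\gamma_1$ in the statement of the lemma plays no role and appears to be a vestigial listing—only $\gamma_2$ is needed for the conclusion.
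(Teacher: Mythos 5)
Your proof is correct and takes essentially the same route as the paper: expand $\mathcal{P}_0|_{2-k}\gamma_2$ via the binomial theorem, collect the coefficient of $x^{k-\ell-2}$, and use $q_1\mid c$ (lower-left entry of $\gamma_2$) to supply exactly the powers of $q_1$ needed to pass from $q_1^{n+1}a_n\in m\mathbb{Z}$ to the condition at index $\ell$; the paper simply absorbs the factor $q_1^{j-n}$ into $c^{n-i}$ uniformly instead of splitting into your cases $n\le\ell$ and $n>\ell$, which is the same bookkeeping. Your observation that $\gamma_1$ is never used in the statement is also accurate.
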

\begin{proof}
    We first write $\mathcal{P}_0(x)=\sum_{n=0}^{k-2}a_nx^{k-n-2}$. Now let $\gamma_2=\begin{psmallmatrix}
        a & b \\ c & d
    \end{psmallmatrix}$. Thus,
    $$\mathcal{P}_0|_{2-k}\gamma_2(x)=\sum_{n=0}^{k-2}a_n(cx+d)^n(ax+b)^{k-n-2}.$$
    Now note that
    $$(cx+d)^n(ax+b)^{k-n-2}=\left(\sum_{i=0}^n\binom{n}{i}(cx)^{n-i}\,d^i\right)\left(\sum_{i=0}^{k-n-2}\binom{k-n-2}{i}(ax)^{k-n-2-i}\,b^i\right).$$
    We can re-index this as
    $$(cx+d)^n(ax+b)^{k-n-2}=\sum_{j=0}^{k-2}x^j\sum_{i=0}^j\binom{n}{i}\binom{k-n-2}{j-i}a^{k-n-2-j+i}\,b^{j-i}\,c^{n-i}\,d^i.$$
    Thus,
    $$\mathcal{P}_0|_{2-k}\gamma_2(x)=\sum_{j=0}^{k-2}x^j\left(\sum_{n=0}^{k-2}a_n\sum_{i=0}^j\binom{n}{i}\binom{k-n-2}{j-i}a^{k-n-2-j+i}\,b^{j-i}\,c^{n-i}\,d^i\right).$$
    So we need to verify that
    $$b_j=q_1^{j+1}\sum_{n=0}^{k-2}a_n\sum_{i=0}^j\binom{n}{i}\binom{k-n-2}{j-i}a^{k-n-2-j+i}\,b^{j-i}\,c^{n-i}\,d^i\in m\mathbb{Z},$$
    as this proves that $\mathcal{P}_0|_{2-k}\gamma_2\in\mathcal{P}(k;m,q_1)$. We have that
    $$b_j=\sum_{n=0}^{k-2}q_1^{n+1}a_n\sum_{i=0}^j\binom{n}{i}\binom{k-n-2}{j-i}q_1^{j-n}\,a^{k-n-2-j+i}\,b^{j-i}\,c^{n-i}\,d^i.$$
    But now note that $c^{n-i}=(q_1\,c')^{n-i}$ for some integer $c'$ since $q_1\mid c$. Thus,
    $$b_j=\sum_{n=0}^{k-2}q^{n+1}a_n\sum_{i=0}^j\binom{n}{i}\binom{k-n-2}{j-i}q_1^{j-i}\,a^{k-n-2-j+i}\,b^{j-i}\,c'^{n-i}\,d^i.$$
    Now note that $q_1^{j-i}b^{j-i}\in\mathbb{Z}$ since $i\leq j$, $\binom{k-n-2}{j-i}a^{k-n-2-j+i}\in\mathbb{Z}$, $\binom{n}{i}c'^{n-i}\in\mathbb{Z}$, and $d^i\in\mathbb{Z}$ since $0\leq i$. Additionally, $q_1^{n+1}a_n\in m\mathbb{Z}$ since $\mathcal{P}_0\in\mathcal{P}(k;m,q_1)$. Thus, $b_j\in m\mathbb{Z}$ as desired, so $\mathcal{P}_0|_{2-k}\gamma_2\in\mathcal{P}(k;m,q_1)$.
\end{proof}
\begin{lemma}\label{lem614}
    If $\gamma=\begin{psmallmatrix}
        a & b \\ c & d
    \end{psmallmatrix}\in\Gamma_1(q_1q_2)$ and $\mathcal{P}_0\in\mathcal{P}(k;m,q_1)$, then $c^{k-2}\,\mathcal{P}_0(\gamma\infty)\in m\mathbb{Z}/q_1$.
\end{lemma}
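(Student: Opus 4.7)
The plan is to just unwind the definitions and check divisibility term by term, exploiting the fact that $\gamma \in \Gamma_1(q_1q_2)$ forces $q_1 \mid c$. I would write $\mathcal{P}_0(x) = \sum_{n=0}^{k-2} a_n x^{k-n-2}$ with $q_1^{n+1} a_n \in m\mathbb{Z}$ by definition of $\mathcal{P}(k;m,q_1)$, and then use $\gamma\infty = a/c$ to compute
$$c^{k-2}\,\mathcal{P}_0(\gamma\infty) = c^{k-2}\sum_{n=0}^{k-2} a_n \left(\frac{a}{c}\right)^{k-n-2} = \sum_{n=0}^{k-2} a_n\, a^{k-n-2}\, c^n.$$

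To show this lies in $m\mathbb{Z}/q_1$, it suffices to show that $q_1$ times the right-hand side lies in $m\mathbb{Z}$. The key algebraic rearrangement is to rewrite each summand as
$$q_1\, a_n\, a^{k-n-2}\, c^n = \bigl(q_1^{n+1}\, a_n\bigr)\cdot a^{k-n-2}\cdot \left(\frac{c}{q_1}\right)^{\!n},$$
which redistributes the $n+1$ copies of $q_1$ exactly as needed.

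Now each factor is an integer and the first factor lies in $m\mathbb{Z}$: the hypothesis $\mathcal{P}_0\in\mathcal{P}(k;m,q_1)$ gives $q_1^{n+1}a_n\in m\mathbb{Z}$; the factor $a^{k-n-2}\in\mathbb{Z}$; and crucially $c/q_1\in\mathbb{Z}$ because $\gamma\in\Gamma_1(q_1q_2)\subseteq\Gamma_0(q_1q_2)$ forces $q_1q_2\mid c$, so in particular $q_1\mid c$. Summing the $k-1$ terms in $m\mathbb{Z}$ yields $q_1\,c^{k-2}\,\mathcal{P}_0(\gamma\infty)\in m\mathbb{Z}$, i.e.\ $c^{k-2}\,\mathcal{P}_0(\gamma\infty)\in m\mathbb{Z}/q_1$, as desired.

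There is no real obstacle here; the argument is a direct verification once one notices that the redistribution above pairs the $q_1^{n+1}$ from the definition of $\mathcal{P}(k;m,q_1)$ against the $n$ copies of $q_1$ available inside $c^n$, with exactly one $q_1$ left over to produce the denominator $q_1$ in the conclusion. This is the same mechanism driving Lemma \ref{lem613}, and together the two lemmas will feed directly into the proof of Theorem \ref{containmentTheorem}: namely, if $h_{\gamma,\chi_1,\chi_2,k}\in\mathcal{P}(k;m,q_1)$ for every generator $\gamma$ of a generating set $G$, then Lemma \ref{lem613} combined with the crossed homomorphism relation of Lemma \ref{qmfCrossHom} propagates membership in $\mathcal{P}(k;m,q_1)$ to arbitrary words, and Lemma \ref{lem614} evaluated at $\gamma\infty$ then yields $\widetilde{S}_{\chi_1,\chi_2,k}(\Gamma_1(q_1q_2))\subseteq m\mathbb{Z}/q_1$.
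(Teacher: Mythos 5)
Your proof is correct and follows essentially the same route as the paper: expand $\mathcal{P}_0$, evaluate at $\gamma\infty=a/c$ to get $\sum_n a_n a^{k-n-2}c^n$, and use $q_1\mid c$ to redistribute the powers of $q_1$ from the condition $q_1^{n+1}a_n\in m\mathbb{Z}$ against $c^n$, leaving exactly one $q_1$ in the denominator. The paper phrases this by substituting $a_n=a_n'/q_1^{n+1}$ and $c=c'q_1$ rather than multiplying through by $q_1$, but the computation is identical.
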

\begin{proof}
    We first write $\mathcal{P}_0(x)=\sum_{n=0}^{k-2}a_nx^{k-n-2}$. Now let $\gamma=\begin{psmallmatrix}
        a & b \\ c & d
    \end{psmallmatrix}$. Since $\gamma\in\Gamma_1(q_1q_2)$, there exists $c'\in\mathbb{Z}$ such that $c=c'q_1$. Similarly, since $\mathcal{P}_0\in\mathcal{P}(k;m,q_1)$ we have that there exists $a_n'\in m\mathbb{Z}$ such that $a_n=a_n'/q_1^{n+1}$. Thus
    $$c^{k-2}\,\mathcal{P}_0(\gamma\infty)=c^{k-2}\,\sum_{n=0}^{k-2}a_n\left(\frac{a}{c}\right)^{k-n-2}=\sum_{n=0}^{k-2}a_na^{k-n-2}c^n=\frac{1}{q_1}\sum_{n=0}^{k-2}a_n'a^{k-n-2}c'^n.$$
    Since $a_n'\in m\mathbb{Z}$ and $a,c'\in\mathbb{Z}$, the desired result follows immediately.
\end{proof}
Now we can put all of this together to prove Theorem \ref{containmentTheorem}, a sufficient condition for $\widetilde{S}_{\chi_1,\chi_2,k}(\Gamma_1(q_1q_2))\subseteq m\mathbb{Z}/q_1$.
\begin{proof}
    First note that for $\gamma^{-1}=\begin{psmallmatrix}
        a & b \\ c & d
    \end{psmallmatrix}\in\Gamma_1(q_1q_2)$, by Lemma \ref{qmfLemma} we have that
    \begin{equation}\label{bruhMoment}\widetilde{S}_{\chi_1,\chi_2,k}(\gamma^{-1})=c^{k-2}\,S_{\chi_1,\chi_2,k}(\gamma^{-1})=c^{k-2}\,h_{\gamma,\chi_1,\chi_2,k}(\gamma^{-1}\infty).\end{equation}
    Note if $h_{\gamma,\chi_1,\chi_2,k}\in\mathcal{P}(k;m,q_1)$ for all $\gamma\in G$, then by Lemma \ref{qmfCrossHom} and Lemma \ref{lem613} (with the fact that $\mathcal{P}(k;m,q_1)$ is closed under addition) we have that $h_{\gamma,\chi_1,\chi_2,k}\in\mathcal{P}(k;m,q_1)$ for all $\gamma\in\Gamma_1(q_1q_2)$. So by (\ref{bruhMoment}) and Lemma \ref{lem614} we have that $\widetilde{S}_{\chi_1,\chi_2,k}(\gamma^{-1})\in m\mathbb{Z}/q_1$ for all $\gamma^{-1}\in\Gamma_1(q_1q_2)$. Since $\gamma\mapsto\gamma^{-1}$ is a bijective map from $\Gamma_1(q_1q_2)$ to itself, it follows that $\widetilde{S}_{\chi_1,\chi_2,k}(\gamma)\in m\mathbb{Z}/q_1$ for all $\gamma\in\Gamma_1(q_1q_2)$ as desired.
\end{proof}

Now we utilize this theorem to prove the containment illustrated in the tables of Section \ref{computation} extends from $G_{50}(q_1q_2)$ to $\Gamma_1(q_1q_2)$. Although we are not able to compute $h_{\gamma,\chi_1,\chi_2,k}$ directly, by Lemma \ref{qmfLemma} we know that $h_{\gamma,\chi_1,\chi_2,k}$ is a degree $k-2$ polynomial; so, by computing $h_{\gamma,\chi_1,\chi_2,k}(\mathfrak{a})$ for $k-1$ distinct values of $\mathfrak{a}$ we can recover $h_{\gamma,\chi_1,\chi_2,k}$ via Lagrange interpolation. Using the finite sum formula of Theorem \ref{mainThm} allows us to make light work of this task with the help of a computer.

As an explicit example of this procedure, we compute these $h_{\gamma,\chi_1,\chi_2,k}$ with $k=4$ and $\chi_1=\chi_2$ being the unique quadratic primitive character modulo $5$. In the table below, we present $h_{\gamma,\chi_1,\chi_2,k}$ for all $\gamma\in G$ where $G$ is a finite generating set of $\Gamma_1(q_1q_2)$ obtained using \code{Gamma1(q1q2).gens() }in \code{SageMath }where \code{q1q2 }is a placeholder variable for the product $q_1q_2$.

\begin{center}
\renewcommand{\arraystretch}{2.1}
\begin{tabular}{c c c}
\begin{tabular}{c|c}
    $\gamma\in G$ & $h_{\gamma,\chi_1,\chi_2,k}(\mathfrak{a})$ \\
    \hline
    $\begin{psmallmatrix}
        1 & 1 \\ 0 & 1
    \end{psmallmatrix}$ & $0$ \\
    $\begin{psmallmatrix}
        51 & -4 \\ 625 & -49
    \end{psmallmatrix}$ & $-5340\mathfrak{a}^2+\dfrac{4176}{5}\mathfrak{a}-\dfrac{816}{25}$ \\
    $\begin{psmallmatrix}
        -74 & 7 \\ -275 & 26
    \end{psmallmatrix}$ & $\dfrac{50244}{5}\mathfrak{a}^2-\dfrac{9576}{5}\mathfrak{a}+\dfrac{456}{5}$ \\
    $\begin{psmallmatrix}
        -149 & 16 \\ -475 & 51
    \end{psmallmatrix}$ & $\dfrac{250206}{5}\mathfrak{a}^2-10752\mathfrak{a}+\dfrac{14442}{25}$ \\
    $\begin{psmallmatrix}
        76 & -9 \\ 625 & -74
    \end{psmallmatrix}$ & $-39240\mathfrak{a}^2+\dfrac{46464}{5}\mathfrak{a}-\dfrac{13752}{25}$ \\
    $\begin{psmallmatrix}
        51 & -7 \\ 175 & -24
    \end{psmallmatrix}$ & $\dfrac{17262}{5}\mathfrak{a}^2-\dfrac{4692}{5}\mathfrak{a}+\dfrac{1596}{25}$ \\
    $\begin{psmallmatrix}
        -274 & 43 \\ -325 & 51
    \end{psmallmatrix}$ & $\dfrac{86262}{5}\mathfrak{a}^2-\dfrac{27036}{5}\mathfrak{a}+\dfrac{10596}{25}$ \\
    $\begin{psmallmatrix}
        201 & -35 \\ 425 & -74
    \end{psmallmatrix}$ & $-\dfrac{21738}{5}\mathfrak{a}^2+\dfrac{7632}{5}\mathfrak{a}-\dfrac{3342}{25}$ \\
    $\begin{psmallmatrix}
        26 & -5 \\ 125 & -24
    \end{psmallmatrix}$ & $16956\mathfrak{a}^2-\dfrac{31908}{5}\mathfrak{a}+\dfrac{3006}{5}$ \\
    $\begin{psmallmatrix}
        101 & -23 \\ 325 & -74
    \end{psmallmatrix}$ & $\dfrac{116838}{5}\mathfrak{a}^2-\dfrac{53136}{5}\mathfrak{a}+\dfrac{30198}{25}$ \\
    $\begin{psmallmatrix}
        226 & -59 \\ 475 & -124
    \end{psmallmatrix}$ & $-\dfrac{34056}{5}\mathfrak{a}^2+\dfrac{17772}{5}\mathfrak{a}-\dfrac{11598}{25}$ \\
    $\begin{psmallmatrix}
        -324 & 77 \\ -425 & 101
    \end{psmallmatrix}$ & $-\dfrac{246912}{5}\mathfrak{a}^2+\dfrac{117408}{5}\mathfrak{a}-\dfrac{69792}{25}$ \\
    $\begin{psmallmatrix}
        176 & -49 \\ 625 & -174
    \end{psmallmatrix}$ & $56910\mathfrak{a}^2-\dfrac{158424}{5}\mathfrak{a}+4410$ \\
    $\begin{psmallmatrix}
        -324 & 103 \\ -475 & 151
    \end{psmallmatrix}$ & $-\dfrac{235464}{5}\mathfrak{a}^2+\dfrac{149712}{5}\mathfrak{a}-\dfrac{118992}{25}$ \\
    $\begin{psmallmatrix}
        -49 & 17 \\ -75 & 26
    \end{psmallmatrix}$ & $-\dfrac{6888}{5}\mathfrak{a}^2+\dfrac{4752}{5}\mathfrak{a}-\dfrac{4104}{25}$ \\
    $\begin{psmallmatrix}
        51 & -20 \\ 125 & -49
    \end{psmallmatrix}$ & $-15294\mathfrak{a}^2+\dfrac{59304}{5}\mathfrak{a}-\dfrac{11502}{5}$ \\
    $\begin{psmallmatrix}
        276 & -121 \\ 625 & -274
    \end{psmallmatrix}$ & $-56190\mathfrak{a}^2+\dfrac{246288}{5}\mathfrak{a}-\dfrac{269874}{25}$ \\
    $\begin{psmallmatrix}
        301 & -144 \\ 625 & -299
    \end{psmallmatrix}$ & $-9090\mathfrak{a}^2+\dfrac{43476}{5}\mathfrak{a}-\dfrac{51984}{25}$
\end{tabular} & \qquad\qquad & \begin{tabular}{c|c}
    $\gamma\in G$ & $h_{\gamma,\chi_1,\chi_2,k}(\mathfrak{a})$ \\
    \hline
    $\begin{psmallmatrix}
        -24 & 1 \\ -25 & 1
    \end{psmallmatrix}$ & $\dfrac{24}{5}\mathfrak{a}^2$ \\
    $\begin{psmallmatrix}
        126 & -11 \\ 275 & -24
    \end{psmallmatrix}$ & $-\dfrac{33444}{5}\mathfrak{a}^2+1164\mathfrak{a}-\dfrac{1266}{25}$ \\
    $\begin{psmallmatrix}
        126 & -13 \\ 475 & -49
    \end{psmallmatrix}$ & $\dfrac{287874}{5}\mathfrak{a}^2-11928\mathfrak{a}+618$ \\
    $\begin{psmallmatrix}
        -199 & 23 \\ -225 & 26
    \end{psmallmatrix}$ & $\dfrac{26136}{5}\mathfrak{a}^2-\dfrac{6048}{5}\mathfrak{a}+\dfrac{1752}{25}$ \\
    $\begin{psmallmatrix}
        176 & -23 \\ 375 & -49
    \end{psmallmatrix}$ & $-1620\mathfrak{a}^2+\dfrac{2184}{5}\mathfrak{a}-\dfrac{732}{25}$ \\
    $\begin{psmallmatrix}
        -74 & 11 \\ -175 & 26
    \end{psmallmatrix}$ & $-\dfrac{46332}{5}\mathfrak{a}^2+\dfrac{13704}{5}\mathfrak{a}-\dfrac{5064}{25}$ \\
    $\begin{psmallmatrix}
        101 & -16 \\ 625 & -99
    \end{psmallmatrix}$ & $-71190\mathfrak{a}^2+\dfrac{112812}{5}\mathfrak{a}-\dfrac{44688}{25}$ \\
    $\begin{psmallmatrix}
        101 & -18 \\ 275 & -49
    \end{psmallmatrix}$ & $\dfrac{125706}{5}\mathfrak{a}^2-\dfrac{44748}{5}\mathfrak{a}+\dfrac{3984}{5}$ \\
    $\begin{psmallmatrix}
        101 & -22 \\ 225 & -49
    \end{psmallmatrix}$ & $-\dfrac{35556}{5}\mathfrak{a}^2+3120\mathfrak{a}-\dfrac{8568}{25}$ \\
    $\begin{psmallmatrix}
        -124 & 29 \\ -325 & 76
    \end{psmallmatrix}$ & $\dfrac{271482}{5}\mathfrak{a}^2-\dfrac{127056}{5}\mathfrak{a}+\dfrac{74322}{25}$ \\
    $\begin{psmallmatrix}
        151 & -36 \\ 625 & -149
    \end{psmallmatrix}$ & $99060\mathfrak{a}^2-\dfrac{236112}{5}\mathfrak{a}+\dfrac{140688}{25}$ \\
    $\begin{psmallmatrix}
        -199 & 55 \\ -275 & 76
    \end{psmallmatrix}$ & $-\dfrac{56826}{5}\mathfrak{a}^2+\dfrac{31416}{5}\mathfrak{a}-\dfrac{21714}{25}$ \\
    $\begin{psmallmatrix}
        151 & -46 \\ 325 & -99
    \end{psmallmatrix}$ & $-\dfrac{58182}{5}\mathfrak{a}^2+\dfrac{35556}{5}\mathfrak{a}-\dfrac{27168}{25}$ \\
    $\begin{psmallmatrix}
        201 & -64 \\ 625 & -199
    \end{psmallmatrix}$ & $80760\mathfrak{a}^2-\dfrac{257136}{5}\mathfrak{a}+\dfrac{204672}{25}$ \\
    $\begin{psmallmatrix}
        226 & -81 \\ 625 & -224
    \end{psmallmatrix}$ & $111060\mathfrak{a}^2-\dfrac{398088}{5}\mathfrak{a}+\dfrac{356724}{25}$ \\
    $\begin{psmallmatrix}
        -99 & 43 \\ -175 & 76
    \end{psmallmatrix}$ & $\dfrac{29238}{5}\mathfrak{a}^2-\dfrac{25464}{5}\mathfrak{a}+\dfrac{27726}{25}$ \\
    $\begin{psmallmatrix}
        -274 & 131 \\ -525 & 251
    \end{psmallmatrix}$ & $\dfrac{37074}{5}\mathfrak{a}^2-7092\mathfrak{a}+\dfrac{42396}{25}$ \\
    $\begin{psmallmatrix}
        351 & -184 \\ 475 & -249
    \end{psmallmatrix}$ & $-\dfrac{234006}{5}\mathfrak{a}^2+\dfrac{245328}{5}\mathfrak{a}-\dfrac{321498}{25}$
\end{tabular}
\end{tabular}
\end{center}

\begin{center}
\renewcommand{\arraystretch}{2.1}
\begin{tabular}{c c c}
\begin{tabular}{c|c}
    $\gamma\in G$ & $h_{\gamma,\chi_1,\chi_2,k}(\mathfrak{a})$ \\
    \hline
    $\begin{psmallmatrix}
        351 & -185 \\ 425 & -224
    \end{psmallmatrix}$ & $\dfrac{259362}{5}\mathfrak{a}^2-\dfrac{273336}{5}\mathfrak{a}+\dfrac{360078}{25}$ \\
    $\begin{psmallmatrix}
        226 & -121 \\ 325 & -174
    \end{psmallmatrix}$ & $-\dfrac{151482}{5}\mathfrak{a}^2+\dfrac{162312}{5}\mathfrak{a}-\dfrac{217398}{25}$ \\
    $\begin{psmallmatrix}
        176 & -97 \\ 225 & -124
    \end{psmallmatrix}$ & $-\dfrac{130956}{5}\mathfrak{a}^2+\dfrac{144456}{5}\mathfrak{a}-\dfrac{199188}{25}$ \\
    $\begin{psmallmatrix}
        76 & -45 \\ 125 & -74
    \end{psmallmatrix}$ & $-17244\mathfrak{a}^2+\dfrac{101436}{5}\mathfrak{a}-\dfrac{29838}{5}$ \\
    $\begin{psmallmatrix}
        226 & -143 \\ 275 & -174
    \end{psmallmatrix}$ & $\dfrac{130806}{5}\mathfrak{a}^2-33096\mathfrak{a}+\dfrac{52338}{5}$ \\
    $\begin{psmallmatrix}
        251 & -173 \\ 325 & -224
    \end{psmallmatrix}$ & $-\dfrac{183162}{5}\mathfrak{a}^2+\dfrac{252552}{5}\mathfrak{a}-\dfrac{435282}{25}$ \\
    $\begin{psmallmatrix}
        -249 & 182 \\ -275 & 201
    \end{psmallmatrix}$ & $-\dfrac{18456}{5}\mathfrak{a}^2+\dfrac{26904}{5}\mathfrak{a}-\dfrac{9804}{5}$ \\
    $\begin{psmallmatrix}
        101 & -80 \\ 125 & -99
    \end{psmallmatrix}$ & $15606\mathfrak{a}^2-\dfrac{122952}{5}\mathfrak{a}+\dfrac{48438}{5}$
\end{tabular} & \qquad\qquad &
\begin{tabular}{c|c}
    $\gamma\in G$ & $h_{\gamma,\chi_1,\chi_2,k}(\mathfrak{a})$ \\
    \hline
    $\begin{psmallmatrix}
        326 & -173 \\ 375 & -199
    \end{psmallmatrix}$ & $19080\mathfrak{a}^2-\dfrac{101184}{5}\mathfrak{a}+\dfrac{134148}{25}$ \\
    $\begin{psmallmatrix}
        251 & -136 \\ 275 & -149
    \end{psmallmatrix}$ & $\dfrac{6}{5}\mathfrak{a}^2-\dfrac{24}{5}\mathfrak{a}+\dfrac{54}{25}$ \\
    $\begin{psmallmatrix}
        -149 & 86 \\ -175 & 101
    \end{psmallmatrix}$ & $\dfrac{15768}{5}\mathfrak{a}^2-\dfrac{18264}{5}\mathfrak{a}+\dfrac{26436}{25}$ \\
    $\begin{psmallmatrix}
        -249 & 154 \\ -325 & 201
    \end{psmallmatrix}$ & $-\dfrac{150918}{5}\mathfrak{a}^2+\dfrac{186588}{5}\mathfrak{a}-\dfrac{288348}{25}$ \\
    $\begin{psmallmatrix}
        -424 & 291 \\ -475 & 326
    \end{psmallmatrix}$ & $\dfrac{145056}{5}\mathfrak{a}^2-39828\mathfrak{a}+\dfrac{341742}{25}$ \\
    $\begin{psmallmatrix}
        151 & -107 \\ 175 & -124
    \end{psmallmatrix}$ & $\dfrac{12012}{5}\mathfrak{a}^2-3396\mathfrak{a}+\dfrac{30006}{25}$ \\
    $\begin{psmallmatrix}
        426 & -313 \\ 475 & -349
    \end{psmallmatrix}$ & $\dfrac{245724}{5}\mathfrak{a}^2-\dfrac{361332}{5}\mathfrak{a}+\dfrac{132834}{5}$ \\
    --- & ---
\end{tabular}
\end{tabular}

% \begin{tabular}{c c c}
% \begin{tabular}{c|c}
%     $\gamma\in G$ & $h_{\gamma,\chi_1,\chi_2,k}(\mathfrak{a})$ \\
%     \hline
    
% \end{tabular} & \qquad\qquad &
% \begin{tabular}{c|c}
%     $\gamma\in G$ & $h_{\gamma,\chi_1,\chi_2,k}(\mathfrak{a})$ \\
%     \hline
    
% \end{tabular}
% \end{tabular}
\end{center}

\noindent Noting that every polynomial in the table is a member of $\mathcal{P}(4;6,5)$, we have that $\widetilde{S}_{\chi_1,\chi_2,4}(\Gamma_1(q_1q_2))\subseteq 6\mathbb{Z}/5$  by Theorem \ref{containmentTheorem}. Using this technique we are able to prove the desired result for all pairs of characters listed in the tables of Section \ref{computation}. To verify this result for every entry in the table, the corresponding polynomials can be found in the Github in Section \ref{codeSec} at the end of this paper.

\section{Reciprocity Identities from the Fricke Involution}\label{frickeSec}
Recall the definition of $\omega$ in Definition \ref{frickeDef}. We note the action of $\omega$ on $E_{\chi_1,\chi_2,k}$ as we will use this to extend the domains of $\widehat{S}_{\chi_1,\chi_2,k}$, $h_{\gamma,\chi_1,\chi_2,k}$, and $\phi_{\chi_1,\chi_2,k}$.
\begin{lemma}\label{weisinger}[\cite{Weisinger}, Proposition $1$]
    The weight $k$ slash operator of Fricke applied to $E_{\chi_1,\chi_2,k}$ obeys the rule
    $$E_{\chi_1,\chi_2,k}|_k\omega(z)=R_{\chi_1,\chi_2,k}\,E_{\chi_2,\chi_1,k}(z)$$
    where $R_{\chi_1,\chi_2,k}=\chi_1(-1)(\tau(\chi_1)/\tau(\chi_2))(q_2/q_1)^{k/2}$.
\end{lemma}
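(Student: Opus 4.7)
The plan is to compute $E_{\chi_1,\chi_2,k}|_k\omega(z)$ directly from Definition \ref{youngEisensteinDef} for $k\geq 3$, where absolute convergence lets us reindex the double sum, and then to handle the $k=2$ case separately via the Fourier expansion of Lemma \ref{eisensteinSeries}.

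For $k\geq 3$, first note that $\omega z=-1/(q_1q_2 z)$ and $j(\omega,z)=\sqrt{q_1q_2}\,z$, so
$$E_{\chi_1,\chi_2,k}|_k\omega(z)=(\sqrt{q_1q_2}\,z)^{-k}\,E_{\chi_1,\chi_2,k}\!\left(-\tfrac{1}{q_1q_2z}\right).$$
Plugging $-1/(q_1q_2z)$ into the series and pulling out $(q_1z)^k$ from each denominator converts $cq_2(-1/(q_1q_2z))+d$ into $(dq_1z-c)/(q_1z)$. Thus
$$E_{\chi_1,\chi_2,k}\!\left(-\tfrac{1}{q_1q_2z}\right)=\frac{(q_2/(2\pi))^k\,L(k,\chi_1\chi_2)\,\Gamma(k)}{i^{-k}\tau(\chi_2)}\,(q_1z)^k\sum_{(c,d)=1}\frac{\chi_1(c)\chi_2(d)}{(dq_1z-c)^k}.$$

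Next I would apply the change of summation variables $(c,d)\mapsto(c',d'):=(d,-c)$, which is a bijection on coprime pairs. Under this map $\chi_1(c)=\chi_1(-d')=\chi_1(-1)\chi_1(d')$ and $\chi_2(d)=\chi_2(c')$, while $dq_1z-c=c'q_1z+d'$, so the inner sum becomes $\chi_1(-1)\sum_{(c',d')=1}\chi_2(c')\chi_1(d')/(c'q_1z+d')^k$, which is exactly the defining sum of $E_{\chi_2,\chi_1,k}(z)$ (with the roles of $\chi_1,\chi_2$ and $q_1,q_2$ swapped). Comparing normalizations, and using the symmetry $L(k,\chi_1\chi_2)=L(k,\chi_2\chi_1)$, the ratio of prefactors collapses to $\chi_1(-1)(q_2/q_1)^k(\tau(\chi_1)/\tau(\chi_2))$. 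Multiplying by the $(\sqrt{q_1q_2}\,z)^{-k}(q_1z)^k$ from the slash operator, the $z^k$ factors cancel and the powers of $q_1$ and $q_2$ combine into $(q_2/q_1)^{k/2}$, yielding precisely $R_{\chi_1,\chi_2,k}\,E_{\chi_2,\chi_1,k}(z)$.

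The only real subtlety is the $k=2$ case, where the defining series does not converge absolutely and the reindexing above is not immediately justified. Here I would appeal to Definition \ref{youngEisensteinDef}, which specifies that the $k=2$ Eisenstein series is defined via its Fourier expansion (Lemma \ref{eisensteinSeries}), and proceed by analytic continuation in $k$: the identity of holomorphic functions just proved extends by the Hecke trick (inserting $|cq_2z+d|^{-2s}$, establishing the identity for $\operatorname{Re}(s)$ large, and analytically continuing to $s=0$), a standard argument for Eisenstein series attached to primitive characters, where the absence of a pole at $k=2$ is guaranteed by the non-triviality of $\chi_1\chi_2$. The main obstacle is therefore purely bookkeeping—tracking the powers of $q_1$, $q_2$, and $z$, together with the $\chi_1(-1)$ sign, through the normalization—rather than anything deeper; the structural content is the bijection $(c,d)\mapsto(d,-c)$ that swaps the two characters.
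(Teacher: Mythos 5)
Your argument is correct, but it is a genuinely different route from the paper: the paper does not prove this lemma at all, it simply cites Weisinger's Proposition 1 and records (in the remark that follows) the dictionary $E_{\chi_1,\chi_2,k}=2F_{\chi_1,\overline{\chi_2},k}$ needed to translate his normalization into the one of Definition \ref{youngEisensteinDef}. Your direct computation --- unfolding $E_{\chi_1,\chi_2,k}|_k\omega$ from the defining coprime-pair sum, using $\omega z=-1/(q_1q_2z)$, $j(\omega,z)=\sqrt{q_1q_2}\,z$, and the reindexing $(c,d)\mapsto(d,-c)$ to swap the characters --- is sound for $k\geq 3$, and your bookkeeping of the prefactors does collapse to $R_{\chi_1,\chi_2,k}=\chi_1(-1)(\tau(\chi_1)/\tau(\chi_2))(q_2/q_1)^{k/2}$; this has the advantage of being self-contained and of making the constant transparent, whereas the citation route outsources the computation at the cost of a normalization check that is easy to get wrong (hence the paper's cautionary remark). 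The one place where your write-up is thinner than a complete proof is $k=2$: since the paper \emph{defines} the weight-$2$ series by the Fourier expansion of Lemma \ref{eisensteinSeries}, the Hecke-trick continuation you invoke must be accompanied by the (standard, but not automatic) facts that the regularized series $\sum\chi_1(c)\chi_2(d)\abs{cq_2z+d}^{-2s}(cq_2z+d)^{-2}$ is entire in $s$ for non-trivial primitive characters, that its value at $s=0$ is holomorphic with no $1/y$ correction term, and that this value has exactly the $q$-expansion of Lemma \ref{eisensteinSeries}; alternatively one can verify the Fricke relation at $k=2$ directly on the Fourier expansion, which is essentially what the cited literature does. With that caveat addressed, your proof is complete and independent of Weisinger.
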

\begin{remark}
    Weisinger defines his $E_{\chi_1,\chi_2,k}$ differently, so care is required in going between his formula and how it is written here. Specifically, he uses the notation $F_{\chi_1,\chi_2,k}$ and this relates to our $E_{\chi_1,\chi_2,k}$ as $E_{\chi_1,\chi_2,k}=2F_{\chi_1,\overline{\chi_2},k}$.
\end{remark}
% \begin{proof}
%     By definition we have that
%     $$E_{\chi_1,\chi_2,k}(z)=\frac{(q_2/(2\pi))^kL(k,\chi_1\chi_2)(k-1)!}{2i^{-k}\tau(\chi_2)}\sum_{(c,d)=1}\frac{\chi_1(c)\chi_2(d)}{(cq_2z+d)^k}.$$
%     Now note that
%     $$E_{\chi_1,\chi_2,k}(\omega z)=\frac{(q_2/(2\pi))^kL(k,\chi_1\chi_2)(k-1)!}{2i^{-k}\tau(\chi_2)}\sum_{(c,d)=1}\frac{\chi_1(c)\chi_2(d)}{(d-c/(q_1z))^k}.$$
%     Multiplying through the top and bottom by $(q_1z)^k$ we have
%     $$E_{\chi_1,\chi_2,k}(\omega z)=(q_2z)^k\,\frac{\tau(\chi_1)}{\tau(\chi_2)}\left(\frac{(q_1/(2\pi))^kL(k,\chi_1\chi_2)(k-1)!}{2i^{-k}\tau(\chi_1)}\right)\sum_{(c,d)=1}\frac{\chi_2(d)\chi_1(c)}{(dq_1z-c)^k}.$$
%     Multiplying by $1=\chi_1(-1)\chi_1(-1)$ and re-indexing the sum we have that
%     \begin{align*}
%         E_{\chi_1,\chi_2,k}(\omega z)&=\chi_1(-1)(q_2z)^k\,\ frac{\tau(\chi_1)}{\tau(\chi_2)}\left(\frac{(q_1/(2\pi))^kL(k,\chi_1\chi_2)(k-1)!}{2i^{-k}\tau(\chi_1)}\right)\sum_{(c,d)=1}\frac{\chi_2(d)\chi_1(c)}{(dq_1z+c)^k} \\
%         &=\chi_1(-1)(q_2z)^k\,\frac{\tau(\chi_1)}{\tau(\chi_2)}\,E_{\chi_2,\chi_1,k}(z).
%     \end{align*}
%     Thus we have that
%     $$E_{\chi_1,\chi_2,k}|_k\omega(z)=(z\sqrt{q_1q_2})^{-k}E_{\chi_1,\chi_2,k}(\omega z)=\chi_1(-1)\,\frac{\tau(\chi_1)}{\tau(\chi_2)}\left(\frac{q_2}{q_1}\right)^{k/2}E_{\chi_2,\chi_1,k}(z).$$
% \end{proof}

Towards the goal of constructing some reciprocity properties of our own, we define extensions of $\phi_{\chi_1,\chi_2,k}(\omega\gamma,X,Y)$, $\widehat{S}_{\chi_1,\chi_2,k}$, and $h_{\gamma,\chi_1,\chi_2,k}$ which are compatible with $\omega$ on the domain $(\Gamma_0(q_1q_2))(\infty)\cup(\omega\Gamma_0(q_1q_2))(\infty)$. To this end, we choose the following natural definitions.
\begin{definition}
    Let
    $$\phi_{\chi_1,\chi_2,k}(\omega\gamma,X,Y)=\int_\infty^{\omega\gamma\infty}E_{\chi_1,\chi_2,k}(z)\,P_{k-2}(z;X,Y)\,dz$$
    where $\gamma\in\Gamma_0(q_1q_2)$. Now we define
    $$\widehat{S}_{\chi_1,\chi_2,k}(\mathfrak{a})=(-1)^k\,\tau(\overline{\chi_1})(k-1)\int_\infty^{\mathfrak{a}}E_{\chi_1,\chi_2,k}(z)\,P_{k-2}(z;1,-\mathfrak{a})\,dz$$
    where $\mathfrak{a}\in(\Gamma_0(q_1q_2))(\infty)\cup(\omega\Gamma_0(q_1q_2))(\infty)$. As before, for $\gamma\in\Gamma_0(q_1q_2)$ we define
    $$h_{\gamma,\chi_1,\chi_2,k}(\mathfrak{a})=\widehat{S}_{\chi_1,\chi_2,k}(\mathfrak{a})-\widehat{S}_{\chi_1,\chi_2,k}|_{2-k}\gamma(\mathfrak{a})\qquad\text{and}\qquad h_{\omega\gamma,\chi_1,\chi_2,k}(\mathfrak{a})=\widehat{S}_{\chi_1,\chi_2,k}(\mathfrak{a})-\widehat{S}_{\chi_1,\chi_2,k}|_{2-k}\omega\gamma(\mathfrak{a})$$
    for $\mathfrak{a}\in((\Gamma_0(q_1q_2))(\infty)\cup(\omega\Gamma_0(q_1q_2))(\infty))\setminus\{\infty\}$.
\end{definition}
\noindent Note that $\phi_{\chi_1,\chi_2,k}(\omega\gamma,X,Y)$ converges for all finite $X$ and $Y$. This is because the endpoints $\infty$ and $\omega\gamma\infty$ are $\Gamma_0(q_1q_2)$ equivalent to $\infty$ and $0$ respectively, and $E_{\chi_1,\chi_2,k}$ has exponential decay at both of these points. We immediately get that $\widehat{S}_{\chi_1,\chi_2,k}(\mathfrak{a})$ converges for all $\mathfrak{a}\in((\Gamma_0(q_1q_2))(\infty)\cup(\omega\Gamma_0(q_1q_2))(\infty))\setminus\{\infty\}$; however, we already know that $\widehat{S}_{\chi_1,\chi_2,k}(\infty)=0$. So $\widehat{S}_{\chi_1,\chi_2,k}$ converges on all of $(\Gamma_0(q_1q_2))(\infty)\cup(\omega\Gamma_0(q_1q_2))(\infty)$ and it follows that $h_{\gamma,\chi_1,\chi_2,k}$ and $h_{\omega\gamma,\chi_1,\chi_2,k}$ converge on all of $((\Gamma_0(q_1q_2))(\infty)\cup(\omega\Gamma_0(q_1q_2))(\infty))\setminus\{\infty\}$. 

We now explicitly compute the value of $\widehat{S}_{\chi_1,\chi_2,k}(0)$, as it relates to special values of $L$-functions and will prove useful in developing our reciprocity identities.
\begin{lemma}\label{Sfricke}
    We have the equality
    $$\widehat{S}_{\chi_1,\chi_2,k}(0)=\frac{\tau(\overline{\chi_1})(k-1)!}{\pi i\,(2\pi i)^{k-2}}\,L(k-1,\chi_1)\,L(0,\overline{\chi_2}).$$
\end{lemma}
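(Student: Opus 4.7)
The plan is to compute $\widehat{S}_{\chi_1,\chi_2,k}(0)$ by substituting the Fourier expansion of $E_{\chi_1,\chi_2,k}$ and recognizing the resulting double Dirichlet series as the claimed product of $L$-functions, with care taken to justify the term-by-term calculation at $s=k-1$ via a Mellin transform / analytic continuation argument.

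First, I would specialize the definition of $\widehat{S}_{\chi_1,\chi_2,k}$ at $\mathfrak{a}=0$, noting that $P_{k-2}(z;1,0) = z^{k-2}$, and parameterize the contour along the positive imaginary axis by $z=it$ with $t$ running from $\infty$ to $0$. A short bookkeeping computation shows
$$\widehat{S}_{\chi_1,\chi_2,k}(0) = -(-1)^k i^{k-1}\tau(\overline{\chi_1})(k-1) \int_0^\infty E_{\chi_1,\chi_2,k}(it)\, t^{k-2}\,dt.$$
Thus it suffices to evaluate the Mellin-type integral on the right at the exponent $s=k-1$.

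Next, I would consider the one-parameter family $M(s):=\int_0^\infty E_{\chi_1,\chi_2,k}(it)\,t^{s-1}\,dt$. Substituting the Fourier expansion of Lemma \ref{eisensteinSeries} (with $N=AB$) and swapping sum and integral, a standard gamma integral computation gives
$$M(s) = \frac{2\,\Gamma(s)}{(2\pi)^s}\,L(s,\chi_1)\,L(s-k+1,\overline{\chi_2})$$
valid for $\mathfrak{Re}(s)>k$ where the double sum $\sum_{A,B}\chi_1(A)\overline{\chi_2}(B)B^{k-1}(AB)^{-s}$ converges absolutely. The point $s=k-1$ lies outside this region, so the identity cannot be applied directly; this is the main obstacle. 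To overcome it, I would argue that both sides extend to entire functions of $s$ and hence agree everywhere. The right-hand side is entire because $\chi_1,\overline{\chi_2}$ are primitive nontrivial so both $L$-functions are entire, and at $s=k-1\geq 1$ the factor $\Gamma(s)$ contributes no pole. For the left-hand side, the Fricke transformation of Lemma \ref{weisinger} identifies $E_{\chi_1,\chi_2,k}$ near the cusp $0$ with (a scalar multiple of) $E_{\chi_2,\chi_1,k}$ near $\infty$; since neither Eisenstein series has a constant term in its Fourier expansion, $E_{\chi_1,\chi_2,k}(it)$ decays exponentially at both $t\to 0^+$ and $t\to\infty$, making $M(s)$ entire. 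By analytic continuation the identity holds for all $s$, and in particular at $s=k-1$ one obtains
$$M(k-1) = \frac{2\,(k-2)!}{(2\pi)^{k-1}}\,L(k-1,\chi_1)\,L(0,\overline{\chi_2}).$$

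Finally I would assemble the constants. Combining the prefactor with the integral computation, the total coefficient is
$$-(-1)^k i^{k-1}(k-1)\cdot \frac{2(k-2)!}{(2\pi)^{k-1}} = \frac{(-1)^{k+1} i^{k-1}\cdot 2(k-1)!}{(2\pi)^{k-1}}.$$
Using $(2\pi)^{k-1} = (2\pi i)^{k-1}/i^{k-1}$ and the cancellation $(-1)^{k+1}\cdot i^{2(k-1)} = (-1)^{k+1}(-1)^{k-1} = 1$, this reduces to $2/(2\pi i)^{k-1} = 1/(\pi i\,(2\pi i)^{k-2})$, yielding the claimed formula. The hard part of the argument is entirely the analytic continuation step: the naive term-by-term evaluation encounters the divergent series $\sum_B \overline{\chi_2}(B)$ at $s=k-1$, and the Fricke involution is what saves the day by producing exponential decay at the cusp $0$ and hence entirety of $M(s)$.
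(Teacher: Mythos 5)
Your proposal is correct, but it takes a genuinely different route from the paper. The paper evaluates $\widehat{S}_{\chi_1,\chi_2,k}(0)$ by rerunning the machinery of its Section \ref{analysisSection} with the cusp $a/c$ replaced by $0$: it proves analogues of Lemma \ref{outerSumLimit}, Corollary \ref{outerSumLimitCor}, and Lemma \ref{limitInterchange} for the regularized sums with $e^{-2\pi ABu}$, takes the limit $u\to 0^+$ to evaluate the conditionally convergent inner sum in terms of $B_{1,\chi_2}(0)$, and then must invoke the functional equation of $L(s,\chi_2)$ to convert $\tau(\overline{\chi_2})L(1,\chi_2)/(\pi i)$ (respectively $0$ in the even case) into the clean factor $L(0,\overline{\chi_2})$. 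You instead restrict the period integral to the imaginary axis, introduce the Mellin family $M(s)=\int_0^\infty E_{\chi_1,\chi_2,k}(it)\,t^{s-1}\,dt$, unfold it to $\tfrac{2\Gamma(s)}{(2\pi)^s}L(s,\chi_1)L(s-k+1,\overline{\chi_2})$ in the half-plane $\mathfrak{Re}(s)>k$ of absolute convergence, and reach $s=k-1$ by analytic continuation, the key input being exponential decay of $E_{\chi_1,\chi_2,k}(it)$ at both endpoints, with the decay at $t\to 0^+$ supplied by the Fricke relation of Lemma \ref{weisinger}. Your route buys a direct appearance of $L(0,\overline{\chi_2})$ with no appeal to the functional equation of Dirichlet $L$-functions and no delicate limit interchanges, and it treats the even and odd $\chi_2$ cases uniformly; the paper's route buys economy within its own framework, reusing lemmas already established for Theorem \ref{mainThm}. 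One small imprecision on your side: the right-hand side of your identity for $M(s)$ is not obviously entire (the poles of $\Gamma(s)$ at nonpositive integers need the trivial zeros of the $L$-factors to cancel them), but this is harmless, since both sides are visibly analytic on the connected region $\mathfrak{Re}(s)>0$, which contains the half-plane of agreement and the point $s=k-1\geq 1$, and your constant bookkeeping checks out against the stated formula.
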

\begin{proof}
    We do a similar analysis as in Section \ref{analysisSection}. We deduce an analogue of Lemma \ref{outerSumLimit}. For $k\geq 3$ and $0\leq n<k-2$
    $$\abs{\sum_{1\leq A}\frac{\chi_1(A)}{A^{k-1}}\sum_{1\leq B}\overline{\chi_2}(B)B^{k-n-2}e^{-2\pi ABu}}\ll_{k,\,n,\,q_2}\frac{1}{\sqrt{u}}.$$
    This, of course, allows us to deduce an analogue of Corollary \ref{outerSumLimitCor}. With the same restrictions we have
    $$\lim_{u\to 0^+}u^{k-n-2}\sum_{1\leq A}\frac{\chi_1(A)}{A^{k-1}}\sum_{1\leq B}\overline{\chi_2}(B)B^{k-n-2}e^{-2\pi ABu}=0.$$
    Similarly, we can prove an analogue of Lemma \ref{limitInterchange}
    $$\lim_{u\to 0^+}\sum_{1\leq A}\frac{\chi_1(A)}{A^{k-1}}\sum_{1\leq B}\overline{\chi_2}(B)\,e^{-2\pi ABu}=-\sum_{1\leq A}\sum_{j\bmod q_2}\frac{\chi_1(A)\overline{\chi_2}(j)}{A^{k-1}}\,B_1\left(\frac{j}{q_2}\right)=-L(k-1,\chi_1)\,B_{1,\chi_2}(0).$$
    But by Definition \ref{berndt1}
    $$-B_{1,\chi_2}(0)=\frac{\tau(\overline{\chi_2})}{2\pi i}\sum_{n\neq 0}\frac{\chi_2(n)}{n}=\begin{cases}
        \dfrac{\tau(\overline{\chi_2})}{\pi i}\,L(1,\chi_2) & \chi_2(-1)=-1 \\
        0 & \chi_2(-1)=+1.
    \end{cases}$$
    However, recalling the functional equation of $L(s,\chi)$, we have that
    $$\frac{\pi i^\delta\,L(0,\overline{\chi_2})}{\tau(\overline{\chi_2})}=\sqrt{\pi}\left(\frac{\Gamma((1+\delta)/2)}{\Gamma(\delta/2)}\right)\,L(1,\chi_2)\qquad\text{where}\qquad\sqrt{\pi}\left(\frac{\Gamma((1+\delta)/2)}{\Gamma(\delta/2)}\right)=\begin{cases}
        1 & \delta=0 \\
        0 & \delta=1
    \end{cases}$$
    where $\chi_2(-1)=(-1)^\delta$ with $\delta=0$ or $1$. Thus
    $$\lim_{u\to 0^+}\sum_{1\leq A}\frac{\chi_1(A)}{A^{k-1}}\sum_{ 1\leq B}\overline{\chi_2}(B)\,e^{-2\pi ABu}=L(k-1,\chi_1)\,L(0,\overline{\chi_2}).$$
    Now if we use the analogue of Corollary \ref{outerSumLimitCor}, by following the proof of Theorem \ref{mainThm} in a similar manner we arrive at
    \begin{align*}
        \widehat{S}_{\chi_1,\chi_2,k}(0)=\frac{\tau(\overline{\chi_1})\,(k-1)!}{\pi i}\left(\frac{1}{2\pi i}\right)^{k-2}\lim_{u\to 0^+}\sum_{1\leq A}\frac{\chi_1(A)}{A^{k-1}}\sum_{1\leq B}\overline{\chi_2}(B)\,e^{-2\pi ABu}.
    \end{align*}
    We computed this limit above, and substituing this the desired result follows.
\end{proof}

\noindent Now we relate $\widehat{S}_{\chi_1,\chi_2,k}(\mathfrak{a})$ and $h_{\omega\gamma,\chi_1,\chi_2,k}(\mathfrak{a})$ with $\phi_{\chi_2,\chi_1,k}((\omega\gamma)^{-1},1,-\mathfrak{a})$ in a manner analogous to Lemma \ref{qmfLemma}.
\begin{lemma}\label{qmfLemma2}
    For $\gamma\in\Gamma_0(q_1q_2)$ and $\mathfrak{a}\in((\Gamma_0(q_1q_2))(\infty)\cup(\omega\Gamma_0(q_1q_2))(\infty))\setminus\{\infty\}$ we have
    \begin{align*}\label{equqmfLemma2}
        h_{\omega\gamma,\chi_1,\chi_2,k}(\mathfrak{a})&=(-1)^k\tau(\overline{\chi_1})(k-1)\,R_{\chi_1,\chi_2,k}(\gamma)\,\phi_{\chi_2,\chi_1,k}((\omega\gamma)^{-1},1,-\mathfrak{a}) \\
        &\qquad-R_{\chi_1,\chi_2,k}(\gamma)\,(\tau(\overline{\chi_1})/\tau(\overline{\chi_2}))\,\widehat{S}_{\chi_2,\chi_1,k}(\mathfrak{a})+\widehat{S}_{\chi_1,\chi_2,k}(\mathfrak{a}) \numberthis
    \end{align*}
    where $R_{\chi_1,\chi_2,k}(\gamma)=\overline{\psi}(\gamma)\,R_{\chi_1,\chi_2,k}$.
\end{lemma}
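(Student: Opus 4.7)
The plan is to follow the same template as the proof of Lemma \ref{qmfLemma}, with $\omega\gamma$ in place of $\gamma$ and the Fricke identity of Lemma \ref{weisinger} replacing the modular transformation of $E_{\chi_1,\chi_2,k}$ under $\Gamma_0(q_1q_2)$. The extended definitions in Section \ref{frickeSec} are set up precisely so that the mechanics of the Lemma \ref{qmfLemma} proof carry over, with the Fricke involution causing $E_{\chi_1,\chi_2,k}$ to be replaced by $E_{\chi_2,\chi_1,k}$ (up to a scalar) at the key step.

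In more detail, I would first unfold $h_{\omega\gamma,\chi_1,\chi_2,k}(\mathfrak{a}) = \widehat{S}_{\chi_1,\chi_2,k}(\mathfrak{a}) - \widehat{S}_{\chi_1,\chi_2,k}|_{2-k}\omega\gamma(\mathfrak{a})$ using Definition \ref{shatDef} and the slash operator to obtain a difference of period integrals, one from $\infty$ to $\mathfrak{a}$ and one from $\infty$ to $\omega\gamma\mathfrak{a}$ scaled by $j(\omega\gamma,\mathfrak{a})^{k-2}$. Applying Lemma \ref{integralCOV2} with $\gamma$ replaced by $\omega\gamma$ and $\mathfrak{b}=\infty$ converts the second integral into one from $(\omega\gamma)^{-1}\infty$ to $\mathfrak{a}$ against $P_{k-2}(z;1,-\mathfrak{a})$, now with $E_{\chi_1,\chi_2,k}|_k(\omega\gamma)$ in the integrand. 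The key simplification is then
$$E_{\chi_1,\chi_2,k}|_k(\omega\gamma) = \bigl(E_{\chi_1,\chi_2,k}|_k\omega\bigr)|_k\gamma = R_{\chi_1,\chi_2,k}\,E_{\chi_2,\chi_1,k}|_k\gamma = R_{\chi_1,\chi_2,k}\,\overline{\psi}(\gamma)\,E_{\chi_2,\chi_1,k} = R_{\chi_1,\chi_2,k}(\gamma)\,E_{\chi_2,\chi_1,k},$$
combining Lemma \ref{weisinger} with the observation from Definition \ref{youngEisensteinDef} that $E_{\chi_2,\chi_1,k}$ has central character $\chi_2\overline{\chi_1}=\overline{\psi}$. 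Splitting the resulting integral as $\int_{(\omega\gamma)^{-1}\infty}^{\mathfrak{a}} = -\int_\infty^{(\omega\gamma)^{-1}\infty} + \int_\infty^\mathfrak{a}$ and identifying the two pieces against $E_{\chi_2,\chi_1,k}$ with $\phi_{\chi_2,\chi_1,k}((\omega\gamma)^{-1},1,-\mathfrak{a})$ and $\widehat{S}_{\chi_2,\chi_1,k}(\mathfrak{a})/((-1)^k\tau(\overline{\chi_2})(k-1))$ respectively, then rescaling by $(-1)^k\tau(\overline{\chi_1})(k-1)$ and collecting terms, should reproduce (\ref{equqmfLemma2}) directly.

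The main item to watch is the bookkeeping around the extended domain: I need to verify that $\phi_{\chi_2,\chi_1,k}((\omega\gamma)^{-1},1,-\mathfrak{a})$ falls under the extended definition from the start of Section \ref{frickeSec}, which amounts to showing $(\omega\gamma)^{-1} = \omega\gamma''$ for some $\gamma''\in\Gamma_0(q_1q_2)$. A short computation (using $\omega^2 = -I$) gives $\gamma'' = (\omega\gamma\omega)^{-1}$, and since the Fricke conjugate $\omega\gamma\omega$ lies in $\Gamma_0(q_1q_2)$ this is fine. Beyond this, there is no substantive obstacle: once the Fricke composition $E_{\chi_1,\chi_2,k}|_k(\omega\gamma)$ is computed correctly and convergence at the cusp $(\omega\gamma)^{-1}\infty$ (which is $\Gamma_0(q_1q_2)$-equivalent to $0$) is acknowledged, the remainder is routine symbol-pushing parallel to the proof of Lemma \ref{qmfLemma}.
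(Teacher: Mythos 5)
Your proposal is correct and follows essentially the same route as the paper's proof: unfold $h_{\omega\gamma,\chi_1,\chi_2,k}$, change variables via Lemma \ref{integralCOV2}, invoke Lemma \ref{weisinger} together with the central character $\overline{\psi}$ of $E_{\chi_2,\chi_1,k}$, split the path through $\infty$, and rescale. The only (cosmetic) difference is that you apply Lemma \ref{integralCOV2} once with the single matrix $\omega\gamma$, whereas the paper applies it twice, first with $\omega$ and then with $\gamma$; your explicit check that $(\omega\gamma)^{-1}=\omega\gamma''$ with $\gamma''\in\Gamma_0(q_1q_2)$ is a welcome piece of bookkeeping the paper leaves implicit.
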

\begin{proof}
    Note that
    \begin{equation}\label{equ8}
        \frac{h_{\omega\gamma,\chi_1,\chi_2,k}(\mathfrak{a})}{(-1)^k\tau(\overline{\chi_1})(k-1)}=\int_\infty^\mathfrak{a}E_{\chi_1,\chi_2,k}(z)\,P_{k-2}(z;1,-\mathfrak{a})\,dz-j(\omega\gamma,\mathfrak{a})^{k-2}\int_\infty^{\omega\gamma\mathfrak{a}}E_{\chi_1,\chi_2,k}(z)\,P_{k-2}(z;1,-\omega\gamma\mathfrak{a})\,dz.
    \end{equation}
    Applying Lemma \ref{integralCOV2} and Lemma \ref{weisinger} we have
    \begin{align*}
        \int_\infty^{\omega\gamma\mathfrak{a}}E_{\chi_1,\chi_2,k}(z)\,P_{k-2}(z;1,-\omega\gamma\mathfrak{a})\,dz&=\frac{1}{j(\omega,\gamma\mathfrak{a})^{k-2}}\int_{\omega^{-1}\infty}^{\gamma\mathfrak{a}}E_{\chi_1,\chi_2,k}|_k\omega(z)\,P_{k-2}(z;1,-\gamma\mathfrak{a})\,dz \\
        &=\frac{R_{\chi_1,\chi_2,k}}{j(\omega,\gamma\mathfrak{a})^{k-2}}\int_{\omega^{-1}\infty}^{\gamma\mathfrak{a}}E_{\chi_2,\chi_1,k}(z)\,P_{k-2}(z;1,-\gamma\mathfrak{a})\,dz. \numberthis \label{equ9}
    \end{align*}
    Applying Lemma \ref{integralCOV2} again and recalling $E_{\chi_1,\chi_2,k}|_k\gamma(z)=\psi(\gamma)\,E_{\chi_1,\chi_2,k}(z)$, we have
    \begin{align*}
        \int_{\omega^{-1}\infty}^{\gamma\mathfrak{a}}E_{\chi_2,\chi_1,k}(z)\,P_{k-2}(z;1,-\gamma\mathfrak{a})\,dz&=\frac{1}{j(\gamma,\mathfrak{a})^{k-2}}\int_{(\omega\gamma)^{-1}\infty}^\mathfrak{a}E_{\chi_2,\chi_1,k}|_k\gamma(z)\,P_{k-2}(z;1,-\mathfrak{a})\,dz \\
        &=\frac{\overline{\psi}(\gamma)}{j(\gamma,\mathfrak{a})^{k-2}}\int_{(\omega\gamma)^{-1}\infty}^\mathfrak{a}E_{\chi_2,\chi_1,k}(z)\,P_{k-2}(z;1,-\mathfrak{a})\,dz. \numberthis \label{equ10}
    \end{align*}
    Putting together (\ref{equ9}) and (\ref{equ10}) we have that
    $$\int_\infty^{\omega\gamma\mathfrak{a}}E_{\chi_1,\chi_2,k}(z)\,P_{k-2}(z;1,-\omega\gamma\mathfrak{a})\,dz=\frac{R_{\chi_1,\chi_2,k}(\gamma)}{j(\omega\gamma,\mathfrak{a})^{k-2}}\int_{(\omega\gamma)^{-1}\infty}^\mathfrak{a}E_{\chi_2,\chi_1,k}(z)\,P_{k-2}(z;1,-\mathfrak{a})\,dz.$$
    Now substituting this into (\ref{equ8}) we get that
    \begin{align*}
        \frac{h_{\omega\gamma,\chi_1,\chi_2,k}(\mathfrak{a})}{(-1)^k\tau(\overline{\chi_1})(k-1)}&=\int_\infty^\mathfrak{a}E_{\chi_1,\chi_2,k}(z)\,P_{k-2}(z;1,-\mathfrak{a})\,dz+R_{\chi_1,\chi_2,k}(\gamma)\int_\mathfrak{a}^{(\omega\gamma)^{-1}\infty}E_{\chi_2,\chi_1,k}(z)\,P_{k-2}(z;1,-\mathfrak{a})\,dz.
    \end{align*}
    Adding and subtracting to the right side we have
    \begin{align*}
        \frac{h_{\omega\gamma,\chi_1,\chi_2,k}(\mathfrak{a})}{(-1)^k\tau(\overline{\chi_1})(k-1)}&=R_{\chi_1,\chi_2,k}(\gamma)\,\phi_{\chi_2,\chi_1,k}((\omega\gamma)^{-1},1,-\mathfrak{a})-R_{\chi_1,\chi_2,k}(\gamma)\frac{\widehat{S}_{\chi_2,\chi_1,k}(\mathfrak{a})}{(-1)^k\tau(\overline{\chi_2})(k-1)}+\frac{\widehat{S}_{\chi_1,\chi_2,k}(\mathfrak{a})}{(-1)^k\tau(\overline{\chi_1})(k-1)}.
    \end{align*}
    Multiplying through by $(-1)^k\tau(\overline{\chi_1})(k-1)$ completes the result.
\end{proof}

Now we can begin proving our reciprocity formulae coming from the Fricke involution. We begin by proving a relation on $h_{\gamma,\chi_1,\chi_2,k}$,
\begin{lemma}\label{threeTerm}
    For $\gamma=\begin{psmallmatrix}
        a & b \\ cq_1q_2 & d
    \end{psmallmatrix}\in\Gamma_0(q_1q_2)$ and $\gamma'=\begin{psmallmatrix}
        d & -c \\ -bq_1q_2 & a
    \end{psmallmatrix}\in\Gamma_0(q_1q_2)$ we have that
    \begin{equation}\label{equThreeTerm}0=h_{\gamma,\chi_1,\chi_2,k}|_{2-k}\omega-h_{\gamma',\chi_1,\chi_2,k}+(h_{\omega,\chi_1,\chi_2,k}-h_{\omega,\chi_1,\chi_2,k}|_{2-k}\gamma').\end{equation}
    for all $\mathfrak{a}\in((\Gamma_0(q_1q_2))(\infty)\cup(\omega\Gamma_0(q_1q_2))(\infty))\setminus\{\infty\}$.
\end{lemma}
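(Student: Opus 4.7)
The plan is to reduce this identity to a single application of the crossed homomorphism of Lemma \ref{qmfCrossHom} (extended to the enlarged domain) after identifying a conjugation relation between $\gamma$, $\gamma'$ and $\omega$. The first step is to observe, via direct matrix multiplication using $ad - bcq_1q_2 = 1$, that
$$\omega \gamma' = \begin{pmatrix} 0 & -1/\sqrt{q_1q_2} \\ \sqrt{q_1q_2} & 0 \end{pmatrix}\begin{pmatrix} d & -c \\ -bq_1q_2 & a \end{pmatrix} = \begin{pmatrix} b\sqrt{q_1q_2} & -a/\sqrt{q_1q_2} \\ d\sqrt{q_1q_2} & -c\sqrt{q_1q_2} \end{pmatrix} = \begin{pmatrix} a & b \\ cq_1q_2 & d \end{pmatrix}\begin{pmatrix} 0 & -1/\sqrt{q_1q_2} \\ \sqrt{q_1q_2} & 0 \end{pmatrix} = \gamma \omega.$$
This is the key algebraic input coming from the particular form of $\gamma'$ defined in the statement.

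Next, I would note that the proof of the crossed homomorphism Lemma \ref{qmfCrossHom} only relies on the definition $h_{\gamma,\chi_1,\chi_2,k} = \widehat{S}_{\chi_1,\chi_2,k} - \widehat{S}_{\chi_1,\chi_2,k}|_{2-k}\gamma$ together with the standard slash-operator composition identity $f|_{2-k}\gamma_1|_{2-k}\gamma_2 = f|_{2-k}\gamma_1\gamma_2$, neither of which requires $\gamma_1,\gamma_2 \in \Gamma_0(q_1q_2)$. Since $\widehat{S}_{\chi_1,\chi_2,k}$, $h_{\gamma,\chi_1,\chi_2,k}$, and $h_{\omega\gamma,\chi_1,\chi_2,k}$ have been extended to $(\Gamma_0(q_1q_2))(\infty)\cup(\omega\Gamma_0(q_1q_2))(\infty)$ in a way compatible with the slash action of $\omega$, Lemma \ref{qmfCrossHom} applies verbatim to both $\omega\gamma'$ and $\gamma\omega$, giving
$$h_{\omega\gamma',\chi_1,\chi_2,k} = h_{\omega,\chi_1,\chi_2,k}|_{2-k}\gamma' + h_{\gamma',\chi_1,\chi_2,k} \qquad \text{and} \qquad h_{\gamma\omega,\chi_1,\chi_2,k} = h_{\gamma,\chi_1,\chi_2,k}|_{2-k}\omega + h_{\omega,\chi_1,\chi_2,k}.$$

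Finally, since $\omega\gamma' = \gamma\omega$, the left-hand sides of these two identities agree, and equating the right-hand sides and rearranging yields exactly (\ref{equThreeTerm}). The main (minor) obstacle is really just the domain bookkeeping in the second step: one must verify that every slashed expression in the derivation lands in $((\Gamma_0(q_1q_2))(\infty)\cup(\omega\Gamma_0(q_1q_2))(\infty))\setminus\{\infty\}$ so that the extended definitions apply. Since $\gamma,\gamma' \in \Gamma_0(q_1q_2)$ preserve $(\Gamma_0(q_1q_2))(\infty)$ and $\omega$ interchanges the two pieces of the extended cusp set, this is immediate. Everything else is formal manipulation.
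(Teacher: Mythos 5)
Your proposal is correct and follows essentially the same route as the paper: both hinge on the matrix identity $\gamma\omega=\omega\gamma'$ and then expand $h_{\gamma\omega,\chi_1,\chi_2,k}=h_{\omega\gamma',\chi_1,\chi_2,k}$ via the crossed homomorphism relation (the paper re-derives the cocycle identity from the definition of $h$ on the extended domain, which is exactly your observation that Lemma \ref{qmfCrossHom} extends verbatim). Your explicit verification of $\omega\gamma'=\gamma\omega$ and the domain bookkeeping are fine additions but not a different argument.
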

\begin{proof}
    Since $\gamma\omega=\omega\gamma'$, we have that
    $h_{\gamma\omega,\chi_1,\chi_2,k}=h_{\omega\gamma',\chi_1,\chi_2,k}$. Note that
    \begin{align*}
        h_{\gamma\omega,\chi_1,\chi_2,k}&=\widehat{S}_{\chi_1,\chi_2,k}-\widehat{S}_{\chi_1,\chi_2,k}|_{2-k}\gamma\omega \\
        &=\left(\widehat{S}_{\chi_1,\chi_2,k}-\widehat{S}_{\chi_1,\chi_2,k}|_{2-k}\omega\right)+\left(\widehat{S}_{\chi_1,\chi_2,k}-\widehat{S}_{\chi_1,\chi_2,k}|_{2-k}\gamma\right)|_{2-k}\omega=h_{\omega,\chi_1,\chi_2,k}+h_{\gamma,\chi_1,\chi_2,k}|_{2-k}\omega.
    \end{align*}
    Likewise,
    \begin{align*}
        h_{\omega\gamma',\chi_1,\chi_2,k}&=\widehat{S}_{\chi_1,\chi_2,k}-\widehat{S}_{\chi_1,\chi_2,k}|_{2-k}\omega\gamma' \\
        &=\left(\widehat{S}_{\chi_1,\chi_2,k}-\widehat{S}_{\chi_1,\chi_2,k}|_{2-k}\gamma'\right)+\left(\widehat{S}_{\chi_1,\chi_2,k}-\widehat{S}_{\chi_1,\chi_2,k}|_{2-k}\omega\right)|_{2-k}\gamma'=h_{\gamma',\chi_1,\chi_2,k}+h_{\omega,\chi_1,\chi_2,k}|_{2-k}\gamma'.
    \end{align*}
    Since $h_{\gamma\omega,\chi_1,\chi_2,k}=h_{\omega\gamma',\chi_1,\chi_2,k}$, the result follows.
\end{proof}

\noindent Now we prove Theorem \ref{reciprocity} by putting together Lemmas \ref{qmfLemma}, \ref{qmfLemma2}, and \ref{threeTerm}.

\begin{proof}
    Substituting (\ref{equqmfLemma}) from Lemma \ref{qmfLemma} and (\ref{equqmfLemma2}) from Lemma \ref{qmfLemma2} into (\ref{equThreeTerm}) from Lemma \ref{threeTerm} we have
    \begin{align*}
        0&=(-1)^k\tau(\overline{\chi_1})(k-1)\,\psi(\gamma)\,\phi_{\chi_1,\chi_2,k}|_{2-k}\omega(\gamma^{-1},1,-\mathfrak{a})+(1-\psi(\gamma))\,\widehat{S}_{\chi_1,\chi_2,k}|_{2-k}\omega(\mathfrak{a}) \\
        &\qquad-(-1)^k\tau(\overline{\chi_1})(k-1)\,\psi(\gamma')\,\phi_{\chi_1,\chi_2,k}(\gamma'^{-1},1,-\mathfrak{a})-(1-\psi(\gamma'))\,\widehat{S}_{\chi_1,\chi_2,k}(\mathfrak{a}) \\
        &+(-1)^k\tau(\overline{\chi_1})(k-1)\,R_{\chi_1,\chi_2,k}\,\phi_{\chi_2,\chi_1,k}(\omega^{-1},1,-\mathfrak{a}) \\
        &\qquad-R_{\chi_1,\chi_2,k}\cdot(\tau(\overline{\chi_1})/\tau(\overline{\chi_2}))\,\widehat{S}_{\chi_2,\chi_1,k}(\mathfrak{a})+\widehat{S}_{\chi_1,\chi_2,k}(\mathfrak{a}) \\
        &-(-1)^k\tau(\overline{\chi_1})(k-1)\,R_{\chi_1,\chi_2,k}\,\phi_{\chi_2,\chi_1,k}|_{2-k}\gamma'(\omega^{-1},1,-\mathfrak{a}) \\
        &\qquad+R_{\chi_1,\chi_2,k}\cdot\,(\tau(\overline{\chi_1})/\tau(\overline{\chi_2}))\,\widehat{S}_{\chi_2,\chi_1,k}|_{2-k}\gamma'(\mathfrak{a})-\widehat{S}_{\chi_1,\chi_2,k}|_{2-k}\gamma'(\mathfrak{a}).
    \end{align*}
    Rearranging this gives
    \begin{align*}
        &\widehat{S}_{\chi_1,\chi_2,k}|_{2-k}\gamma'(\mathfrak{a})-\psi(\gamma')\,\widehat{S}_{\chi_1,\chi_2,k}(\mathfrak{a}) \\
        &\qquad+(-1)^k\tau(\overline{\chi_1})(k-1)\left(\psi(\gamma')\,\phi_{\chi_1,\chi_2,k}(\gamma'^{-1},1,-\mathfrak{a})-\psi(\gamma)\,\phi_{\chi_1,\chi_2,k}|_{2-k}\omega(\gamma^{-1},1,-\mathfrak{a})\right) \\
        &=R_{\chi_1,\chi_2,k}\cdot(\tau(\overline{\chi_1})/\tau(\overline{\chi_2}))\left(\widehat{S}_{\chi_2,\chi_1,k}|_{2-k}\gamma'(\mathfrak{a})-\widehat{S}_{\chi_2,\chi_1,k}(\mathfrak{a})\right) \\
        &\qquad+(-1)^k\tau(\overline{\chi_1})(k-1)\,R_{\chi_1,\chi_2,k}\cdot\left(\phi_{\chi_2,\chi_1,k}(\omega^{-1},1,-\mathfrak{a})-\phi_{\chi_2,\chi_1,k}|_{2-k}\gamma'(\omega^{-1},1,-\mathfrak{a})\right) \\
        &\qquad+(1-\psi(\gamma))\,\widehat{S}_{\chi_1,\chi_2,k}|_{2-k}\omega(\mathfrak{a}).
    \end{align*}
    as desired. 
\end{proof}
\noindent Despite the fact that the above holds for general $k$ over $\mathfrak{a}\in((\Gamma_0(q_1q_2))(\infty)\cup(\omega\Gamma_0(q_1q_2))(\infty))\setminus\{\infty\}$, it turns out the above holds for all $\mathfrak{a}\in(\Gamma_0(q_1q_2))(\infty)\cup(\omega\Gamma_0(q_1q_2))(\infty)$ in the case that $k=2$. Recalling Definition \ref{shatDef} and noting that $\widehat{S}_{\chi_1,\chi_2,k}|_{2-k}\gamma(\mathfrak{a})=\widehat{S}_{\chi_1,\chi_2,k}(\gamma\mathfrak{a})$ when $k=2$, it follows that we can extend $h_{\gamma,\chi_1,\chi_2,2}$ and $h_{\omega\gamma,\chi_1,\chi_2,2}$ to all of $(\Gamma_0(q_1q_2))(\infty)\cup(\omega\Gamma_0(q_1q_2))(\infty)$. So, when $k=2$, this extension allows us to recover the above identity over all $\mathfrak{a}\in(\Gamma_0(q_1q_2))(\infty)\cup(\omega\Gamma_0(q_1q_2))(\infty)$. Using this fact and specializing $k=2$ and $\mathfrak{a}=\infty$ allows us to recover a reciprocity identity derived in \cite{SVY} with a new method of proof.
\begin{corollary}[\cite{SVY}, Theorem $1.3$]\label{SVYFrickeRecovery}
    For $\gamma=\begin{psmallmatrix}
        a & b \\ cq_1q_2 & d
    \end{psmallmatrix}\in\Gamma_0(q_1q_2)$ and $\gamma'=\begin{psmallmatrix}
        d & -c \\ -bq_1q_2 & a
    \end{psmallmatrix}\in\Gamma_0(q_1q_2)$ we have
    $$S_{\chi_1,\chi_2,2}(\gamma)=\chi_1(-1)\,S_{\chi_2,\chi_1,2}(\gamma')+(1-\psi(\gamma))\left(\frac{\tau(\overline{\chi_1})}{\pi i}\right)\,L(1,\chi_1)\,L(0,\overline{\chi_2}).$$
\end{corollary}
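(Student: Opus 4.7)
The plan is to specialize Theorem \ref{reciprocity} to $k=2$ and $\mathfrak{a}=\infty$. As noted in the paragraph preceding the corollary, when $k=2$ the slash operator $f|_{2-k}\gamma(\mathfrak{a}) = f(\gamma\mathfrak{a})$ is simply the Möbius action, and $P_{k-2}(z;X,Y) = 1$ does not depend on $X,Y$. Consequently, $h_{\gamma,\chi_1,\chi_2,2}$ and $h_{\omega\gamma,\chi_1,\chi_2,2}$ extend continuously to all of $(\Gamma_0(q_1q_2))(\infty)\cup(\omega\Gamma_0(q_1q_2))(\infty)$, so Theorem \ref{reciprocity} holds there and in particular at $\mathfrak{a}=\infty$. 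Furthermore, by Definition \ref{Sdef} we have the relation $\phi_{\chi_1,\chi_2,2}(\eta,1,-Y) = S_{\chi_1,\chi_2,2}(\eta)/\tau(\overline{\chi_1})$ independently of $Y$, which is the key simplification that will collapse the $\phi$-terms.

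Evaluating each term on the LHS of Theorem \ref{reciprocity} at $k=2$, $\mathfrak{a}=\infty$: the first pair becomes $\widehat{S}_{\chi_1,\chi_2,2}(\gamma'\infty) - \psi(\gamma')\widehat{S}_{\chi_1,\chi_2,2}(\infty) = S_{\chi_1,\chi_2,2}(\gamma')$ by the remark after Definition \ref{shatDef}. The bracketed $\phi$-expression becomes $\psi(\gamma')S_{\chi_1,\chi_2,2}(\gamma'^{-1})/\tau(\overline{\chi_1}) - \psi(\gamma)S_{\chi_1,\chi_2,2}(\gamma^{-1})/\tau(\overline{\chi_1})$, where I have evaluated the slashed $\phi$ using $\omega\infty=0$ and then invoked independence of the last argument. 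The crossed homomorphism of Corollary \ref{crossHomSpecial} applied to $\gamma\gamma^{-1}=I$ gives $\psi(\eta)S(\eta^{-1}) = -S(\eta)$, so the bracket simplifies to $(S_{\chi_1,\chi_2,2}(\gamma) - S_{\chi_1,\chi_2,2}(\gamma'))/\tau(\overline{\chi_1})$. Multiplying by the prefactor $\tau(\overline{\chi_1})$ and adding the first pair yields LHS $= S_{\chi_1,\chi_2,2}(\gamma)$.

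On the RHS, the middle line involves $\phi_{\chi_2,\chi_1,2}(\omega^{-1},1,-\infty) - \phi_{\chi_2,\chi_1,2}(\omega^{-1},1,-\gamma'\infty)$, which vanishes since $\phi_{\chi_2,\chi_1,2}$ is independent of its last argument. The final line becomes $(1-\psi(\gamma))\widehat{S}_{\chi_1,\chi_2,2}(0)$, which by Lemma \ref{Sfricke} (specialized to $k=2$) equals $(1-\psi(\gamma))(\tau(\overline{\chi_1})/(\pi i))L(1,\chi_1)L(0,\overline{\chi_2})$, precisely matching the error term in the corollary. For the first line, one must simplify $R_{\chi_1,\chi_2,2}\cdot\tau(\overline{\chi_1})/\tau(\overline{\chi_2})$: unwinding $R_{\chi_1,\chi_2,2} = \chi_1(-1)(\tau(\chi_1)/\tau(\chi_2))(q_2/q_1)$ and applying the Gauss sum identity $\tau(\chi)\tau(\overline{\chi}) = \chi(-1)q$ together with the parity constraint $\chi_1\chi_2(-1)=1$ collapses this scalar to $\chi_1(-1)$. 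Thus the first RHS line contributes $\chi_1(-1)S_{\chi_2,\chi_1,2}(\gamma')$, completing the derivation.

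The main obstacle is really just bookkeeping: keeping track of which arguments the various $\phi$ and $\widehat{S}$ terms are evaluated at, and correctly simplifying the Gauss-sum ratio $R_{\chi_1,\chi_2,2}\cdot\tau(\overline{\chi_1})/\tau(\overline{\chi_2})$. No genuinely new ideas are needed beyond those used to prove Theorem \ref{reciprocity} and Lemma \ref{Sfricke}; the corollary is essentially the $k=2$, $\mathfrak{a}=\infty$ boundary case of the general reciprocity, made accessible only because in weight two the polynomial factor trivializes.
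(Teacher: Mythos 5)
Your proposal is correct and follows essentially the same route as the paper's own proof: specialize Theorem \ref{reciprocity} to $k=2$ and $\mathfrak{a}=\infty$ (justified by the weight-$2$ extension of the $h$-functions), use the $Y$-independence of $\phi_{\cdot,\cdot,2}$ to turn the $\phi$-terms into values of $S_{\chi_1,\chi_2,2}$, simplify via the crossed homomorphism of Corollary \ref{crossHomSpecial}, and finish with Lemma \ref{Sfricke} and the identity $R_{\chi_1,\chi_2,2}\cdot\tau(\overline{\chi_1})/\tau(\overline{\chi_2})=\chi_1(-1)$. The only difference is cosmetic bookkeeping (you also write out the Gauss-sum computation that the paper merely asserts), so there is nothing to fix.
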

\begin{proof}
    When $k=2$ note that $f|_{2-k}\gamma(\mathfrak{a})=f(\gamma\mathfrak{a})$. Recall Theorem \ref{reciprocity}, which for $k=2$ states:
    \begin{align*}
        &\widehat{S}_{\chi_1,\chi_2,2}(\gamma'\mathfrak{a})-\psi(\gamma')\,\widehat{S}_{\chi_1,\chi_2,2}(\mathfrak{a}) \\
        &\qquad+\tau(\overline{\chi_1})\left(\psi(\gamma')\,\phi_{\chi_1,\chi_2,2}(\gamma'^{-1},1,-\mathfrak{a})-\psi(\gamma)\,\phi_{\chi_1,\chi_2,2}(\gamma^{-1},1,-\omega\mathfrak{a})\right) \\
        &=R_{\chi_1,\chi_2,2}\cdot(\tau(\overline{\chi_1})/\tau(\overline{\chi_2}))\left(\widehat{S}_{\chi_2,\chi_1,2}(\gamma'\mathfrak{a})-\widehat{S}_{\chi_2,\chi_1,2}(\mathfrak{a})\right) \\
        &\qquad+\tau(\overline{\chi_1})\,R_{\chi_1,\chi_2,2}\cdot\left(\phi_{\chi_2,\chi_1,2}(\omega^{-1},1,-\mathfrak{a})-\phi_{\chi_2,\chi_1,2}(\omega^{-1},1,-\gamma'\mathfrak{a})\right) \\
        &\qquad+(1-\psi(\gamma))\,\widehat{S}_{\chi_1,\chi_2,2}(\mathfrak{\omega a}).
    \end{align*}
    Since $\phi_{\chi_1,\chi_2,2}$ and $\phi_{\chi_2,\chi_1,2}$ are independent in $\mathfrak{a}$ we have
    $$\phi_{\chi_2,\chi_1,2}(\omega^{-1},1,-\mathfrak{a})-\phi_{\chi_2,\chi_1,2}(\omega^{-1},1,-\gamma'\mathfrak{a})=0,$$
    $\phi_{\chi_1,\chi_2,2}(\gamma'^{-1},1,-\mathfrak{a})=\phi_{\chi_1,\chi_2,2}(\gamma'^{-1},1,-\gamma'^{-1}\mathfrak{a})$, and $\phi_{\chi_1,\chi_2,2}(\gamma^{-1},1,-\omega\mathfrak{a})=\phi_{\chi_1,\chi_2,2}(\gamma^{-1},1,-\gamma^{-1}\mathfrak{a})$. Substituting these three results into the above restatement of Theorem \ref{reciprocity} we have that
    \begin{align*}
        &\widehat{S}_{\chi_1,\chi_2,2}(\gamma'\mathfrak{a})-\psi(\gamma')\,\widehat{S}_{\chi_1,\chi_2,2}(\mathfrak{a})+\tau(\overline{\chi_1})\left(\psi(\gamma')\,\phi_{\chi_1,\chi_2,2}(\gamma'^{-1},1,-\gamma'^{-1}\mathfrak{a})-\psi(\gamma)\,\phi_{\chi_1,\chi_2,2}(\gamma^{-1},1,-\gamma^{-1}\mathfrak{a})\right) \\
        &\qquad=R_{\chi_1,\chi_2,2}\cdot(\tau(\overline{\chi_1})/\tau(\overline{\chi_2}))\left(\widehat{S}_{\chi_2,\chi_1,2}(\gamma'\mathfrak{a})-\widehat{S}_{\chi_2,\chi_1,2}(\mathfrak{a})\right)+(1-\psi(\gamma))\,\widehat{S}_{\chi_1,\chi_2,2}(\omega\mathfrak{a}).
    \end{align*}
    Now specializing $\mathfrak{a}=\infty$ and recalling $\widehat{S}_{\chi_1,\chi_2,2}(\infty)=0$, $\widehat{S}_{\chi_1,\chi_2,2}(\gamma\infty)=S_{\chi_1,\chi_2,2}(\gamma)$, and Definition \ref{Sdef} one gets
    \begin{align*}
        &S_{\chi_1,\chi_2,2}(\gamma')+\psi(\gamma')\,S_{\chi_1,\chi_2,2}(\gamma'^{-1})-\psi(\gamma)\,S_{\chi_1,\chi_2,2}(\gamma^{-1}) \\
        &\qquad=R_{\chi_1,\chi_2,2}\cdot(\tau(\overline{\chi_1})/\tau(\overline{\chi_2}))\,S_{\chi_2,\chi_1,2}(\gamma')+(1-\psi(\gamma))\,\widehat{S}_{\chi_1,\chi_2,2}(0).
    \end{align*}
    Now note that $S_{\chi_1,\chi_2,2}(\gamma')+\psi(\gamma')\,S_{\chi_1,\chi_2,2}(\gamma'^{-1})=S_{\chi_1,\chi_2,2}\begin{psmallmatrix}
        1 & 0 \\ 0 & 1
    \end{psmallmatrix}=0$ by Corollary \ref{crossHomSpecial}.
    Similarly,
    $$-\psi(\gamma)\,S_{\chi_1,\chi_2,2}(\gamma^{-1})=S_{\chi_1,\chi_2,2}(\gamma)-(S_{\chi_1,\chi_2,2}(\gamma)+\psi(\gamma)\,S_{\chi_1,\chi_2,2}(\gamma^{-1}))=S_{\chi_1,\chi_2,2}(\gamma)-S_{\chi_1,\chi_2,2}\begin{psmallmatrix}
        1 & 0 \\ 0 & 1
    \end{psmallmatrix}=S_{\chi_1,\chi_2,2}(\gamma)$$
    by Corollary \ref{crossHomSpecial}. So we have that
    $$S_{\chi_1,\chi_2,2}(\gamma)=R_{\chi_1,\chi_2,2}\cdot(\tau(\overline{\chi_1})/\tau(\overline{\chi_2}))\,S_{\chi_2,\chi_1,2}(\gamma')+(1-\psi(\gamma))\,\widehat{S}_{\chi_1,\chi_2,2}(0).$$
    Noting that $R_{\chi_1,\chi_2,2}\cdot(\tau(\overline{\chi_1})/\tau(\overline{\chi_2}))=\chi_1(-1)$, by substituting Lemma \ref{Sfricke} we get the desired result.
\end{proof}

\section{Code}\label{codeSec}
    The work described in this paper has been implemented using \code{SageMath.}\ The reader can find this code at:
    
    \url{https://github.com/prestontranbarger/NFDSFastComputation}

    \noindent Once the repository is cloned, the reader can navigate to Section $3$ of \code{README.md}\ to gain access to necessary information within the repository which is relevant to the contents of this paper.
    
\section*{Acknowledgements}
    The author would like to thank Dr. Matthew Young for his extensive support and guidance throughout this research. This material is based, in part, upon work supported by the National Science Foundation Graduate Research Fellowship under Grant No. 2233066. Any opinion, findings, and conclusions or recommendations expressed in this material are those of the author and do not necessarily reflect the views of the National Science Foundation.

\bibliographystyle{alpha}
\bibliography{bib}

\end{document}